\documentclass[a4paper,fleqn]{cas-sc}

\usepackage[authoryear,longnamesfirst]{natbib}
\usepackage{algorithm}
\usepackage{algpseudocode}
\usepackage{placeins}

\usepackage[utf8]{inputenc}
\usepackage{tikz}
\usetikzlibrary{positioning}
\def\tsc#1{\csdef{#1}{\textsc{\lowercase{#1}}\xspace}}
\tsc{WGM}
\tsc{QE}

\newtheorem{expl}{Example}[section]

\newtheorem{theorem}{Theorem}[section]
\newtheorem{lemma}[theorem]{Lemma}
\newtheorem{corollary}[theorem]{Corollary}

\newtheorem{rem}[theorem]{Remark}

 \numberwithin{figure}{section}

\DeclareMathOperator{\Tr}{Tr}

\begin{document}
\let\WriteBookmarks\relax
\def\floatpagepagefraction{1}
\def\textpagefraction{.001}

\shorttitle{Structure-preserving full- and low-rank exponential 
methods for the optimal control of Lindblad equations}    

\shortauthors{H. Chen et~al.}  

\title [mode = title]{Structure-preserving full- and low-rank exponential 
methods for the optimal control of Lindblad equations}



%

\author[1]{Hao Chen}[orcid=0000-0001-9976-6732]

\cormark[1]


\ead{hch@cqnu.edu.cn}


\credit{Conceptualization, Investigation, Methodology, Project administration, Software, Validation, Writing-original draft, Writing-review \& editing}

\affiliation[1]{organization={College of Mathematics Science, Chongqing Normal University},
            addressline={University Town}, 
            city={Chongqing},
            postcode={401331}, 
            country={China}}

\author[2]{Alfio Borz\`{i}}[orcid=0000-0002-8050-1336]


\ead{alfio.borzi@uni-wuerzburg.de}


\credit{Conceptualization, Investigation, Methodology, Project administration, Software, Validation, Writing-original draft, Writing-review \& editing}

\affiliation[2]{organization={Institut f\"{u}r Mathematik, Universit\"{a}t W\"{u}rzburg},
            addressline={Campus Hubland Nord}, 
            city={W\"{u}rzburg},
            postcode={97074}, 
            country={Germany}}

\cortext[1]{Corresponding author}



\begin{abstract}
Optimal control of open quantum systems governed by the Lindblad master
equation requires the repeated solution of forward and adjoint evolution
equations. For large Hilbert spaces, these computations demand numerical
methods that are structure-preserving, accurate, and computationally
efficient, while remaining compatible with nonsmooth optimization
techniques. In this work, a unified framework for optimal control of
Lindblad equations is presented. The continuous optimality system is
derived from the Pontryagin maximum principle and solved by a sequential
quadratic Hamiltonian (SQH) method. Its numerical realization is based
on second-order exponential midpoint propagators for the forward and
adjoint Lindblad equations with time-dependent Hamiltonians. Full-rank
schemes preserving the Hermitian and positive-semidefinite structure of
the density matrix are developed together with low-rank formulations
that substantially reduce storage requirements and computational cost.
Rigorous error estimates are established for the full- and low-rank
forward and adjoint propagators. Numerical experiments confirm the
predicted convergence rates, demonstrate the effectiveness of the
low-rank approximations, and illustrate the performance of the proposed
FREM-SQH and LREM-SQH algorithms for optimal control problems with
smooth and nonsmooth control costs.
\end{abstract}

\begin{keywords}
 open quantum system \sep Lindblad equation \sep positivity and trace preservation \sep exponential integrator \sep low-rank approximation \sep optimal control \sep sequential quadratic Hamiltonian method \sep monotonic convergence
\end{keywords}

\maketitle

\section{Introduction}
\label{sec-introduction}

The Lindblad master equation, also referred to as the
Gorini-Kossakowski-Sudarshan-Lindblad equation
\citep{Gorini,Lindblad}, provides the standard mathematical model for
the Markovian evolution of open quantum systems
\citep{Breuer,Davies1976}. It plays a fundamental role in quantum
optics, quantum information processing, quantum technologies, and the
description of decoherence and noise in realistic quantum devices. The
exact Lindblad evolution maps density matrices to density matrices,
preserving Hermiticity, positive semidefiniteness, and unit trace.
Maintaining these structural properties at the discrete level is
essential for obtaining physically meaningful numerical simulations
\citep{Riesch}.

In many applications, the objective is to steer an open quantum system
towards a prescribed target state or quantum operation despite the
presence of dissipative effects. This leads to optimal control problems
constrained by the Lindblad equation, which have attracted increasing
attention in recent years; see, for example,
\citep{Goerz,Machnes,Wenin}. Important applications arise in NMR
spectroscopy \citep{Fouquieres2011,Herbruggen2005,Khaneja2001,TOSNER2009},
quantum chemistry
\citep{Khaneja2005,Maday2003,Reich2012,Zhu1998},
quantum information processing
\citep{Doria2011,Egger2014},
and molecular physics \citep{Palao2002}. For general accounts of
quantum optimal control and its computational aspects, we refer to
\citep{Alessandro2008,Wiseman2009,Borzi2017}.

Regardless of the particular optimization strategy, the numerical
solution of an open quantum optimal control problem requires the
repeated integration of a forward Lindblad equation together with a
backward adjoint equation associated with the Pontryagin optimality
system. Consequently, the overall efficiency of the optimization
process is largely determined by the quality of the underlying
propagators. These propagators should preserve the physical structure
of the density matrix, achieve high accuracy for time-dependent
Hamiltonians, and remain computationally efficient for large Hilbert
spaces, where the repeated propagation and storage of full density
matrices become the dominant computational cost. This challenge
becomes particularly significant in modern quantum technologies,
including quantum computing, quantum communication, quantum sensing,
and quantum simulation.

These applications require the repeated solution of optimal control problems
for systems composed of many interacting quantum subsystems. For a
register of $n_q$ identical $d$-level qudits, the Hilbert space has
dimension $m=d^{n_q}$, whereas the density matrix and the adjoint
variable each contain $m^2=d^{2n_q}$ real degrees of freedom.
Consequently, the dominant matrix operations scale as
$\mathcal{O}(m^3)=\mathcal{O}(d^{3n_q})$. For example, a register of
only ten qutrits already requires dense matrices of dimension
$59\,049\times59\,049$, occupying approximately $52\,\mathrm{GiB}$ each
in double-precision complex arithmetic. This exponential growth
severely limits the applicability of conventional full-rank optimal
control algorithms and provides a strong motivation for the
structure-preserving low-rank formulations developed in the present
work.

Structure-preserving numerical methods play a central role in the
simulation and optimal control of open quantum systems. A variety of
integrators has been developed for the Lindblad equation, including
matrix exponential methods \citep{Riesch,Riesch1}, Strang splitting
schemes \citep{Songolo}, Kraus representation methods \citep{Cao},
integrating-factor Runge-Kutta methods
\citep{Appelo1,DelMastro}, exponential Euler integrators
\citep{Chen}, quantum channel methods \citep{Robin}, and nested Picard
iterative schemes \citep{Hu}. Most of these methods are designed for
time-independent Hamiltonians or provide only first-order accuracy in
the time-dependent setting. 
Moreover, rigorous numerical analysis of structure-preserving low-rank
integrators remains very limited. To the best of our knowledge,
existing convergence theory is restricted to the first-order
exponential Euler method developed in \citep{Chen}. Corresponding
second-order error estimates and a rigorous convergence analysis for
low-rank adjoint Lindblad propagators appear to be unavailable.

The numerical solution of open quantum optimal control problems is
traditionally based on adjoint-state techniques such as GRAPE
\citep{Boutin,Schulte}, Krotov methods \citep{Krotov,Pereira}, and
second-order approaches including semismooth Newton methods
\citep{Ciaramella2015,CiaramellaSKRYN} and sequential quadratic
Hamiltonian (SQH) methods
\citep{BorziSQH,Breitenbach2020}. Most existing approaches follow 
a discretize-then-optimize strategy,
in which the continuous dynamics are first discretized and the
resulting finite-dimensional optimization problem is solved
iteratively. By contrast, the present work adopts an
optimize-then-discretize formulation based on the continuous
Pontryagin maximum principle, allowing the numerical discretization to
preserve the intrinsic matrix structure of both the state and adjoint
equations.

A natural strategy for reducing computational costs is to exploit low-rank
representations of the density matrix, in which only factors of size
$m\times r$, with $r\ll m$, are propagated and stored. 
From the viewpoint of numerical analysis, the present framework also
belongs to the comparatively small class of optimal control methods
developed directly for matrix differential equations, rather than for
their equivalent vectorized formulations. Working in matrix form
preserves the underlying algebraic structure throughout the analysis
and naturally supports structure-preserving full- and low-rank
discretizations. Although
low-rank methods for Lindblad equations have been proposed
\citep{Appelo1,Chen,DelMastro,Hu,LeBris1,LeBris2}, their integration
into adjoint-based optimal control algorithms has received little
attention. In particular, rigorous error analysis for low-rank adjoint
propagators and their systematic incorporation into a continuous
optimize-then-discretize framework appear to be unavailable.

In the present work, a unified computational framework for the optimal
control of open quantum systems governed by the Lindblad equation is
developed. First, the continuous optimality system is derived from the
Pontryagin maximum principle and solved by a sequential quadratic
Hamiltonian (SQH) method. Second, second-order
structure-preserving exponential midpoint propagators are developed
for both the forward and adjoint Lindblad equations in full-rank and
low-rank form. Furthermore, rigorous error estimates are established for
the resulting propagators, including the low-rank adjoint
approximation. The optimize-then-discretize formulation naturally
combines structure-preserving time integration, low-rank matrix
approximation, and continuous optimality conditions into a unified
framework for large-scale open quantum optimal control. 

The proposed full-rank propagators preserve the Hermitian and
positive-semidefinite structure of the density matrix, while
corresponding low-rank formulations substantially reduce storage
requirements and computational cost. Rigorous error estimates are
established for the full- and low-rank forward and adjoint propagators.
For the optimal control problem, the SQH framework is extended to
Lindblad dynamics, allowing smooth, nonsmooth and also nonconvex 
control costs to be treated within a unified setting.
The resulting algorithms combine structure-preserving time integration,
low-rank approximation, and continuous optimality conditions into a
single computational framework for open quantum optimal control.

The remainder of the paper is organized as follows.
Section~\ref{sec:control-problem} formulates the controlled Lindblad
equation and the corresponding optimal control problem, derives the
Pontryagin optimality system, and introduces the SQH method.
Section~\ref{sec-ExpInt} develops the full- and low-rank exponential
midpoint propagators for the forward and adjoint Lindblad equations.
Sections~\ref{sec-err1} and \ref{sec-err2} establish the corresponding
error analysis. Numerical experiments are presented in
Section~\ref{sec-experiments}, where the theoretical convergence
results are validated and the performance of the proposed
FREM-SQH and LREM-SQH algorithms is demonstrated. Finally,
concluding remarks are given in the last section.

\section{Controlled Lindblad dynamics and optimal control}
\label{sec:control-problem}

The Lindblad master equation, also known as the
Gorini-Kossakowski-Sudarshan-Lindblad equation
\citep{Gorini,Lindblad}, provides the standard Markovian description of
the dynamics of open quantum systems; see, for example,
\citep{Breuer,Davies1976}. Its exact evolution maps density matrices to
density matrices and preserves Hermiticity, positive semidefiniteness,
and unit trace. In the present work, the Lindblad dynamics is used as
the state equation of an optimal control problem for an open quantum
system.

Let
\[
\mathbb{H}_m
:=
\left\{
X\in\mathbb{C}^{m\times m}:X=X^\dagger
\right\}
\]
denote the real vector space of Hermitian matrices. We equip
\(\mathbb{H}_m\) with the real Hilbert-Schmidt inner product
\begin{equation}
\label{eq:HS-pairing}
\langle X,Y\rangle_{\mathbb{H}_m}
:=
\operatorname{Re}\operatorname{Tr}(X^\dagger Y)
=
\operatorname{Tr}(XY),
\qquad
X,Y\in\mathbb{H}_m,
\end{equation}
and the associated Frobenius norm 
$\|X\|_F=\langle X,X\rangle_{\mathbb{H}_m}^{1/2}$. 

The state of the open quantum system is described by a density matrix
\[
\rho(t)\in\mathcal{D}_m
:=
\left\{
\rho\in\mathbb{H}_m:
\rho\succeq0,\ 
\operatorname{Tr}(\rho)=1
\right\}.
\]

Optimal control problems for open quantum systems and their numerical
solution have been studied using, among other approaches, open-system
versions of GRAPE, Krotov-type methods, and monotonic optimization
schemes; see, e.g.,
\citep{Goerz,Krotov,Machnes,Pereira,Schulte,Wenin}.
For a general account of quantum optimal control and its computational
formulation, we refer to \citep{Borzi2017}.

We consider coherent control through the Hermitian Hamiltonian
\begin{equation}
\label{eq:controlled-Hamiltonian}
H(u(t))
=
H_0+u(t)H_1,
\end{equation}
where \(H_0,H_1\in\mathbb{H}_m\), and \(u\) is a real-valued control.

We remark that, although the presentation is restricted to a single real-valued
control, the formulation extends directly to several coherent controls,
\[
H(u(t))
=
H_0+\sum_{j=1}^{J}u_j(t)H_j,
\]
with \(u(t)\in\mathbb{R}^J\). In this case, the scalar pointwise
minimization appearing below is replaced by a finite-dimensional
minimization over the admissible control set. We retain the
single-control formulation in order to simplify the notation.

A second control mechanism consists in modifying the interaction with
the environment, for example through time-dependent or
control-dependent decay rates \(\gamma_k\); see
\citep{Pechen2006}. Such incoherent controls can also be accommodated by
the exponential midpoint discretization developed below. The
Pontryagin and SQH analysis in the present work is restricted to
coherent Hamiltonian control in order to keep the control dependence
affine and the presentation focused.

The controlled Lindblad initial-value problem is given by 
\begin{equation}
\label{LindbladEq}
\begin{aligned}
\dot{\rho}(t) = \mathcal{L}_{u(t)}(\rho(t)), \qquad 
\rho(0)=\rho_0\in\mathcal{D}_m,
\end{aligned}
\end{equation}
where $t\in(0,T)$, and 
\begin{equation}
\label{eq:controlled-Lindblad-generator}
\mathcal{L}_{v}(\rho)
=
-\mathrm{i}[H_0+vH_1,\rho]
+
\sum_{k=1}^{K}
\gamma_k
\left(
L_k\rho L_k^\dagger
-
\frac12
\left\{
L_k^\dagger L_k,\rho
\right\}
\right) ,
\end{equation}
where \(L_k\in\mathbb{C}^{m\times m}\) are the Lindblad or jump operators 
characterizing the dissipation channels, and \(\gamma_k\geq0\) are the corresponding decay rates 
for each of the $K$ channels. We use the commutators, $[A,B]:=AB-BA$ and $\{A,B\}=AB+BA$, 
and \(\mathrm{i}\) denotes the imaginary unit.

Let \(U\subset\mathbb{R}\) be a nonempty compact set of admissible
control values. The admissible control set is defined by
\begin{equation}
\label{eq:admissible-controls}
\mathcal{U}_{\mathrm{ad}}
=
\left\{
u\in L^\infty(0,T):
u(t)\in U
\text{ for almost every }t\in(0,T)
\right\}.
\end{equation}
For \(u\in\mathcal{U}_{\mathrm{ad}}\), let \(\rho_u\) denote the
solution of \eqref{LindbladEq}. We consider the optimal
control problem
\begin{equation}
\label{eq:optimal-control-problem}
\min_{u\in\mathcal{U}_{\mathrm{ad}}}
J(u),
\end{equation}
with reduced objective functional given by
\begin{equation}
\label{eq:objective-functional}
J(u) =\Theta(\rho_u(T)) + \int_0^T \ell(u(t))\,dt ,
\end{equation}
where the terminal functional $\Theta:\mathbb{H}_m\to\mathbb{R}$ 
is assumed to be continuously differentiable. The control cost $\ell:U\to\mathbb{R}$ 
is assumed to be bounded from below and lower semicontinuous, but it
needs not be differentiable or convex. This setting includes smooth
quadratic regularization as well as sparsity-promoting, and also nonconvex control costs.

A useful decomposition is as follows: 
\begin{equation}
\label{eq:control-cost-decomposition}
\ell(v)
=
\frac{\alpha}{2}|v|^2+\beta\,\varphi(v),
\end{equation}
where \(\alpha,\beta\geq0\) and \(\varphi\) may be nonsmooth or
nonconvex. Typical examples include
\[
\varphi(v)=|v|,
\qquad
\varphi(v)=|v|^p,\quad 0<p<1,
\qquad
\varphi(v)=
\begin{cases}
0,&v=0,\\
1,&v\neq0,
\end{cases}
\]
corresponding respectively to \(L^1\)-, nonconvex \(L^p\)-, and
\(L^0\)-type control costs. Sequential quadratic Hamiltonian methods
for nonsmooth and sparsity-promoting quantum control were developed in
\citep{BorziSQH,Breitenbach2020}; the present work extends this approach
to open quantum systems governed by Lindblad dynamics.

Two terminal objectives of particular interest are the observable
functional
\begin{equation}
\label{eq:observable-terminal-cost}
\Theta(\rho)
=
-\operatorname{Tr}(Q\rho),
\qquad
Q\in\mathbb{H}_m,
\end{equation}
and the quadratic tracking functional
\begin{equation}
\label{eq:tracking-terminal-cost}
\Theta(\rho)
=
\frac12\|\rho-\rho_T\|_F^2,
\qquad
\rho_T\in\mathcal{D}_m.
\end{equation}
Their derivatives with respect to the pairing \eqref{eq:HS-pairing} are given by 
$\nabla\Theta(\rho)=-Q$ and $\nabla\Theta(\rho)=\rho-\rho_T$, 
respectively. In particular, the terminal derivative associated with
quadratic tracking is Hermitian but is generally indefinite. This fact
must be taken into account in the low-rank approximation of the
adjoint variable.

The optimality conditions are derived at the continuous level before
introducing a time discretization. This optimize-then-discretize
ordering is essential for the present approach: it yields a forward
Lindblad equation, a backward adjoint equation, and a pointwise
Hamiltonian minimization that can subsequently be realized by either
full-rank or low-rank exponential midpoint propagators.

\section{Pontryagin optimality system}
\label{sec:PMP}

We now derive the first-order optimality system associated with
\eqref{eq:optimal-control-problem}. The derivation is carried out on the
real Hilbert space \(\mathbb{H}_m\), equipped with the pairing
\eqref{eq:HS-pairing}, and follows the Pontryagin framework commonly
used in quantum optimal control; see, for example,
\citep{BorziSQH,Borzi2017}. The continuous optimality system is derived
before the state and adjoint equations are discretized.

We first determine the adjoint of the controlled Lindblad generator
with respect to the real Hilbert-Schmidt pairing
\eqref{eq:HS-pairing}. 

For \(v\in U\), the controlled Lindblad generator is
\[
\mathcal{L}_v(\rho)
=
-\mathrm{i}[H_0+vH_1,\rho]
+
\sum_{k=1}^{K}
\gamma_k
\left(
L_k\rho L_k^\dagger
-
\frac12
\left\{
L_k^\dagger L_k,\rho
\right\}
\right).
\]
Its adjoint with respect to the real Hilbert-Schmidt pairing is the
operator $\mathcal{L}_v^\ast:\mathbb{H}_m\to\mathbb{H}_m$ 
defined by
\[
\langle q,\mathcal{L}_v(\rho)\rangle_{\mathbb{H}_m}
=
\langle \mathcal{L}_v^\ast(q),\rho\rangle_{\mathbb{H}_m},
\qquad
\rho,q\in\mathbb{H}_m.
\]

For the Hamiltonian contribution, using the Hermiticity of
\(H_0+vH_1\), \(\rho\), and \(q\), together with the cyclicity of the
trace, we obtain
\[
\begin{aligned}
\left\langle
q,-\mathrm{i}[H_0+vH_1,\rho]
\right\rangle_{\mathbb{H}_m}
&=
\operatorname{Tr}
\left(
q\left[-\mathrm{i}(H_0+vH_1)\rho
+\mathrm{i}\rho(H_0+vH_1)\right]
\right)
\\
&=
\operatorname{Tr}
\left(
\mathrm{i}[H_0+vH_1,q]\rho
\right).
\end{aligned}
\]
For the dissipative contribution, we have 
\[\operatorname{Tr}
\left(
qL_k\rho L_k^\dagger
\right)
=
\operatorname{Tr}
\left(
L_k^\dagger qL_k\rho
\right),
\]
and
\[
\operatorname{Tr}
\left(
q\{L_k^\dagger L_k,\rho\}
\right)
=
\operatorname{Tr}
\left(
\{L_k^\dagger L_k,q\}\rho
\right).
\]
Consequently, we obtain
\begin{equation}
\label{eq:adjoint-Lindblad-generator}
\mathcal{L}_v^\ast(q)
=
\mathrm{i}[H_0+vH_1,q]
+
\sum_{k=1}^{K}
\gamma_k
\left(
L_k^\dagger qL_k
-
\frac12
\left\{
L_k^\dagger L_k,q
\right\}
\right).
\end{equation}

The adjoint equation associated with a given control \(u\) and state
\(\rho_u\) is therefore
\begin{equation}
\label{eq:adjoint-equation-PMP}
\begin{aligned}
-\dot q(t) = \mathcal{L}_{u(t)}^\ast(q(t)),  \qquad q(T)=\nabla\Theta(\rho_u(T)).
\end{aligned}
\end{equation}
Equivalently, when written as a differential equation evolving backward
from \(T\) to \(0\), we have
\begin{equation}
\label{AdjointLindbladEq}
\dot q(t)
=
-\mathrm{i}[H_0+u(t)H_1,q(t)]
-
\sum_{k=1}^{K}
\gamma_k
\left(
L_k^\dagger q(t)L_k
-
\frac12
\left\{
L_k^\dagger L_k,q(t)
\right\}
\right).
\end{equation}

The adjoint evolution preserves Hermiticity. In general, however, it
does not preserve positive semidefiniteness. 
If the terminal condition is positive semidefinite, the backward
adjoint propagation preserves positive semidefiniteness when viewed
through the corresponding time-reversed completely positive evolution.

Having determined the adjoint operator, we now introduce the
Hamilton-Pontryagin (HP) function, which combines the running cost with the
controlled dynamics. Owing to the bilinear dependence of the Lindblad
equation on the control, the HP function is affine with respect to the
control variable, a property that will be fundamental for the SQH
iteration developed in the next section.

For \(\rho,q\in\mathbb{H}_m\) and \(v\in U\), we define the 
Hamilton Pontryagin (HP) function by
\begin{equation}
\label{eq:Pontryagin-Hamiltonian}
\mathscr{H}(\rho,q,v)
=
\ell(v)
+
\left\langle
q,\mathcal{L}_v(\rho)
\right\rangle_{\mathbb{H}_m}.
\end{equation}
Since the control enters the Hamiltonian linearly through $H(v)=H_0+vH_1$, 
we may decompose
\[
\mathcal{L}_v(\rho)
=
\mathcal{L}_0(\rho)
+
v\mathcal{B}(\rho),
\]
where $ \mathcal{B}(\rho) = -\mathrm{i}[H_1,\rho]$. It follows that
\begin{equation}
\label{eq:Pontryagin-Hamiltonian-affine}
\mathscr{H}(\rho,q,v)
=
\ell(v)
+
\left\langle
q,\mathcal{L}_0(\rho)
\right\rangle_{\mathbb{H}_m}
+
v\left\langle
q,\mathcal{B}(\rho)
\right\rangle_{\mathbb{H}_m}.
\end{equation}
The part depending on the control is therefore 
$h(\rho,q,v) = \ell(v) + v\,\zeta(\rho,q)$, where
\begin{equation}
\label{eq:switching-function}
\zeta(\rho,q)
=
\left\langle
q,-\mathrm{i}[H_1,\rho]
\right\rangle_{\mathbb{H}_m}.
\end{equation}
Using the cyclicity of the trace, this quantity can also be written as
\begin{equation}
\label{eq:switching-function-alternative}
\zeta(\rho,q)
=
\operatorname{Tr}
\left(
-\mathrm{i}[q,H_1]\rho
\right).
\end{equation}
Since \(-\mathrm{i}[H_1,\rho]\) is Hermitian whenever \(H_1\) and
\(\rho\) are Hermitian, the switching function is real-valued.

The Pontryagin optimality system is obtained by combining the state
equation, the adjoint equation, and the stationarity condition
associated with the HP function. We therefore compute
the first variation of the objective functional and derive the
corresponding first-order necessary optimality conditions. 

Let \(u\in\mathcal{U}_{\mathrm{ad}}\), let \(\rho_u\) be the
corresponding state, and consider an admissible perturbation
\(u+\varepsilon\delta u\). The linearized state
\(z\in C([0,T];\mathbb{H}_m)\) satisfies
\begin{equation}
\label{eq:linearized-state}
\begin{aligned}
\dot z(t)
=
\mathcal{L}_{u(t)}(z(t))
+
\delta u(t)\mathcal{B}(\rho_u(t)),
\qquad
z(0)= 0.
\end{aligned}
\end{equation}
Assume temporarily that \(\ell\) is differentiable. Then
\[
J'(u)\delta u
=
\left\langle
\nabla\Theta(\rho_u(T)),z(T)
\right\rangle_{\mathbb{H}_m}
+
\int_0^T
\ell'(u(t))\delta u(t)\,dt.
\]
Let \(q\) solve \eqref{eq:adjoint-equation-PMP}. By differentiating
\(\langle q(t),z(t)\rangle_{\mathbb{H}_m}\), using the state,
linearized, and adjoint equations, we obtain
\[
\left\langle q(T),z(T)\right\rangle_{\mathbb{H}_m}
=
\int_0^T
\delta u(t)
\left\langle
q(t),\mathcal{B}(\rho_u(t))
\right\rangle_{\mathbb{H}_m}
\,dt.
\]
Hence, we have
\begin{equation}
\label{eq:reduced-gradient}
J'(u)\delta u
=
\int_0^T
\left[
\ell'(u(t))
+
\zeta(\rho_u(t),q(t))
\right]
\delta u(t)\,dt.
\end{equation}

We see that the appropriate PMP optimality condition is 
expressed through pointwise minimization of the Hamilton Pontryagin function.
This characterization is the basis of the SQH strategy developed for
nonsmooth quantum control in
\citep{BorziSQH,Breitenbach2020} and extended below to the Lindblad
setting.

\begin{theorem}[Pontryagin maximum principle]
\label{thm:PMP}
Let \(u^\ast\in\mathcal{U}_{\mathrm{ad}}\) be an optimal control and let
\(\rho^\ast\) be the corresponding solution of
\eqref{LindbladEq}. Then there exists an adjoint variable
\(q^\ast\in C([0,T];\mathbb{H}_m)\) satisfying
\begin{equation}
\label{eq:PMP-adjoint}
\begin{aligned}
-\dot q^\ast(t) = \mathcal{L}_{u^\ast(t)}^\ast(q^\ast(t)),
\qquad
q^\ast(T) = \nabla\Theta(\rho^\ast(T)),
\end{aligned}
\end{equation}
such that
\begin{equation}
\label{eq:PMP-minimum-condition}
u^\ast(t)
\in
\operatorname*{arg\,min}_{v\in U}
\mathscr{H}
\left(
\rho^\ast(t),q^\ast(t),v
\right)
\end{equation}
for almost every \(t\in(0,T)\).

Equivalently, since the control-independent part of
\(\mathscr{H}\) does not affect the minimization,
\begin{equation}
\label{eq:PMP-reduced-minimum-condition}
u^\ast(t)
\in
\operatorname*{arg\,min}_{v\in U}
\left\{
\ell(v)
+
v\,\zeta(\rho^\ast(t),q^\ast(t))
\right\}
\end{equation}
for almost every \(t\in(0,T)\).
\end{theorem}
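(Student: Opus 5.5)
The natural route is the classical needle-variation (spike perturbation) argument. It is needed because $U$ is only compact and $\ell$ is merely lower semicontinuous, so the directional-derivative computation leading to \eqref{eq:reduced-gradient} cannot be used directly. First, define $q^\ast$ as the unique solution of the linear backward equation \eqref{eq:PMP-adjoint}. The terminal datum is $\nabla\Theta(\rho^\ast(T))\in\mathbb{H}_m$ and the coefficient $u^\ast\in L^\infty$, so the Carath\'eodory theory gives $q^\ast\in C([0,T];\mathbb{H}_m)$. Hermiticity is preserved because $\mathcal{L}_v^\ast$ maps $\mathbb{H}_m$ into itself, by \eqref{eq:adjoint-Lindblad-generator}. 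It then remains to prove the pointwise minimum condition; the equivalence with \eqref{eq:PMP-reduced-minimum-condition} is immediate from the affine decomposition \eqref{eq:Pontryagin-Hamiltonian-affine}.

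For the needle variation, fix $v\in U$ and a point $\tau\in(0,T)$ that is simultaneously a Lebesgue point of $t\mapsto \ell(u^\ast(t))$ and of $t\mapsto \mathcal{L}_{u^\ast(t)}(\rho^\ast(t))$. Since $\ell$ is lower semicontinuous and bounded below on the compact set $U$, $\ell\circ u^\ast$ is bounded and measurable, so almost every $\tau$ qualifies. For small $\varepsilon>0$ set $u_\varepsilon=v$ on $(\tau-\varepsilon,\tau)$ and $u_\varepsilon=u^\ast$ elsewhere; this control is admissible. Let $\rho_\varepsilon$ be the corresponding state. The step I expect to be the main technical point is the first-order expansion
\[
\rho_\varepsilon(T)=\rho^\ast(T)+\varepsilon\,\Phi(T,\tau)\bigl[\mathcal{L}_v(\rho^\ast(\tau))-\mathcal{L}_{u^\ast(\tau)}(\rho^\ast(\tau))\bigr]+o(\varepsilon),
\]
where $\Phi(t,s)$ denotes the propagator of the linear equation $\dot z=\mathcal{L}_{u^\ast(t)}(z)$. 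I would prove it in two stages. On the interval $(\tau-\varepsilon,\tau)$, integrate both equations and use Gronwall together with uniform bounds on $\|\mathcal{L}_w\|$ for $w\in U$ (compactness of $U$ is used here). This gives $\rho_\varepsilon(\tau)-\rho^\ast(\tau)=\varepsilon[\mathcal{L}_v(\rho^\ast(\tau))-\mathcal{L}_{u^\ast(\tau)}(\rho^\ast(\tau))]+o(\varepsilon)$, where the Lebesgue-point property of $\tau$ controls the $u^\ast$-part. On $(\tau,T)$ both trajectories obey the same linear equation, so the difference is propagated exactly by $\Phi(T,\tau)$.

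Next, use the differentiability of $\Theta$ at $\rho^\ast(T)$ and the duality $\langle q^\ast(T),\Phi(T,\tau)w\rangle=\langle q^\ast(\tau),w\rangle$. This duality follows by differentiating $\langle q^\ast(t),\Phi(t,\tau)w\rangle$, exactly as in the computation preceding \eqref{eq:reduced-gradient}. Combining it with the expansion yields
\[
0\le \frac{J(u_\varepsilon)-J(u^\ast)}{\varepsilon}\longrightarrow \ell(v)-\ell(u^\ast(\tau))+\bigl\langle q^\ast(\tau),\mathcal{L}_v(\rho^\ast(\tau))-\mathcal{L}_{u^\ast(\tau)}(\rho^\ast(\tau))\bigr\rangle_{\mathbb{H}_m}.
\]
The running-cost increment $\varepsilon^{-1}\int_{\tau-\varepsilon}^{\tau}[\ell(v)-\ell(u^\ast(t))]\,dt$ converges by the Lebesgue-point choice. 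The limit is exactly $\mathscr{H}(\rho^\ast(\tau),q^\ast(\tau),v)-\mathscr{H}(\rho^\ast(\tau),q^\ast(\tau),u^\ast(\tau))\ge0$.

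Finally, the exceptional null set must not depend on $v$. To handle this, take a countable dense subset $\{v_j\}$ of $U$ and a common full-measure set of admissible $\tau$. For a general $v\in U$, write $\mathscr{H}(\rho,q,v)=\ell(v)+v\,\zeta+\text{const}$ and approximate $v$ by $v_j\to v$ with $\ell(v_j)\to\ell(v)$. Such a sequence does not exist for an arbitrary dense set when $\ell$ is merely lower semicontinuous. I would therefore choose the countable set dense in the graph of $\ell$, i.e.\ dense in $\{(v,\ell(v)):v\in U\}\subset\mathbb{R}^2$, which is separable; points of this set are still elements of $U$. The continuity of $v\mapsto v\zeta$ then passes the inequality to every $v\in U$, and this gives \eqref{eq:PMP-minimum-condition} for almost every $t$.
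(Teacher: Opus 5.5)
Your needle-variation argument is correct, and it takes a genuinely different, more self-contained route than the paper. The paper performs no variation at all. It identifies $\mathbb{H}_m$ with $\mathbb{R}^{m^2}$, notes that the vector field is linear in the state and affine in the control, reads off the adjoint equation from the computed $\mathcal{L}_v^\ast$, and then invokes the classical Pontryagin principle ``under the hypotheses appropriate to the admissible control class and the running cost''.

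You instead prove the minimum condition directly, using linearity twice. After the needle, the perturbation is transported exactly by your propagator $\Phi(T,\tau)$. The duality $\langle q^\ast(T),\Phi(T,\tau)w\rangle_{\mathbb{H}_m}=\langle q^\ast(\tau),w\rangle_{\mathbb{H}_m}$ then turns the terminal variation into the Hamiltonian difference at $\tau$. The paper's route is shorter, but it does not check whether the cited version covers a running cost that is only lower semicontinuous in the control; many standard statements assume continuity. Your argument uses only Lebesgue points of $\ell\circ u^\ast$, so it does cover the $L^0$-type cost and the discontinuous cost of Example~2.

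There are two small corrections.

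First, the claim that $\ell\circ u^\ast$ is bounded is false in general. A real-valued lower semicontinuous function on a compact set is bounded below but need not be bounded above (e.g.\ $\ell(0)=0$, $\ell(v)=1/v$ on $(0,1]$). What you actually need is $\ell\circ u^\ast\in L^1(0,T)$, and this follows from optimality: $J(u^\ast)\le J(v_0)<\infty$ for any constant control $v_0\in U$, and $\ell$ is bounded below. Measurability holds because $\ell$ is Borel.

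Second, your final paragraph is redundant. The full-measure set of admissible $\tau$ is defined by Lebesgue points of $\ell\circ u^\ast$ and of $t\mapsto\mathcal{L}_{u^\ast(t)}(\rho^\ast(t))$, neither of which involves $v$. The $v$-part of the needle is controlled by continuity of $s\mapsto\mathcal{L}_v(\rho_\varepsilon(s))$. Hence the inequality already holds for every $v\in U$ at every such $\tau$. Choosing a countable set dense in the graph of $\ell$ is a valid device, but it would only be needed if the generator depended merely measurably on $t$.
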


\begin{proof}
The controlled Lindblad equation is a finite-dimensional control system
on the real vector space \(\mathbb{H}_m\), which may be identified with
\(\mathbb{R}^{m^2}\). Its vector field is linear in the state and affine
in the control. The adjoint equation follows from
\eqref{eq:adjoint-Lindblad-generator}, while the terminal condition is
determined by the derivative of the terminal functional.

Under the hypotheses of the Pontryagin principle appropriate to the
admissible control class and the running cost \(\ell\), an optimal
control satisfies the pointwise Hamiltonian condition
\eqref{eq:PMP-minimum-condition}. The compactness of \(U\) and the lower
semicontinuity of \(\ell\) guarantee that the pointwise minimization
problem admits a solution.
\end{proof}

\begin{rem}
\label{rem:maximum-minimum-convention}
The terminology ``maximum principle'' is retained in accordance with
the classical literature. Since the problem is formulated as a
minimization problem and the Hamiltonian is defined by
\eqref{eq:Pontryagin-Hamiltonian}, the corresponding pointwise
condition takes the minimum form
\eqref{eq:PMP-minimum-condition}. An alternative sign convention for
the Hamiltonian would lead to an equivalent maximum condition.
\end{rem}

\begin{rem}
\label{rem:smooth-PMP-condition}
If \(\ell\) is differentiable and \(u^\ast(t)\) belongs to the interior
of \(U\), then \eqref{eq:PMP-reduced-minimum-condition} implies
\begin{equation}
\label{eq:smooth-PMP-stationarity}
\ell'(u^\ast(t))
+
\zeta(\rho^\ast(t),q^\ast(t))
=
0
\end{equation}
for almost every \(t\in(0,T)\). For example, if $\ell(v)=\frac{\alpha}{2}v^2$, 
$\alpha>0$,  then
\[
u^\ast(t)
=
\operatorname{proj}_U
\left(
-\frac{1}{\alpha}
\zeta(\rho^\ast(t),q^\ast(t))
\right).
\]
For nonsmooth costs, the minimization condition
\eqref{eq:PMP-reduced-minimum-condition} remains meaningful even when
no derivative of \(\ell\) exists.
\end{rem}

\section{Sequential quadratic Hamiltonian method}
\label{sec:SQH}

The sequential quadratic Hamiltonian (SQH) method
\citep{BorziSQH,Breitenbach2020}
belongs to the class of successive approximation methods derived from
the Pontryagin maximum principle. These methods generate a sequence of
controls by repeated pointwise minimization of the Pontryagin
Hamiltonian while keeping the state and adjoint variables fixed at the
current iterate. Their computational simplicity makes them attractive
for large-scale optimal control problems, since each update reduces to
a family of independent low-dimensional minimization problems.

However, classical successive approximation methods may exhibit poor
robustness because unrestricted pointwise Hamiltonian minimization can
produce excessively large changes of the control and therefore may fail
to decrease the objective functional. The SQH method overcomes this
difficulty by introducing a quadratic stabilization term that penalizes
the deviation from the current control iterate
\citep{BorziSQH,Breitenbach2020}. Unlike conventional gradient-based
optimization methods, the resulting pointwise minimization remains
well-defined for smooth, nonsmooth, and even nonconvex 
control costs, making the method particularly suitable for quantum
optimal control problems with general admissible control sets. 

The SQH method is formulated entirely at the continuous level.
Consequently, the optimization algorithm is independent of the
particular numerical approximation used for the state and adjoint
equations. In the present work, these equations are discretized only
after the continuous control update has been defined, following an
optimize-then-discretize strategy. This separation allows the same SQH
framework to be combined naturally with both the full-rank and the
low-rank exponential midpoint propagators developed in the following
sections, without modifying the optimization procedure itself.

Starting from a control iterate $u^k\in\mathcal U_{\mathrm{ad}}$, 
the SQH iteration first computes the corresponding state and adjoint
variables by solving the continuous optimality system. These variables
define the local information required for the subsequent pointwise
Hamiltonian minimization. Specifically, the state \(\rho^k\) is
obtained from
\begin{equation}
\label{eq:SQH-state}
\begin{aligned}
\dot{\rho}^k(t)= \mathcal{L}_{u^k(t)}(\rho^k(t)),
\qquad
\rho^k(0) = \rho_0.
\end{aligned}
\end{equation}
The adjoint \(q^k\) is then computed backward in time from
\begin{equation}
\label{eq:SQH-adjoint}
\begin{aligned}
-\dot q^k(t) = \mathcal{L}_{u^k(t)}^\ast(q^k(t)),
\qquad
q^k(T) = \nabla\Theta(\rho^k(T)).
\end{aligned}
\end{equation} 

The classical HP function is then augmented by a quadratic
stabilization term that penalizes deviations from the current control
iterate. This regularization preserves the local minimization
principle while preventing excessively large control updates and
providing the mechanism underlying the sufficient-decrease analysis
developed below. 

For a stabilization parameter \(\varepsilon_k>0\), we define the
quadratically augmented Hamilton Pontryagin function
\begin{equation}
\label{eq:augmented-Pontryagin-Hamiltonian}
\mathscr{H}_{\varepsilon_k}
(\rho,q,v;u^k)
=
\mathscr{H}(\rho,q,v)
+
\frac{\varepsilon_k}{2}|v-u^k|^2.
\end{equation}
Since only the terms depending on the optimization variable
\(v\) influence the pointwise minimization, all remaining
contributions to the Hamiltonian may be omitted. Consequently, the
control update reduces to the family of independent scalar
minimization problems 
\begin{equation}
\label{eq:SQH-pointwise-update}
u^{k+1}(t)
\in
\operatorname*{arg\,min}_{v\in U}
\left\{
\ell(v)
+
v\,\zeta(\rho^k(t),q^k(t))
+
\frac{\varepsilon_k}{2}|v-u^k(t)|^2
\right\}
\end{equation}
for almost every \(t\in(0,T)\).

For almost every \(t\), the admissible set \(U\) is nonempty and compact and the
objective function in
\eqref{eq:SQH-pointwise-update}
is lower semicontinuous with respect to \(v\). Therefore, the set of
minimizers is nonempty. When the running cost
\(\ell\) is nonconvex, uniqueness of the minimizer
cannot generally be expected. In this case, any measurable selection
satisfying \eqref{eq:SQH-pointwise-update} may be employed in the SQH iteration. 
By comparison with the admissible choice \(v=u^k(t)\), the update
satisfies
\begin{equation}
\label{eq:SQH-Hamiltonian-decrease-pointwise}
\mathscr{H}
(\rho^k(t),q^k(t),u^{k+1}(t))
-
\mathscr{H}
(\rho^k(t),q^k(t),u^k(t))
\leq
-\frac{\varepsilon_k}{2}
|u^{k+1}(t)-u^k(t)|^2
\end{equation}
for almost every \(t\in(0,T)\).

Notice that the minimization problem
\eqref{eq:SQH-pointwise-update}
is completely decoupled with respect to time. Consequently, once the
state and adjoint variables have been computed, the control update
consists of independent finite-dimensional optimization problems,
which can be solved analytically for many common running costs or by
inexpensive local optimization procedures otherwise.

\medskip

The decrease of the HP function must be compared with the
change of the state generated by the new control. We first establish a
continuous-dependence estimate.

\begin{lemma}
\label{lem:state-control-dependence}
Let \(u,v\in\mathcal{U}_{\mathrm{ad}}\), and let \(\rho_u,\rho_v\)
denote the corresponding solutions of
\eqref{LindbladEq}. Then there exists a constant
\(C_\rho>0\), independent of \(u\) and \(v\), such that
\begin{equation}
\label{eq:state-control-dependence}
\max_{0\leq t\leq T}
\|\rho_u(t)-\rho_v(t)\|_1
\leq
C_\rho
\|u-v\|_{L^1(0,T)},
\end{equation}
where $\|\cdot\|_1$ denotes the trace norm of a matrix. In particular, it holds
\begin{equation}
\label{eq:state-control-dependence-L2}
\max_{0\leq t\leq T}
\|\rho_u(t)-\rho_v(t)\|_1
\leq
C_\rho\sqrt{T}
\|u-v\|_{L^2(0,T)}.
\end{equation}
\end{lemma}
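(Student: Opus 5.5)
The plan is to subtract the two state equations and estimate the difference with a variation-of-constants (Duhamel) formula built on the propagator for the control $v$. The key structural fact is that this propagator is a trace-norm contraction.

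Set $e(t)=\rho_u(t)-\rho_v(t)$. Using the decomposition $\mathcal{L}_w=\mathcal{L}_0+w\mathcal{B}$, we get
\[
\dot e(t)=\mathcal{L}_{v(t)}(e(t))+\bigl(u(t)-v(t)\bigr)\mathcal{B}(\rho_u(t)),\qquad e(0)=0 .
\]
Let $\Phi_v(t,s)$ denote the solution operator of the linear equation $\dot\sigma=\mathcal{L}_{v(t)}(\sigma)$ from time $s$ to time $t$. For $v\in L^\infty$ this is a Carathéodory linear ODE on $\mathbb{H}_m\cong\mathbb{R}^{m^2}$, so $\Phi_v$ exists and is absolutely continuous. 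Duhamel's formula then gives
\[
e(t)=\int_0^t \bigl(u(s)-v(s)\bigr)\,\Phi_v(t,s)\,\mathcal{B}(\rho_u(s))\,ds .
\]

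The main step is to show that $\|\Phi_v(t,s)X\|_1\le\|X\|_1$ for every Hermitian $X$.

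1. For piecewise constant $v$, each factor $e^{\tau\mathcal{L}_w}$ with fixed $w\in U$ is a completely positive trace-preserving (CPTP) map. This follows from the Lindblad (GKSL) form of $\mathcal{L}_w$, since $H_0+wH_1$ is Hermitian and $\gamma_k\ge0$.
2. Compositions of CPTP maps are again CPTP.
3. For general $v\in L^\infty$, approximate $v$ by piecewise constant controls with values in $U$, converging in $L^1$. A Gronwall argument shows that the propagators converge, and the set of CPTP maps is closed, so $\Phi_v(t,s)$ is CPTP as well.
4. A positive trace-preserving map $\Phi$ is a trace-norm contraction on Hermitian matrices. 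Write $X=X_+-X_-$ with $X_\pm\succeq0$ and $\|X\|_1=\operatorname{Tr}X_++\operatorname{Tr}X_-$. Then
\[
\|\Phi X\|_1\le\|\Phi X_+\|_1+\|\Phi X_-\|_1=\operatorname{Tr}X_++\operatorname{Tr}X_-=\|X\|_1 .
\]

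Of these, the approximation in step 3 is the only delicate point. Alternatively, one can avoid it by establishing trace preservation and positivity directly for time-dependent GKSL generators.

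It remains to bound the source term. The state $\rho_u(s)$ is itself $\Phi_u(s,0)\rho_0$, so it lies in $\mathcal{D}_m$ and has $\|\rho_u(s)\|_1=1$. By Hölder's inequality,
\[
\|\mathcal{B}(\rho_u(s))\|_1=\|[H_1,\rho_u(s)]\|_1\le 2\|H_1\|_2\,\|\rho_u(s)\|_1=2\|H_1\|_2 ,
\]
where $\|\cdot\|_2$ is the spectral norm. Therefore
\[
\|e(t)\|_1\le 2\|H_1\|_2\int_0^t|u(s)-v(s)|\,ds .
\]
Taking the maximum over $t$ gives \eqref{eq:state-control-dependence} with $C_\rho=2\|H_1\|_2$, which is independent of $u$, $v$, and even $T$.

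Finally, \eqref{eq:state-control-dependence-L2} follows from the Cauchy–Schwarz inequality $\|u-v\|_{L^1(0,T)}\le\sqrt{T}\,\|u-v\|_{L^2(0,T)}$.

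If one wanted to avoid the CPTP argument altogether, a fallback is Gronwall's inequality with the bound $\|\mathcal{L}_w\|_{1\to1}\le 2\|H_0\|_2+2\max_{w\in U}|w|\,\|H_1\|_2+2\sum_k\gamma_k\|L_k\|_2^2$. This uses only the compactness of $U$, and yields the same estimate with an exponential constant $C_\rho=2\|H_1\|_2e^{MT}$. The bound $\|\rho_u\|_1=1$ is still needed there, but it follows simply from positivity and trace preservation of the state.
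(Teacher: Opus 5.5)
Your proof is correct and follows the same route as the paper. You form the error equation, using $\mathcal{L}_{v}$ with source $(u-v)\mathcal{B}(\rho_u)$ where the paper uses $\mathcal{L}_{u}$ with source $(u-v)\mathcal{B}(\rho_v)$, which is equivalent; you bound the source by $2\|H_1\|$ via $\|\rho\|_1=1$; and you apply a stability estimate for the Lindblad propagator. Where the paper only invokes ``a standard stability estimate for the Lindblad evolution and Gronwall's inequality,'' you make this explicit through Duhamel's formula and trace-norm contractivity of the time-dependent CPTP propagator, which gives the $T$-independent constant $C_\rho=2\|H_1\|$, and your Gronwall version is a valid fallback.
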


\begin{proof}
Set $\delta\rho(t)=\rho_u(t)-\rho_v(t)$. Using
\[
\mathcal{L}_{u}(\rho)
=
\mathcal{L}_0(\rho)
+
u\mathcal{B}(\rho),
\qquad
\mathcal{B}(\rho)=-\mathrm{i}[H_1,\rho],
\]
we obtain
\[
\dot{\delta\rho}
=
\mathcal{L}_{u(t)}(\delta\rho)
+
(u(t)-v(t))\mathcal{B}(\rho_v).
\]
Since the exact Lindblad evolution preserves positivity and trace, $\|\rho_v(t)\|_1=1$. 
Moreover, we have
\[
\|\mathcal{B}(\rho_v(t))\|_1
=
\|-\mathrm{i}[H_1,\rho_v(t)]\|_1
\leq
2\|H_1\|\,\|\rho_v(t)\|_1
=
2\|H_1\|.
\]
A standard stability estimate for the Lindblad evolution and
Gronwall's inequality therefore give
\[
\|\delta\rho(t)\|_1
\leq
C_\rho
\int_0^t|u(s)-v(s)|\,ds.
\]
Taking the maximum over \(t\in[0,T]\) proves
\eqref{eq:state-control-dependence}. The estimate
\eqref{eq:state-control-dependence-L2} follows from the
Cauchy-Schwarz inequality.
\end{proof}

The sufficient-decrease analysis also requires a uniform estimate for
the adjoint variable. The following lemma shows that the adjoint remains
uniformly bounded in the trace norm, independently of the admissible
control.

\begin{lemma}
\label{lem:adjoint-bound}
Assume that \(\nabla\Theta\) is bounded on the set of density matrices,
that is,
\begin{equation}
\label{eq:terminal-gradient-bound}
\sup_{\rho\in\mathcal{D}_m}
\|\nabla\Theta(\rho)\|_1
\leq
C_\Theta.
\end{equation}
Then there exists a constant \(C_q>0\), independent of
\(u\in\mathcal{U}_{\mathrm{ad}}\), such that the solution of
\eqref{eq:adjoint-equation-PMP} satisfies
\begin{equation}
\label{eq:adjoint-uniform-bound}
\max_{0\leq t\leq T}\|q(t)\|_1
\leq
C_q.
\end{equation}
\end{lemma}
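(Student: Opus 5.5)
The plan is to work with the trace norm as the dual of the operator norm and use the fact that the adjoint generator $\mathcal{L}_v^\ast$ produces a unital, completely positive (Heisenberg-picture) evolution, which is contractive in the operator norm. Trace norm and operator norm are equivalent on $\mathbb{H}_m$ with constants depending only on $m$: $\|X\|\le\|X\|_1\le m\|X\|$. Hence a uniform operator-norm bound will give the trace-norm bound \eqref{eq:adjoint-uniform-bound} with $C_q=mC_\Theta$.

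First I would fix $u\in\mathcal{U}_{\mathrm{ad}}$ and note two things. Since $\rho_u(T)\in\mathcal{D}_m$ by positivity and trace preservation of the exact Lindblad flow, hypothesis \eqref{eq:terminal-gradient-bound} gives $\|q(T)\|_1\le C_\Theta$. Let $\Phi(t,T)$ denote the backward propagator of \eqref{eq:adjoint-equation-PMP}, so that $q(t)=\Phi(t,T)q(T)$. By the defining identity of $\mathcal{L}_v^\ast$ and the same differentiation argument used to derive \eqref{eq:reduced-gradient}, we have $\langle q(t),\rho(t)\rangle_{\mathbb{H}_m}=\langle q(T),\rho(T)\rangle_{\mathbb{H}_m}$ for every forward solution $\rho$. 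Thus $\Phi(t,T)$ is the Hilbert--Schmidt adjoint of the forward propagator $\Psi(T,t)$ of \eqref{LindbladEq} started at time $t$.

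Next I would bound $\|q(t)\|$ by duality. For Hermitian $X$ we have $\|X\|=\sup\{|\operatorname{Tr}(X\sigma)|:\sigma\in\mathcal{D}_m\}$, since the extreme points of $\mathcal{D}_m$ are pure states. For $\sigma\in\mathcal{D}_m$,
\[
|\operatorname{Tr}(q(t)\sigma)|=|\operatorname{Tr}(q(T)\Psi(T,t)\sigma)|\le\|q(T)\|\,\|\Psi(T,t)\sigma\|_1=\|q(T)\|,
\]
because $\Psi(T,t)\sigma\in\mathcal{D}_m$. The step $\|\Psi(T,t)\sigma\|_1=1$ is the same fact invoked in Lemma~\ref{lem:state-control-dependence}, now for an arbitrary initial time $t$ and initial state $\sigma$. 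Hence $\|q(t)\|\le\|q(T)\|\le\|q(T)\|_1\le C_\Theta$, and then $\|q(t)\|_1\le m\|q(t)\|\le mC_\Theta$ for all $t\in[0,T]$. This bound is independent of $u$.

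The main obstacle is justifying that, for merely $L^\infty$ (measurable) controls, the time-dependent forward propagator $\Psi(T,t)$ maps $\mathcal{D}_m$ into itself. I would argue this by approximating $u$ in $L^1$ by piecewise constant controls, for which $\Psi$ is a product of exponentials $e^{\tau\mathcal{L}_v}$ of GKSL generators. Each factor is completely positive and trace preserving, and so is the product. Lemma~\ref{lem:state-control-dependence} (applied with initial time $t$) then passes this to the limit, since $\mathcal{D}_m$ is closed.

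If one prefers to avoid the dimension factor $m$, a Gronwall argument works as an alternative. Bound $\|\mathcal{L}_v^\ast X\|_1\le c\|X\|_1$ with $c=2\max_{v\in U}|v|\,\|H_1\|+2\|H_0\|+2\sum_k\gamma_k\|L_k\|^2$, using compactness of $U$ and $\|AXB\|_1\le\|A\|\|X\|_1\|B\|$. This yields $C_q=C_\Theta e^{cT}$, which is also uniform in $u$ but cruder.
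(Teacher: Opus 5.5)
Your proof is correct, but it takes a different route from the paper. The paper reverses time, $p(s)=q(T-s)$, and invokes a trace-norm stability estimate $\|p(s)\|_1\le C_{\mathcal{L}}\|p(0)\|_1$ for the resulting forward evolution, with $C_{\mathcal{L}}$ depending on $T$, the decay rates and the Lindblad operators. The paper does not spell this estimate out. The natural way to make it explicit is essentially your Gronwall alternative, except that the Hamiltonian and anticommutator terms are absorbed into the congruence $X\mapsto W^{\dagger}XW$ generated by $A(t)$. That congruence is trace-norm contractive because $A+A^{\dagger}\preceq0$ (cf.\ Lemma~\ref{lem3.1}), so only the jump terms enter and the constant is of order $e^{C_LT}$, free of $H_0$, $H_1$ and $U$.

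You instead use duality with the CPTP forward flow to obtain the Heisenberg-picture contraction $\|q(t)\|\le\|q(T)\|$. This has constant one, independent of $T$, $\gamma_k$, $L_k$ and $U$, and you pay a factor $m$ only when returning to the trace norm. That factor is unattractive in the large-$m$ regime that motivates the paper. However, the only use of the lemma, the bound on $\langle q^k,\mathcal{B}(\delta\rho)\rangle_{\mathbb{H}_m}$ in Theorem~\ref{thm:SQH-sufficient-decrease}, needs only $\|q^k\|$ by H\"older's inequality. So your intermediate operator-norm estimate already suffices there, with constant $C_\Theta$.

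You are also more careful than the paper, which takes positivity and trace preservation of the exact flow for granted for measurable controls. In your approximation step, avoid circularity in one of two ways:
\begin{itemize}
\item let the piecewise constant control play the role of $v$ in Lemma~\ref{lem:state-control-dependence}, whose proof uses $\|\rho_v(t)\|_1=1$, and use a plain Gronwall bound for the homogeneous part; or
\item bypass approximation altogether. With $\partial_sV(s,t)=A(s)V(s,t)$ and $V(t,t)=I$, every Picard iterate for $\rho(s)=V(s,t)\sigma V(s,t)^{\dagger}+\int_t^sV(s,r)\bigl(\sum_k\gamma_kL_k\rho(r)L_k^{\dagger}\bigr)V(s,r)^{\dagger}\,dr$ is positive semidefinite, and $\operatorname{Tr}\mathcal{L}_v(\rho)=0$ gives trace preservation.
\end{itemize}
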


\begin{proof}
Introduce the time-reversed variable $p(s)=q(T-s)$. 
Then \(p\) satisfies a forward linear equation generated by the
time-reversed adjoint Lindblad operator, with initial value
\[
p(0)=\nabla\Theta(\rho_u(T)).
\]
A stability estimate for this evolution gives
\[
\|p(s)\|_1
\leq
C_{\mathcal{L}}
\|p(0)\|_1,
\]
where \(C_{\mathcal{L}}>0\) depends on \(T\), the decay rates, and the
Lindblad operators, but not on the control. Assumption
\eqref{eq:terminal-gradient-bound} therefore yields
\[
\|q(t)\|_1
\leq
C_{\mathcal{L}}C_\Theta.
\]
The assertion follows with $C_q=C_{\mathcal{L}}C_\Theta$. 
\end{proof}

The preceding stability estimates provide the ingredients required to
establish the sufficient-decrease property of the SQH iteration. The
only additional assumption concerns the regularity of the terminal
functional.

For a general terminal functional, the variation of the objective
contains a nonlinear terminal remainder. We therefore impose the
following assumption:
\begin{equation}
\label{eq:Lipschitz-terminal-gradient}
\|\nabla\Theta(\rho_1)-\nabla\Theta(\rho_2)\|_F
\leq
L_\Theta
\|\rho_1-\rho_2\|_F
\end{equation}
for all \(\rho_1,\rho_2\in\mathcal{D}_m\), with some
\(L_\Theta\geq0\).

This assumption holds with \(L_\Theta=0\) for the observable objective
\eqref{eq:observable-terminal-cost}, and with \(L_\Theta=1\) for the
quadratic tracking objective
\eqref{eq:tracking-terminal-cost}.

\begin{theorem}[Sufficient decrease]
\label{thm:SQH-sufficient-decrease}
Assume \eqref{eq:terminal-gradient-bound} and
\eqref{eq:Lipschitz-terminal-gradient}. Let \(u^k\) be the current
control, let \(\rho^k\) and \(q^k\) solve
\eqref{eq:SQH-state}-\eqref{eq:SQH-adjoint}, and let \(u^{k+1}\) be
defined by \eqref{eq:SQH-pointwise-update}. Then there exists a constant
\(C_{\mathrm{SQH}}>0\), independent of \(k\), such that
\begin{equation}
\label{eq:SQH-objective-estimate}
J(u^{k+1})-J(u^k)
\leq
-
\left(
\frac{\varepsilon_k}{2}
-
C_{\mathrm{SQH}}
\right)
\|u^{k+1}-u^k\|_{L^2(0,T)}^2.
\end{equation}
Consequently, if
\begin{equation}
\label{eq:SQH-stabilization-condition}
\varepsilon_k
\geq
2C_{\mathrm{SQH}}+2\eta
\end{equation}
for some \(\eta>0\), then
\begin{equation}
\label{eq:SQH-sufficient-decrease}
J(u^{k+1})
\leq
J(u^k)
-
\eta \, 
\|u^{k+1}-u^k\|_{L^2(0,T)}^2.
\end{equation}
\end{theorem}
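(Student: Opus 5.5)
The proof follows the standard SQH argument: write the change of the objective in terms of the HP function, then control the remainders by the stability lemmas. Set $\delta u=u^{k+1}-u^k$, $\rho^{k+1}=\rho_{u^{k+1}}$ and $\delta\rho=\rho^{k+1}-\rho^k$. Then
\[
J(u^{k+1})-J(u^k)=\Theta(\rho^{k+1}(T))-\Theta(\rho^k(T))+\int_0^T\big(\ell(u^{k+1})-\ell(u^k)\big)\,dt .
\]
For the terminal part, the mean value theorem along the segment between $\rho^k(T)$ and $\rho^{k+1}(T)$ gives
\[
\Theta(\rho^{k+1}(T))-\Theta(\rho^k(T))=\langle \nabla\Theta(\rho^k(T)),\delta\rho(T)\rangle_{\mathbb{H}_m}+R_T .
\]
The segment lies in the convex set $\mathcal{D}_m$, so \eqref{eq:Lipschitz-terminal-gradient} applies and yields $|R_T|\le \frac{L_\Theta}{2}\|\delta\rho(T)\|_F^2$. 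Since $q^k(T)=\nabla\Theta(\rho^k(T))$, the first term equals $\langle q^k(T),\delta\rho(T)\rangle_{\mathbb{H}_m}$.

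Next I would represent this pairing by differentiating $\langle q^k(t),\delta\rho(t)\rangle_{\mathbb{H}_m}$, exactly as in the derivation of \eqref{eq:reduced-gradient}. Here $\delta\rho$ satisfies the exact (not linearized) equation
\[
\dot{\delta\rho}=\mathcal{L}_{u^k}(\delta\rho)+\delta u\,\mathcal{B}(\rho^{k+1}),\qquad \delta\rho(0)=0 .
\]
The adjoint equation \eqref{eq:SQH-adjoint} cancels the $\mathcal{L}_{u^k}$ part, so
\[
\langle q^k(T),\delta\rho(T)\rangle_{\mathbb{H}_m}=\int_0^T\delta u\,\langle q^k,\mathcal{B}(\rho^{k+1})\rangle_{\mathbb{H}_m}\,dt .
\]
I then split $\mathcal{B}(\rho^{k+1})=\mathcal{B}(\rho^k)+\mathcal{B}(\delta\rho)$. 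Combined with the running cost, the $\mathcal{B}(\rho^k)$ piece produces $\int_0^T[h(\rho^k,q^k,u^{k+1})-h(\rho^k,q^k,u^k)]\,dt$, where $h(\rho,q,v)=\ell(v)+v\zeta(\rho,q)$. This equals the difference of the HP function $\mathscr{H}$ at $v=u^{k+1}$ and $v=u^k$, because the control-independent part cancels. By \eqref{eq:SQH-Hamiltonian-decrease-pointwise} it is bounded by $-\frac{\varepsilon_k}{2}\|\delta u\|_{L^2}^2$.

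It remains to bound the two remainders by $C\|\delta u\|_{L^2}^2$; this is the main technical step. For the cross term I use $|\langle q,\mathcal{B}(X)\rangle|\le \|q\|\,\|\mathcal{B}(X)\|_1\le 2\|H_1\|\,\|q\|\,\|X\|_1$, where $\|\cdot\|$ is the operator norm and $\|q\|\le\|q\|_1$. Lemma~\ref{lem:adjoint-bound} gives $\|q^k(t)\|_1\le C_q$, and \eqref{eq:state-control-dependence-L2} gives $\max_t\|\delta\rho(t)\|_1\le C_\rho\sqrt T\|\delta u\|_{L^2}$. Together with $\int_0^T|\delta u|\,dt\le\sqrt T\|\delta u\|_{L^2}$, the cross term is bounded by $2\|H_1\|C_qC_\rho T\|\delta u\|_{L^2}^2$.

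For the terminal remainder, $\|\delta\rho(T)\|_F\le\|\delta\rho(T)\|_1$, so $|R_T|\le \frac{L_\Theta}{2}C_\rho^2T\|\delta u\|_{L^2}^2$. Setting
\[
C_{\mathrm{SQH}}=2\|H_1\|C_qC_\rho T+\tfrac{L_\Theta}{2}C_\rho^2T
\]
gives \eqref{eq:SQH-objective-estimate}. This constant is independent of $k$, because $C_\rho$ and $C_q$ are independent of the controls and \eqref{eq:terminal-gradient-bound} holds uniformly on $\mathcal{D}_m$. Finally, \eqref{eq:SQH-sufficient-decrease} follows by inserting \eqref{eq:SQH-stabilization-condition}.

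The delicate points are twofold. One must use the exact difference equation for $\delta\rho$ rather than the linearization. One must also confirm that the mean-value remainder is legitimate on $\mathcal{D}_m$, which relies on its convexity.
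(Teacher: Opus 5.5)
Your proof is correct and follows essentially the same route as the paper. You use the descent lemma for the terminal term, then express $\langle q^k(T),\delta\rho(T)\rangle_{\mathbb{H}_m}$ through the adjoint equation, and the split $\mathcal{B}(\rho^{k+1})=\mathcal{B}(\rho^k)+\mathcal{B}(\delta\rho)$ yields the HP-function difference plus the cross term; Lemmas~\ref{lem:state-control-dependence} and \ref{lem:adjoint-bound} then bound the remainders by $C\|\delta u\|_{L^2(0,T)}^2$. You also supply two details the paper leaves implicit: the exact (not linearized) equation for $\delta\rho$, and the convexity of $\mathcal{D}_m$ needed to apply the descent lemma when the Lipschitz assumption is posed only on $\mathcal{D}_m$.
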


\begin{proof}
Set
\[
\delta u=u^{k+1}-u^k,
\qquad
\delta\rho=\rho^{k+1}-\rho^k.
\]
By the descent lemma associated with
\eqref{eq:Lipschitz-terminal-gradient}, we have 
\begin{equation}
\label{eq:terminal-descent-lemma}
\Theta(\rho^{k+1}(T))
-
\Theta(\rho^k(T))
\leq
\left\langle
q^k(T),\delta\rho(T)
\right\rangle_{\mathbb{H}_m}
+
\frac{L_\Theta}{2}
\|\delta\rho(T)\|_F^2.
\end{equation}
Using the state equations for \(\rho^{k+1}\) and \(\rho^k\), the
adjoint equation for \(q^k\), and integration by parts, we obtain
\begin{align}
J(u^{k+1})-J(u^k)
&\leq
\int_0^T
\Bigl[
\mathscr{H}
(\rho^k,q^k,u^{k+1})
-
\mathscr{H}
(\rho^k,q^k,u^k)
\Bigr]\,dt
\nonumber\\
&\quad
+
\int_0^T
\delta u(t)
\left\langle
q^k(t),\mathcal{B}(\delta\rho(t))
\right\rangle_{\mathbb{H}_m}
\,dt
+
\frac{L_\Theta}{2}
\|\delta\rho(T)\|_F^2.
\label{eq:SQH-fundamental-estimate}
\end{align}
The pointwise estimate
\eqref{eq:SQH-Hamiltonian-decrease-pointwise} yields
\[
\int_0^T
\Bigl[
\mathscr{H}
(\rho^k,q^k,u^{k+1})
-
\mathscr{H}
(\rho^k,q^k,u^k)
\Bigr]\,dt
\leq
-\frac{\varepsilon_k}{2}
\|\delta u\|_{L^2(0,T)}^2.
\]
Furthermore, we have
\[
\left|
\left\langle
q^k,\mathcal{B}(\delta\rho)
\right\rangle_{\mathbb{H}_m}
\right|
\leq
2\|H_1\|\,
\|q^k\|_1
\|\delta\rho\|_1.
\]
By Lemmas~\ref{lem:state-control-dependence} and
\ref{lem:adjoint-bound}, we obtain 
\[
\begin{aligned}
\left|
\int_0^T
\delta u(t)
\left\langle
q^k(t),\mathcal{B}(\delta\rho(t))
\right\rangle_{\mathbb{H}_m}
\,dt
\right| \leq C
\|\delta u\|_{L^1(0,T)}^2 \leq
CT\|\delta u\|_{L^2(0,T)}^2.
\end{aligned}
\]
Similarly, we have
\[
\|\delta\rho(T)\|_F^2
\leq
C
\|\delta u\|_{L^1(0,T)}^2
\leq
CT\|\delta u\|_{L^2(0,T)}^2.
\]
Combining these estimates with
\eqref{eq:SQH-fundamental-estimate} gives
\eqref{eq:SQH-objective-estimate}. The sufficient-decrease estimate
\eqref{eq:SQH-sufficient-decrease} follows from
\eqref{eq:SQH-stabilization-condition}.
\end{proof}

By Theorem~\ref{thm:SQH-sufficient-decrease}, the acceptance condition
\eqref{eq:SQH-accepted-decrease} is guaranteed whenever the trial
stabilization parameter satisfies
\eqref{eq:SQH-stabilization-condition}. The adaptive inner loop
therefore produces an accepted iterate after finitely many increases of
the stabilization parameter.

\begin{corollary}[Monotonicity and asymptotic regularity]
\label{cor:SQH-monotonicity}

Assume that the objective functional \(J\) is bounded from below on
\(\mathcal{U}_{\mathrm{ad}}\), and let
\(\{u^k\}_{k\geq0}\) be the sequence of accepted control iterates
generated by Algorithm~\ref{alg:SQH}. Suppose that every accepted step
satisfies
\begin{equation}
\label{eq:SQH-accepted-decrease}
J(u^{k+1})
\leq
J(u^k)
-
\eta
\|u^{k+1}-u^k\|_{L^2(0,T)}^2
\end{equation}
with a fixed constant \(\eta>0\). Then the sequence
\(\{J(u^k)\}_{k\geq0}\) is monotonically decreasing and converges.
Moreover,
\begin{equation}
\label{eq:SQH-square-summability}
\sum_{k=0}^{\infty}
\|u^{k+1}-u^k\|_{L^2(0,T)}^2
<
\infty,
\end{equation}
and consequently
\begin{equation}
\label{eq:SQH-asymptotic-regularity}
\lim_{k\to\infty}
\|u^{k+1}-u^k\|_{L^2(0,T)}
=
0.
\end{equation}

\end{corollary}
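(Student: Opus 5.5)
The plan is a telescoping argument driven entirely by the accepted-decrease inequality \eqref{eq:SQH-accepted-decrease}. No further structure of the Lindblad dynamics is needed beyond what is already encoded in that inequality, which Theorem~\ref{thm:SQH-sufficient-decrease} guarantees. The argument has three short steps, given in the next three paragraphs.

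First, monotonicity is immediate. Since \(\eta>0\) and \(\|u^{k+1}-u^k\|_{L^2(0,T)}^2\geq 0\), inequality \eqref{eq:SQH-accepted-decrease} gives \(J(u^{k+1})\leq J(u^k)\) for every \(k\). Thus \(\{J(u^k)\}_{k\geq0}\) is nonincreasing. By hypothesis it is bounded below by \(J_{\inf}:=\inf_{u\in\mathcal{U}_{\mathrm{ad}}}J(u)>-\infty\). A monotone bounded real sequence converges, so \(J(u^k)\to J_\infty\) for some \(J_\infty\geq J_{\inf}\).

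Second, for square summability I would rearrange \eqref{eq:SQH-accepted-decrease} as
\[
\eta\|u^{k+1}-u^k\|_{L^2(0,T)}^2\leq J(u^k)-J(u^{k+1})
\]
and sum over \(k=0,\dots,N\). The right-hand side telescopes, giving
\[
\eta\sum_{k=0}^{N}\|u^{k+1}-u^k\|_{L^2(0,T)}^2\leq J(u^0)-J(u^{N+1})\leq J(u^0)-J_{\inf}.
\]
The partial sums of this nonnegative series are therefore uniformly bounded by \((J(u^0)-J_{\inf})/\eta\). Letting \(N\to\infty\) yields \eqref{eq:SQH-square-summability}.

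Third, the terms of a convergent series of nonnegative numbers tend to zero. Hence \(\|u^{k+1}-u^k\|_{L^2(0,T)}^2\to0\), and taking square roots gives \eqref{eq:SQH-asymptotic-regularity}.

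There is no genuine obstacle. The only point requiring care is that \(J(u^0)\) is finite, so that the telescoped bound is meaningful. This holds because \(\Theta\) is continuous on the compact set \(\mathcal{D}_m\), and \(\ell\) is bounded below and lower semicontinuous on the compact set \(U\), so \(\ell\circ u^0\) is bounded below. I would therefore implicitly assume that the initial control has finite cost, as is standard. It is also worth noting that \(\eta\) must be fixed, not merely positive at each step, for the telescoped bound to control the series.
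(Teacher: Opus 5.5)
Your proof is correct and is essentially the paper's argument: monotonicity follows from \eqref{eq:SQH-accepted-decrease}, convergence from boundedness below, square summability from telescoping the summed inequality, and asymptotic regularity from the vanishing of the terms of a convergent nonnegative series; the only cosmetic difference is that you bound the partial sums by $J(u^0)-\inf_{\mathcal{U}_{\mathrm{ad}}}J$ instead of $J(u^0)-J_\infty$. One small remark: compactness of $U$ and lower semicontinuity of $\ell$ only give $J(u^0)>-\infty$, not $J(u^0)<\infty$, since a real-valued lower semicontinuous function on a compact set can be unbounded above, so finiteness of $J(u^0)$ is indeed an implicit standing assumption, exactly as in the paper.
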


\begin{proof}
From \eqref{eq:SQH-accepted-decrease}, it follows that $J(u^{k+1}) \leq J(u^k)$ 
for every \(k\geq0\). Hence, \(\{J(u^k)\}_{k\geq0}\) is monotonically
decreasing. Since \(J\) is bounded from below on
\(\mathcal{U}_{\mathrm{ad}}\), there exists
\(J_\infty\in\mathbb{R}\) such that $J(u^k)\longrightarrow J_\infty$ as 
$k\to\infty$. Summing \eqref{eq:SQH-accepted-decrease} from \(k=0\) to \(M\) gives
\[
\eta
\sum_{k=0}^{M}
\|u^{k+1}-u^k\|_{L^2(0,T)}^2
\leq
J(u^0)-J(u^{M+1}).
\]
Passing to the limit \(M\to\infty\), we obtain
\[
\eta
\sum_{k=0}^{\infty}
\|u^{k+1}-u^k\|_{L^2(0,T)}^2
\leq
J(u^0)-J_\infty
<
\infty.
\]
This proves \eqref{eq:SQH-square-summability}. Since the terms of a
convergent series of nonnegative numbers must converge to zero,
\eqref{eq:SQH-asymptotic-regularity} follows.
\end{proof}

\begin{rem}[Finite stopping for a positive tolerance]
\label{rem:SQH-finite-stopping}

Let \(\kappa>0\). By \eqref{eq:SQH-asymptotic-regularity}, there exists an index
\(k_0\) such that
\[
\|u^{k_0+1}-u^{k_0}\|_{L^2(0,T)}
\leq
\kappa.
\]
Therefore, in the absence of a smaller prescribed maximum iteration
count, Algorithm~\ref{alg:SQH} terminates after finitely many accepted
outer iterations.
\end{rem}

\subsection{Adaptive choice of the stabilization parameter}
\label{subsec:adaptive-SQH}

The constant \(C_{\mathrm{SQH}}\) in
Theorem~\ref{thm:SQH-sufficient-decrease} is generally not known
explicitly and, even if an estimate were available, it could lead to an
overly conservative stabilization parameter. We therefore determine the
stabilization parameter adaptively.

Let
\[
\eta>0,
\qquad
\sigma>1,
\qquad
\zeta\in(0,1),
\]
where \(\eta\) is the prescribed sufficient-decrease constant,
\(\sigma\) is the parameter-increase factor, and \(\zeta\) is the
parameter-decrease factor. Starting from a trial value
\(\varepsilon>0\), the pointwise SQH update is computed and the state
equation is solved with the resulting trial control
\(u^{\mathrm{trial}}\). The trial step is accepted if
\begin{equation}
\label{eq:SQH-acceptance-condition}
J(u^{\mathrm{trial}}) \leq J(u^k) - \eta \, \|u^{\mathrm{trial}}-u^k\|_{L^2(0,T)}^2.
\end{equation}
If \eqref{eq:SQH-acceptance-condition} fails, the stabilization
parameter is increased according to $\varepsilon \leftarrow \sigma\varepsilon$, 
and the pointwise minimization is repeated using the same state
\(\rho^k\) and adjoint \(q^k\). After an accepted step, the initial
trial value for the next outer iteration is chosen as $\varepsilon_{k+1}=\zeta\varepsilon$. 
Thus, rejected steps increase the stabilization, whereas accepted steps
allow a moderate reduction of the stabilization parameter.

\begin{corollary}[Finite termination of the inner SQH loop]
\label{cor:SQH-inner-loop}

Assume the hypotheses of
Theorem~\ref{thm:SQH-sufficient-decrease}. For every outer iterate
\(u^k\) and every initial trial value \(\varepsilon>0\), the inner loop
that repeatedly replaces $\varepsilon \leftarrow \sigma\varepsilon$, 
$\sigma>1$, terminates after finitely many updates.
\end{corollary}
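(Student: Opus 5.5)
The argument is a direct consequence of Theorem~\ref{thm:SQH-sufficient-decrease}. The key point is that the constant \(C_{\mathrm{SQH}}\) there is independent of the stabilization parameter and of the particular minimizer selected. The inner loop only modifies \(\varepsilon\), while the state \(\rho^k\) and adjoint \(q^k\) stay fixed at the current outer iterate.

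The first step is to check that the proof of Theorem~\ref{thm:SQH-sufficient-decrease} applies verbatim to each trial control. Fix \(u^k\), hence \(\rho^k\) and \(q^k\). For any \(\varepsilon>0\), let \(u^{\mathrm{trial}}(\varepsilon)\) be a measurable selection of the pointwise minimizers in \eqref{eq:SQH-pointwise-update}, with \(\varepsilon_k\) replaced by \(\varepsilon\). Such a selection exists by compactness of \(U\) and lower semicontinuity of \(\ell\).

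The proof of the theorem used only three ingredients:
\begin{itemize}
\item the pointwise decrease \eqref{eq:SQH-Hamiltonian-decrease-pointwise}, which holds for any \(\varepsilon\);
\item Lemmas~\ref{lem:state-control-dependence} and \ref{lem:adjoint-bound}, whose constants are independent of the controls;
\item the Lipschitz bound \eqref{eq:Lipschitz-terminal-gradient}.
\end{itemize}
Hence
\[
J(u^{\mathrm{trial}}(\varepsilon))-J(u^k)\le -\Bigl(\tfrac{\varepsilon}{2}-C_{\mathrm{SQH}}\Bigr)\|u^{\mathrm{trial}}(\varepsilon)-u^k\|_{L^2(0,T)}^2
\]
with \(C_{\mathrm{SQH}}\) independent of \(\varepsilon\) and \(k\).

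The second step is the counting argument. After \(j\) rejections the trial parameter equals \(\sigma^j\varepsilon\). Because \(\sigma>1\), we have \(\sigma^j\varepsilon\to\infty\), so there is a smallest \(j^\ast\in\mathbb{N}\) with
\[
\sigma^{j^\ast}\varepsilon\ge 2C_{\mathrm{SQH}}+2\eta,
\]
namely
\[
j^\ast=\max\Bigl\{0,\Bigl\lceil \log_\sigma\bigl((2C_{\mathrm{SQH}}+2\eta)/\varepsilon\bigr)\Bigr\rceil\Bigr\}.
\]
At that parameter, condition \eqref{eq:SQH-stabilization-condition} holds. The estimate from the first step then gives exactly \eqref{eq:SQH-acceptance-condition}, so the trial is accepted. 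Therefore the loop stops after at most \(j^\ast\) increases; it may stop earlier if an earlier trial already satisfies the acceptance test.

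The only subtle point is the first step: one must confirm that nothing in the constant of Theorem~\ref{thm:SQH-sufficient-decrease} depends on \(\varepsilon_k\) or on the choice among nonunique minimizers. This holds because the theorem's proof uses the update rule only through \eqref{eq:SQH-Hamiltonian-decrease-pointwise}.

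The degenerate case \(u^{\mathrm{trial}}=u^k\) needs no separate treatment: both sides of the acceptance inequality then vanish, so the step is accepted trivially.
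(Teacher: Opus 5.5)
Your proposal is correct and follows the paper's route: apply Theorem~\ref{thm:SQH-sufficient-decrease} with $\varepsilon_k$ replaced by the current trial value, and observe that the geometric sequence $\sigma^j\varepsilon$ eventually exceeds $2C_{\mathrm{SQH}}+2\eta$. Your extra checks make explicit what the paper's short proof leaves implicit: that $C_{\mathrm{SQH}}$ is independent of $\varepsilon$ and of the choice of minimizer, the explicit bound $j^\ast$, and the degenerate case $u^{\mathrm{trial}}=u^k$.
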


\begin{proof}
By Theorem~\ref{thm:SQH-sufficient-decrease}, the acceptance condition
\eqref{eq:SQH-acceptance-condition} is guaranteed whenever 
$\varepsilon \geq 2C_{\mathrm{SQH}}+2\eta$. 
Since the sequence of trial parameters generated in the inner loop is 
$\varepsilon,\, \sigma\varepsilon,\, \sigma^2\varepsilon,\ldots$, 
and \(\sigma>1\), this sequence eventually exceeds
\(2C_{\mathrm{SQH}}+2\eta\). Hence the sufficient-decrease condition
must hold after finitely many inner iterations.
\end{proof}

\begin{corollary}[Boundedness of the accepted stabilization parameters]
\label{cor:SQH-epsilon-bounded}

Let \(\widehat\varepsilon_k\) denote the stabilization parameter at
which the \(k\)-th SQH step is accepted. Then
\[
\widehat\varepsilon_k
\leq
\max
\left\{
\varepsilon_0,\,
\sigma\bigl(2C_{\mathrm{SQH}}+2\eta\bigr)
\right\}
\]
for every \(k\geq0\).

\end{corollary}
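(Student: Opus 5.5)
The bound follows by induction on \(k\), using the mechanics of the adaptive rule in Subsection~\ref{subsec:adaptive-SQH} together with the threshold from Theorem~\ref{thm:SQH-sufficient-decrease}. Set \(\varepsilon^\ast:=2C_{\mathrm{SQH}}+2\eta\) and \(M:=\max\{\varepsilon_0,\sigma\varepsilon^\ast\}\). The key observation is the following. In the \(k\)-th outer iteration the inner loop starts from a trial value \(\varepsilon_k\) and produces the sequence \(\varepsilon_k,\sigma\varepsilon_k,\sigma^2\varepsilon_k,\ldots\). By Theorem~\ref{thm:SQH-sufficient-decrease}, every trial value \(\geq\varepsilon^\ast\) yields a control satisfying \eqref{eq:SQH-acceptance-condition}, so it is accepted. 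Corollary~\ref{cor:SQH-inner-loop} guarantees that the loop stops, so \(\widehat\varepsilon_k\) is well defined.

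I would show \(\widehat\varepsilon_k\leq\max\{\varepsilon_k,\sigma\varepsilon^\ast\}\) by two cases.
\begin{itemize}
\item If \(\varepsilon_k\geq\varepsilon^\ast\), the first trial is accepted and \(\widehat\varepsilon_k=\varepsilon_k\).
\item If \(\varepsilon_k<\varepsilon^\ast\), let \(j\ge1\) be the number of increases performed. The previous trial \(\sigma^{j-1}\varepsilon_k\) was rejected, so it must have been below \(\varepsilon^\ast\); otherwise it would have been accepted. Hence \(\widehat\varepsilon_k=\sigma\cdot\sigma^{j-1}\varepsilon_k<\sigma\varepsilon^\ast\).
\end{itemize}

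Next I would bound the starting values. The rule \(\varepsilon_{k+1}=\zeta\widehat\varepsilon_k\) with \(\zeta\in(0,1)\) gives \(\varepsilon_{k+1}\leq\widehat\varepsilon_k\). Induction on \(k\) then shows \(\varepsilon_k\leq M\) and \(\widehat\varepsilon_k\leq M\) for all \(k\). The base case is \(\varepsilon_0\leq M\). For the inductive step, if \(\varepsilon_k\leq M\), the case analysis gives \(\widehat\varepsilon_k\leq\max\{M,\sigma\varepsilon^\ast\}=M\), and therefore \(\varepsilon_{k+1}\leq\widehat\varepsilon_k\leq M\).

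The only delicate point is the second case. It needs the fact that acceptance is \emph{guaranteed} at every trial value \(\geq\varepsilon^\ast\), not merely at some of them. This requires that \(C_{\mathrm{SQH}}\) is independent of \(k\), of the current control, and of the trial parameter. Theorem~\ref{thm:SQH-sufficient-decrease} provides exactly this, since its constant comes only from Lemmas~\ref{lem:state-control-dependence} and \ref{lem:adjoint-bound}, which are uniform over \(\mathcal{U}_{\mathrm{ad}}\). I would also state explicitly that the inner loop accepts the first trial value satisfying \eqref{eq:SQH-acceptance-condition}, as the algorithm does.
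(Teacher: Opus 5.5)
Your proposal is correct and is essentially the paper's argument. The last rejected trial value lies below $2C_{\mathrm{SQH}}+2\eta$, so the accepted value lies below $\sigma(2C_{\mathrm{SQH}}+2\eta)$, and the reduction $\varepsilon_{k+1}=\zeta\widehat\varepsilon_k$ with $\zeta<1$ cannot raise the bound; you simply make the induction explicit. One small slip: in your case $\varepsilon_k<\varepsilon^\ast$ you assume $j\geq1$, but the first trial may be accepted even below the threshold, giving $\widehat\varepsilon_k=\varepsilon_k$. Your bound $\max\{\varepsilon_k,\sigma\varepsilon^\ast\}$ still covers this, but splitting on whether any increase occurred, as the paper does, is cleaner.
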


\begin{proof}
If the initial trial value at an outer iteration already satisfies the
acceptance condition, no increase is required. Otherwise, let
\(\varepsilon\) be the last rejected value. Since the next value
\(\sigma\varepsilon\) is accepted and rejection is impossible once
\(\varepsilon\geq2C_{\mathrm{SQH}}+2\eta\), it follows that
\[
\widehat\varepsilon_k < \sigma\bigl(2C_{\mathrm{SQH}}+2\eta\bigr).
\]
After acceptance, the next initial trial value is reduced by the factor
\(\zeta\in(0,1)\), and hence cannot create a larger upper bound.
Including the initial value \(\varepsilon_0\) gives the assertion.
\end{proof}

The adaptive SQH iteration depends on the initial stabilization
parameter, the increase and decrease factors, and the prescribed
sufficient-decrease constant. Numerical experience with SQH methods in
other optimization settings indicates that the algorithm is relatively
insensitive to moderate variations of these parameters, while the
stopping tolerance affects only the final termination accuracy. In the
present work, the same parameter values are employed throughout all
numerical experiments.

Algorithm~\ref{alg:SQH} summarizes one complete SQH iteration.
\begin{algorithm}[h!]
\caption{Sequential quadratic Hamiltonian method}
\label{alg:SQH}
\begin{algorithmic}[1]
\State Choose
\[
u^0\in\mathcal{U}_{\mathrm{ad}},
\qquad
\varepsilon_0>0,
\qquad
\sigma>1,
\qquad
\zeta\in(0,1),
\qquad
\eta>0,
\qquad
\kappa>0,
\]
and a maximum number of outer iterations \(k_{\max}\).
\State Solve the state equation
\eqref{eq:SQH-state} for \(\rho^0\).
\For{\(k=0,1,\ldots,k_{\max}-1\)}
  \State Solve the adjoint equation
  \eqref{eq:SQH-adjoint} backward for \(q^k\).
  \State Set the trial stabilization parameter
  \[
  \varepsilon=\varepsilon_k.
  \]
  \Repeat
    \State Compute a measurable pointwise minimizer
    \[
    u^{\mathrm{trial}}(t)
    \in
    \operatorname*{arg\,min}_{v\in U}
    \left\{
    \ell(v)
    +
    v\,\zeta(\rho^k(t),q^k(t))
    +
    \frac{\varepsilon}{2}
    |v-u^k(t)|^2
    \right\}.
    \]
    \State Solve the state equation with
    \(u^{\mathrm{trial}}\) and denote the resulting state by
    \(\rho^{\mathrm{trial}}\).
    \State Evaluate \(J(u^{\mathrm{trial}})\).
    \If{
    \[
    J(u^{\mathrm{trial}})
    \leq
    J(u^k)
    -
    \eta
    \|u^{\mathrm{trial}}-u^k\|_{L^2(0,T)}^2
    \]
    }
      \State Accept
      \[
      u^{k+1}=u^{\mathrm{trial}},
      \qquad
      \rho^{k+1}=\rho^{\mathrm{trial}},
      \qquad
      \widehat\varepsilon_k=\varepsilon.
      \]
      \State Set
      \[
      \varepsilon_{k+1}
      =
      \zeta\widehat\varepsilon_k.
      \]
    \Else
      \State Increase
      \[
      \varepsilon
      \leftarrow
      \sigma\varepsilon.
      \]
    \EndIf
  \Until{the trial control is accepted}
  \State Set
  \[
  \omega_k
  =
  \|u^{k+1}-u^k\|_{L^2(0,T)}.
  \]
  \If{\(\omega_k\leq\kappa\)}
    \State Stop.
  \EndIf
\EndFor
\end{algorithmic}
\end{algorithm}

\FloatBarrier

The SQH algorithm is formulated at the continuous level and is
independent of the particular time discretization. Its implementation
requires accurate and structure-preserving propagators for the forward
and adjoint Lindblad equations. In the present work, these equations
are discretized by second-order full- and low-rank exponential midpoint
schemes, leading to the FREM-SQH and LREM-SQH algorithms.

\section{Exponential midpoint propagation of the optimality system}
\label{sec-ExpInt}

The forward and adjoint equations arising in the SQH iteration are now
discretized by exponential midpoint propagators. To treat coherent and
time-dependent incoherent coefficients in a common form, define

\begin{equation}\label{equ2.1}
  A(t):=- i \, H(t)-\frac{1}{2}\sum_{k=1}^K\gamma_k(t) \, L_k^{\dag} \, L_k.
\end{equation}
Then, we can rewrite \eqref{LindbladEq} as
\begin{equation}\label{equ2.2}
  \dot{\rho}(t)=A(t) \, \rho(t)+\rho(t) \,A^{\dag}(t)+\sum_{k=1}^K\gamma_k(t) \, L_k \, \rho(t) \,L_k^{\dag}, \qquad \rho(0)=\rho_0,
\end{equation}
and the adjoint Lindblad equation \eqref{AdjointLindbladEq} can be written as
\begin{equation}\label{equ2.3}
  \dot{q}(t)=-A^{\dag}(t) \, q(t) - q(t) \,A(t)-\sum_{k=1}^K\gamma_k(t) \, L_k^{\dag} \, q(t) \,L_k, \qquad q(T)=q_T.
\end{equation}

This representation includes coherent Hamiltonian control when the
decay rates are constant and also permits time-dependent decay
coefficients. The SQH analysis developed above is restricted to the
coherent-control setting.

\subsection{Full-rank midpoint propagators}

Let \(t_n=n\tau\), \(n=0,\ldots,N\), with \(\tau=T/N\), and let
\(\rho_n\) approximate \(\rho(t_n)\). To derive the forward propagator,
freeze the linear part of \eqref{equ2.2} at \(A_j=A(t_j)\) and write
\begin{equation}\label{equ2.4}
  \dot{\rho}(t)=A_j \, \rho(t)+\rho(t) \,A^{\dag}_j+F(t,\rho(t),A_j), \qquad \rho(0)=\rho_0,
\end{equation}
where
\[F(t,\rho(t),A_j)=\sum_{k=1}^K\gamma_k(t) \, L_k \, \rho(t) \,L_k^{\dag}+(A(t)-A_j)\, \rho(t)+\rho(t)(A^{\dag}(t) -A^{\dag}_j).\]
Integrating \eqref{equ2.4} from $t_n$ to $t$ and applying the variation-of-constants formula, we get
\begin{equation}\label{equ2.5}
  \rho(t)=e^{(t-t_n)A_j} \rho(t_n) e^{(t-t_n)A^{\dag}_j} +
  \int_0^{t-t_n}e^{(t-t_n-s)A_j} F(t_n+s, \rho(t_n+s), A_j) e^{(t-t_n-s)A^{\dag}_j}ds.
\end{equation}
Letting $t=t_n+\frac{\tau}{2}:=t_{n+1/2}$ (resp. $t=t_{n+1}$) and $j=n$ (resp. $j=n+1/2$) in \eqref{equ2.5}, we obtain
\begin{subequations}\label{equ2.6}
\begin{align}
  &\rho(t_{n+1/2})=e^{\frac{\tau}{2} A_n} \, \rho(t_n) \, e^{\frac{\tau}{2} A_n^{\dag}} +
  \int_0^{\frac{\tau}{2}}e^{(\frac{\tau}{2}-s)A_n} \, F(t_n+s, \rho(t_n+s), A_n) \, e^{(\frac{\tau}{2}-s)A_n^{\dag}}ds,\\
    &\rho(t_{n+1}) = e^{\tau A_{n+1/2}} \rho(t_n)  e^{\tau A_{n+1/2}^{\dag}} 
  + \int_0^{\tau}e^{(\tau-s)A_{n+1/2}}  F(t_n+s, \rho(t_n+s), A_{n+1/2}) e^{(\tau-s)A_{n+1/2}^{\dag}}ds.
\end{align}
\end{subequations}
Approximating the integrals in \eqref{equ2.6} by left-rectangle quadrature formula  and
midpoint quadrature formula, respectively, we get the full-rank exponential midpoint (FREM) scheme
\begin{subequations}\label{equ2.7}
\begin{align}
  &\rho_{n+1/2}=e^{\frac{\tau}{2} A_n} \left( \rho_n + \frac{\tau}{2}\sum_{k=1}^K\gamma_k(t_n) \, L_k \, \rho_n \,L_k^{\dag} \right) e^{\frac{\tau}{2} A_n^{\dag}} ,\label{equ2.7a}\\
  &\rho_{n+1}=e^{\tau A_{n+1/2}} \, \rho_n \, e^{\tau A_{n+1/2}^{\dag}} +
  \tau  \sum_{k=1}^K\gamma_k(t_{n+1/2})e^{\frac{\tau}{2} A_{n+1/2}} \, L_k \, \rho_{n+1/2} \,L_k^{\dag} \, e^{\frac{\tau}{2} A_{n+1/2}^{\dag}},\label{equ2.7b}
\end{align}
\end{subequations}
$n=0,\ldots,N-1$.

The adjoint propagator is derived analogously by writing
\begin{equation}\label{equ2.8}
  \dot{q}(t)=-A^{\dag}_j \, q(t)-q(t) \,A_j-\widetilde{F}(t,q(t),A_j), \qquad q(T)=q_T,
\end{equation}
where
\[\widetilde{F}(t,q(t),A_j)=\sum_{k=1}^K\gamma_k(t) \, L_k^{\dag} \, q(t) \,L_k+(A^{\dag}(t)-A^{\dag}_j)\, q(t)+q(t)(A(t) -A_j).\]
Integrating \eqref{equ2.8} from $t_{n+1}$ to $t$ and applying the variation-of-constants formula, we obtain
\begin{eqnarray}\label{equ2.9}
  q(t)=e^{(t_{n+1}-t)A^{\dag}_j} q(t_{n+1})  e^{(t_{n+1}-t)A_j} +
  \int_{t-t_n}^{\tau}e^{(t_n+s-t)A^{\dag}_j}  \widetilde{F}(t_n+s, q(t_n+s), A_j)  e^{(t_n+s-t)A_j}ds.
\end{eqnarray}
Taking $t=t_{n+1/2}$ (resp. $t=t_{n}$) and $j=n+1$ (resp. $j=n+1/2$) in \eqref{equ2.9}, we get
\begin{subequations}\label{equ2.10}
\begin{align}
    &q(t_{n+1/2})=e^{\frac{\tau}{2} A_{n+1}^{\dag}}  q(t_{n+1})  e^{\frac{\tau}{2} A_{n+1}} 
     + \int_{\frac{\tau}{2}}^{\tau}e^{(s-\frac{\tau}{2})A_{n+1}^{\dag}} \widetilde{F}(t_n+s, q(t_n+s), A_{n+1})  e^{(s-\frac{\tau}{2})A_{n+1}}ds,\\
  &q(t_{n})=e^{\tau A_{n+1/2}^{\dag}}  q(t_{n+1})  e^{\tau A_{n+1/2}} +
  \int_0^{\tau}e^{sA_{n+1/2}^{\dag}}  \widetilde{F}(t_n+s, q(t_n+s), A_{n+1/2}) e^{sA_{n+1/2}}ds.
\end{align}
\end{subequations}
Approximating the integrals in \eqref{equ2.10} by right-rectangle quadrature formula  and
midpoint quadrature formula, respectively, we obtain the FREM scheme for the adjoint Lindblad equation
\begin{subequations}\label{equ2.11}
\begin{align}
  &q_{n+1/2}=e^{\frac{\tau}{2} A_{n+1}^{\dag}} \left( q_{n+1} + \frac{\tau}{2}\sum_{k=1}^K\gamma_k(t_{n+1}) \, L_k^{\dag} \, q_{n+1} \,L_k \right) e^{\frac{\tau}{2} A_{n+1}} ,\label{equ2.11a}\\
  &q_{n}=e^{\tau A_{n+1/2}^{\dag}} \, q_{n+1} \, e^{\tau A_{n+1/2}} +
  \tau \sum_{k=1}^K\gamma_k(t_{n+1/2}) \,e^{\frac{\tau}{2} A_{n+1/2}^{\dag}} \, L_k^{\dag} \, q_{n+1/2} \,L_k \, e^{\frac{\tau}{2} A_{n+1/2}},\label{equ2.11b}
\end{align}
\end{subequations}
$n=N-1,\ldots,0$. Note that $q_n$ and $q_{n+1/2}$ are approximations to $q(t_n)$ and $q(t_{n+1/2})$, respectively.

\begin{rem}
\label{rem:FREM-positivity}
If \(\rho_0\succeq0\) and \(q_T\succeq0\), then the full-rank forward
and adjoint propagators preserve Hermiticity and positive
semidefiniteness for every \(\tau>0\). This follows directly from the
representation of each update as a sum of congruence transformations
of positive-semidefinite matrices.
\end{rem}

In order to be concise, we simply denote the FREM scheme \eqref{equ2.7} (resp. \eqref{equ2.11}) as 
the map $\rho_{n+1}=\Phi(t_n,\rho_n)$ (resp. $q_n=\Psi(t_{n+1},q_{n+1})$). 
Note that the FREM schemes \eqref{equ2.7} might not preserve the unit trace of the 
density matrices. In order to preserve unit trace of the density matrices, 
we propose the normalized FREM scheme
\begin{equation}\label{equ2.12}
  \widetilde{\rho}_{n+1}=\Phi(t_n,\rho_n),\qquad \rho_{n+1}=\frac{\widetilde{\rho}_{n+1}}{\Tr(\widetilde{\rho}_{n+1})},\qquad n=0,\ldots,N-1. 
\end{equation}
We remark that the solution of adjoint Lindblad equation is usually not trace-preserving, 
so the normalization procedure is not adapted to the FREM scheme \eqref{equ2.11}.

\subsection{Low-rank exponential integrators}
Now we consider the low-rank variants of the FREM schemes \eqref{equ2.7} and \eqref{equ2.11}. 
Our aim is to reduce the computational cost while at the same time retain 
the accuracy of the underlying FREM schemes. 
The idea is to seek and do computations on factors $X_n\in \mathbb{C}^{m\times r_n}$ 
(resp. $Y_n\in \mathbb{C}^{m\times \tilde{r}_n}$, $S_n\in \mathbb{C}^{\tilde{r}_n\times \tilde{r}_n}$) 
with $r_n\ll m$ (resp. $\tilde{r}_n\ll m$) instead of $\rho_n$ (resp. $q_n$) such that the solutions 
of the forward and adjoint 
Lindblad equations can be well approximated as
\[
\rho(t_n)\approx X_nX_n^{\dag}:=\varrho_n,\quad q(t_n)\approx Y_nS_nY_n^{\dag}:=p_n,
\]
respectively, where we denote with $\varrho_n$ (resp. $p_n$) the numerical low-rank solution to the forward (resp. adjoint) 
Lindblad equation in order to distinguish it from $\rho_n$ (resp. $q_n$), the full-rank numerical solution of the same equation. The reason that we employ different low-rank approximation factorizations for $\rho(t_n)$ and $q(t_n)$ is that the density matrices are positive semidefinite, while the adjoint matrices may be indefinite, depending on terminal objective.

Now, assume that $\rho_n=X_nX_n^{\dag}$ and $\rho_{n+1/2}=X_{n+1/2}X_{n+1/2}^{\dag}$ and inserting these 
factorizations into \eqref{equ2.7} yields
\begin{subequations}\label{equ2.13}
  \begin{align}
  &G_{n}=\left[\sqrt{\tau\gamma_1(t_n)}L_1X_{n},\ldots,\sqrt{\tau\gamma_K(t_n)}L_KX_{n}\right],\label{equ2.13a}\\
  &X_{n+1/2}=e^{\frac{\tau}{2} A_n}\left[X_n, \sqrt{0.5}G_{n}\right],\label{equ2.13b}\\
  &G_{n+1/2}=\left[\sqrt{\tau\gamma_1(t_{n+1/2})}L_1X_{n+1/2},\ldots,\sqrt{\tau\gamma_K(t_{n+1/2})}L_KX_{n+1/2}\right],\label{equ2.13c}\\
  &X_{n+1}=\left[e^{\tau A_{n+1/2}}X_n, \, e^{\frac{\tau}{2} A_{n+1/2}}G_{n+1/2}\right].\label{equ2.13d}
\end{align}
\end{subequations}
By the notation in \eqref{equ2.13a} we mean that the $K$ matrices $\sqrt{\tau\gamma_k(t_n)}L_kX_{n}$ are placed side by side.

We remark that for many problems, the exact matrix exponential $e^{\frac{\tau}{2} A_n}$ or the exact value of the product of 
matrix exponential times vectors $e^{\frac{\tau}{2} A_n}X_n$  may be costly to compute and approximations may be
required. In our low-rank algorithms we will denote by $\mathfrak{e}^{\frac{\tau}{2} A_n}$ (resp. $\mathfrak{e}^{\frac{\tau}{2} A_n}X_n$) 
an approximation of $e^{\frac{\tau}{2} A_n}$ (resp. $e^{\frac{\tau}{2} A_n}X_n$).

In addition, note that matrices $G_{n}$ and $X_{n+1}$ have much more columns than $X_n$. Better approximations can be obtained by 
applying column compression techniques to these factors. This can be computed by truncating the singular value 
decomposition (SVD) of the given matrix. We denote with $\mathcal{T}_{\varepsilon_1}(\cdot)$ the truncated SVD of a 
matrix with error tolerance $\varepsilon_1>0$ in the sense that $\mathcal{T}_{\varepsilon_1}(X)$ represents the 
best rank $r$ approximation of the matrix $X\in \mathbb{C}^{m\times s}$ in Frobenius norm, where $r$ is the minimal integer 
such that $\sum_{j=r+1}^s\sigma^2_{j}(X)\leq \varepsilon_1$. We then get
\begin{equation}\label{equ2.14}
  \left\|XX^{\dag}-\mathcal{T}_{\varepsilon_1}(X)\mathcal{T}_{\varepsilon_1}(X)^{\dag}\right\|_1=
  \sum_{j=r+1}^s\sigma^2_{j}(X)\leq \varepsilon_1.
\end{equation}

Now, given initial low-rank approximation $\rho_0\approx \varrho_0=X_0X_0^{\dag}$ 
with $X_0\in \mathbb{C}^{m\times r_0}$ and $\Tr(\varrho_0)=1$, 
we define one step of the low-rank exponential midpoint (LREM) scheme as follows
\begin{subequations}\label{equ2.15}
  \begin{align}
  &\widetilde{G}_{n}=\left[\sqrt{\tau\gamma_1(t_n)}L_1X_{n},\ldots,\sqrt{\tau\gamma_K(t_n)}L_KX_{n}\right], \qquad G_n = \mathcal{T}_{\varepsilon_1}\left(\widetilde{G}_n\right),\\
  &\widetilde{X}_{n+1/2}=\mathfrak{e}^{\frac{\tau}{2} A_n}\left[X_n, \sqrt{0.5}G_{n}\right],\qquad X_{n+1/2}=\mathcal{T}_{\varepsilon_1}\left(\widetilde{X}_{n+1/2}\right),\\
  &\widetilde{G}_{n+1/2}=\left[\sqrt{\tau\gamma_1(t_{n+1/2})}L_1X_{n+1/2},\ldots,\sqrt{\tau\gamma_K(t_{n+1/2})}L_KX_{n+1/2}\right],\\
  &G_{n+1/2}=\mathcal{T}_{\varepsilon_1}\left(\widetilde{G}_{n+1/2}\right),\qquad \widetilde{X}_{n+1}=\left[\mathfrak{e}^{\tau A_{n+1/2}}X_n, \, \mathfrak{e}^{\frac{\tau}{2} A_{n+1/2}}G_{n+1/2}\right],\\
  &\widehat{X}_{n+1}=\mathcal{T}_{\varepsilon_1}\left(\widetilde{X}_{n+1}\right),
  \qquad X_{n+1}=\frac{\widehat{X}_{n+1}}{\left\|\widehat{X}_{n+1}\right\|_F},
\end{align}
\end{subequations}
$n=0,\ldots,N-1$.

\begin{rem}
Note that $\varrho_{n+1}=X_{n+1}X_{n+1}^{\dag}$ and it follows that the LREM scheme \eqref{equ2.15} is positivity 
and trace preserving, that is, 
\end{rem}
\begin{equation}\label{equ2.16}
  \|\varrho_{n+1}\|_1=\Tr\left(X_{n+1}X_{n+1}^{\dag}\right)=
  \frac{\Tr\left(\widehat{X}_{n+1}\widehat{X}_{n+1}^{\dag}\right)}{\left\|\widehat{X}_{n+1}\right\|_F^2}=1,\qquad n=0,\ldots,N-1.
\end{equation}

We also remark that the LREM scheme \eqref{equ2.15} is equivalent to
\begin{subequations}\label{equ2.17}
  \begin{align}
  &\widetilde{\varrho}_{n+1}=\Phi(t_n,\varrho_n),\label{equ2.17a}\\
  &\widehat{\varrho}_{n+1}=\widetilde{\varrho}_{n+1}-\vartheta_{n+1},\label{equ2.17b}\\
  & \varrho_{n+1}=\frac{\widehat{\varrho}_{n+1}}{\Tr(\widehat{\varrho}_{n+1})},\label{equ2.17c}
\end{align}
\end{subequations}
$n=0,\ldots,N-1$, where $\widehat{\varrho}_{n+1}=\widehat{X}_{n+1}\widehat{X}_{n+1}^{\dag}$ and the matrix $\vartheta_{n+1}$ can be 
seen as the perturbation caused by the approximations to the matrix exponential times vectors and the column compression procedures.

Now we consider low-rank scheme for the adjoint Lindblad equation. Assume that $q_{n+1}=Y_{n+1}S_{n+1}Y_{n+1}^{\dag}$ and $q_{n+1/2}=Y_{n+1/2}S_{n+1/2}Y_{n+1/2}^{\dag}$ and inserting these factorizations into FREM scheme \eqref{equ2.11} yields
\begin{subequations}\label{equ:adjoint-factorization}
  \begin{align}
  &W_{n+1}=\left[\sqrt{\tau\gamma_1(t_{n+1})}L_1^{\dag}Y_{n+1},\ldots,\sqrt{\tau\gamma_K(t_{n+1})}L_K^{\dag}Y_{n+1}\right], \quad Z_{n+1} = I_K\otimes S_{n+1},\\
  &Y_{n+1/2}=e^{\frac{\tau}{2} A_{n+1}^{\dag}}\left[Y_{n+1}, \sqrt{0.5}W_{n+1}\right],\qquad S_{n+1/2}=\operatorname{diag}\left(S_{n+1},Z_{n+1}\right),\\
  &W_{n+1/2}=\left[\sqrt{\tau\gamma_1(t_{n+1/2})}L_1^{\dag}Y_{n+1/2},\ldots,\sqrt{\tau\gamma_K(t_{n+1/2})}L_K^{\dag}Y_{n+1/2}\right],
  \quad Z_{n+1/2} = I_K\otimes S_{n+1/2},\\
  & Y_{n}=\left[e^{\tau A_{n+1/2}^{\dag}}Y_{n+1}, \, e^{\frac{\tau}{2} A_{n+1/2}^{\dag}}W_{n+1/2}\right],\qquad S_{n}=\operatorname{diag}\left(S_{n+1},Z_{n+1/2}\right),
\end{align}
\end{subequations}
where we use the notation $\operatorname{diag}(\cdot)$ to describe a matrix in block-diagonal form and $I_K$ is the $K\times K$ identity matrix. Similarly, we need to apply column compression techniques to the factors $Y_n$ and $S_n$. Let $Y\in \mathbb{C}^{m\times p}$, and $S\in \mathbb{C}^{p\times p}$ be a Hermitian matrix, 
we denote with $[\widetilde{Y},\widetilde{S}]=\widetilde{\mathcal{T}}_{\varepsilon_1}(Y,S)$ the column compression procedure with error tolerance $\varepsilon_1$ such that
\begin{equation}\label{equ:column-compression}
  \left\|YSY^{\dag}-\widetilde{Y}\widetilde{S}\widetilde{Y}^{\dag}\right\|_1\leq\varepsilon_1,
\end{equation}
where $\widetilde{Y}\in \mathbb{C}^{m\times r}$, $\widetilde{S}\in \mathbb{C}^{r\times r}$ is a Hermitian matrix and $r\leq p$. To implement the column compression procedure \eqref{equ:column-compression}, we first let $Y=VR$ be the thin QR factorization, where $V^{\dag}V=I_p$ and $R\in \mathbb{C}^{p\times p}$. Let $RSR^{\dag}=U\Lambda U^{\dag}$
be the spectral decomposition, where $\Lambda=\operatorname{diag}(\lambda_1,\lambda_2,\ldots,\lambda_p)$ and $|\lambda_1|\geq|\lambda_2|\geq\ldots\geq|\lambda_p|$, then we define $\widetilde{Y}=VU_r$ and $\widetilde{S}=\Lambda_r$, where $U_r$ (resp. $\Lambda_r$) is the submatrix including the first $r$ columns of $U$ (resp. $\Lambda$) and the rank $r$ is chosen such that $\sum_{j=r+1}^p|\lambda_j|\leq\varepsilon_1$. In this setting, we have
\begin{eqnarray*}
  \left\|YSY^{\dag}-\widetilde{Y}\widetilde{S}\widetilde{Y}^{\dag}\right\|_1 &= &
  \left\|VRSR^{\dag}V^{\dag}-VU_r\Lambda_rU_r^{\dag}V^{\dag}\right\|_1\\
  &=&\left\|U\Lambda U^{\dag}-U_r\Lambda_rU_r^{\dag}\right\|_1=\sum_{j=r+1}^p|\lambda_j|\leq\varepsilon_1.
\end{eqnarray*}

Now, given terminal low-rank approximation $q_T\approx p_N=Y_NS_NY_N^{\dag}$ with $Y_N\in \mathbb{C}^{m\times \tilde{r}_N}$ and $S_N\in \mathbb{C}^{\tilde{r}_N\times \tilde{r}_N}$, and applying 
column compression and matrix exponential approximation to \eqref{equ:adjoint-factorization}, we can write the LREM scheme for the adjoint Lindblad equation as follows
\begin{subequations}\label{equ2.18}
  \begin{align}
  &\widetilde{W}_{n+1}=\left[\sqrt{\tau\gamma_1(t_{n+1})}L_1^{\dag}Y_{n+1},\ldots,\sqrt{\tau\gamma_K(t_{n+1})}L_K^{\dag}Y_{n+1}\right], \quad \widetilde{Z}_{n+1} = I_K\otimes S_{n+1},\\
  &\left[W_{n+1},Z_{n+1}\right]=\widetilde{\mathcal{T}}_{\varepsilon_1}\left(\widetilde{W}_{n+1},\widetilde{Z}_{n+1}\right),\\ &\widetilde{Y}_{n+1/2}=\mathfrak{e}^{\frac{\tau}{2} A_{n+1}^{\dag}}\left[Y_{n+1}, \sqrt{0.5}W_{n+1}\right],\qquad \widetilde{S}_{n+1/2}=\operatorname{diag}\left(S_{n+1},Z_{n+1}\right),\\
  & \left[Y_{n+1/2},S_{n+1/2}\right]=\widetilde{\mathcal{T}}_{\varepsilon_1}\left(\widetilde{Y}_{n+1/2},\widetilde{S}_{n+1/2}\right),\\ 
  &\widetilde{W}_{n+1/2}=\left[\sqrt{\tau\gamma_1(t_{n+1/2})}L_1^{\dag}Y_{n+1/2},\ldots,\sqrt{\tau\gamma_K(t_{n+1/2})}L_K^{\dag}Y_{n+1/2}\right],
  \quad \widetilde{Z}_{n+1/2} = I_K\otimes S_{n+1/2},\\
  & \left[W_{n+1/2},Z_{n+1/2}\right]=\widetilde{\mathcal{T}}_{\varepsilon_1}\left(\widetilde{W}_{n+1/2},\widetilde{Z}_{n+1/2}\right),\\ 
  & \widetilde{Y}_{n}=\left[\mathfrak{e}^{\tau A_{n+1/2}^{\dag}}Y_{n+1}, \, \mathfrak{e}^{\frac{\tau}{2} A_{n+1/2}^{\dag}}W_{n+1/2}\right],\qquad \widetilde{S}_{n}=\operatorname{diag}\left(S_{n+1},Z_{n+1/2}\right),\quad \left[Y_{n},S_{n}\right]=\widetilde{\mathcal{T}}_{\varepsilon_1}\left(\widetilde{Y}_{n},\widetilde{S}_{n}\right),
\end{align}
\end{subequations}
$n=N-1,\ldots,0$.

We remark that the LREM scheme \eqref{equ2.18} is equivalent to
\begin{subequations}\label{equ2.20}
  \begin{align}
  &\widetilde{p}_{n}=\Psi(t_{n+1},p_{n+1}),\label{equ2.20a}\\
  &p_{n}=\widetilde{p}_{n}-\theta_{n},\label{equ2.20b}
\end{align}
\end{subequations}
$n=N-1,\ldots,0$, where the matrix $\theta_{n}$ is the perturbation 
due to the approximations to the matrix exponential times vectors and the column compression procedures. 

\begin{rem}
The principal computational advantage of the low-rank exponential midpoint schemes lies in replacing operations on $m\times m$
density matrices by operations on factors of size $m\times r$, where typically $r\ll m$.
Consequently, the storage requirement is reduced from $\mathcal O(m^2)$ to $\mathcal O(mr)$, while the dominant matrix-vector
products involving the matrix exponential are also performed on $m\times r$ matrices instead of full density matrices.
The computational savings become increasingly significant as the Hilbert-space dimension grows.
\end{rem}

\section{Error analysis of the forward problem}\label{sec-err1}
In this section, we perform error analysis of the proposed FREM scheme \eqref{equ2.12} and LREM scheme \eqref{equ2.15} 
for the forward Lindblad equation. In the proofs, we will use the following result.
\begin{lemma}\label{lem3.1}
  (see \citep{Chen}) For any Hermitian matrix $\sigma\in \mathbb{C}^{m\times m}$, it holds that
  \begin{equation*}
    \left\|e^{tA(s)}\sigma e^{tA(s)^{\dag}}\right\|_1\leq \left\|\sigma\right\|_1,\qquad \forall t\geq0,~s\in[0,T].
  \end{equation*}
\end{lemma}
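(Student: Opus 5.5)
The plan is to differentiate the trace norm along the congruence flow and show that it is nonincreasing in $t$. Fix $s\in[0,T]$ and write $A=A(s)=-\mathrm{i}H(s)-\frac12 M$, where
\[
M=\sum_{k}\gamma_k(s)L_k^\dagger L_k\succeq0 .
\]
Set $\sigma(t)=e^{tA}\sigma e^{tA^\dagger}$. Then $\sigma(t)$ is Hermitian for every $t$, and it solves
\[
\dot\sigma(t)=A\sigma(t)+\sigma(t)A^\dagger=-\mathrm{i}[H,\sigma(t)]-\tfrac12\{M,\sigma(t)\}.
\]

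The key structural step is a Jordan decomposition that is itself transported by the flow. Write $\sigma=\sigma_+-\sigma_-$ with $\sigma_\pm\succeq0$ and $\|\sigma\|_1=\Tr\sigma_++\Tr\sigma_-$. Congruence is linear and preserves positive semidefiniteness, so
\[
\sigma(t)=e^{tA}\sigma_+e^{tA^\dagger}-e^{tA}\sigma_-e^{tA^\dagger}
\]
is a difference of two positive semidefinite matrices. The triangle inequality, together with the fact that the trace norm of a positive semidefinite matrix equals its trace, gives
\[
\|\sigma(t)\|_1\le \Tr\bigl(e^{tA}\sigma_+e^{tA^\dagger}\bigr)+\Tr\bigl(e^{tA}\sigma_-e^{tA^\dagger}\bigr).
\]
This reduces the claim to the positive semidefinite case: it suffices to show that $\Tr(e^{tA}\omega e^{tA^\dagger})\le\Tr\omega$ for every $\omega\succeq0$.

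For $\omega\succeq0$, put $f(t)=\Tr(e^{tA}\omega e^{tA^\dagger})$. By cyclicity the commutator term has zero trace, so
\[
f'(t)=-\Tr\bigl(M\,e^{tA}\omega e^{tA^\dagger}\bigr)\le0,
\]
because the trace of a product of two positive semidefinite matrices is nonnegative; this uses $\gamma_k\ge0$. Hence $f(t)\le f(0)=\Tr\omega$ for all $t\ge0$. Applying this with $\omega=\sigma_+$ and $\omega=\sigma_-$ and adding the two bounds gives $\|\sigma(t)\|_1\le\Tr\sigma_++\Tr\sigma_-=\|\sigma\|_1$.

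I do not expect a serious obstacle. The one point needing care is the reduction to the positive semidefinite case: differentiating $\|\sigma(t)\|_1$ directly is awkward because the trace norm is nonsmooth, and the Jordan-decomposition trick avoids this. One could alternatively write $e^{tA^\dagger}e^{tA}\preceq I$ (the same derivative argument shows $e^{tA}$ is a contraction) and use duality, $\|\sigma(t)\|_1=\sup_{\|W\|\le1}|\Tr(W\sigma(t))|$. The decomposition route above is cleaner.
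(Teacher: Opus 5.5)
Your argument is correct and complete. It uses only $A+A^\dagger=-M\preceq0$, which follows from Hermiticity of $H(s)$ and $\gamma_k(s)\ge0$. The Jordan split reduces the trace norm to two traces, and $f'(t)=-\Tr(M\,e^{tA}\omega e^{tA^\dagger})\le0$ shows the congruence flow does not increase the trace of positive semidefinite matrices.

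The paper gives no proof of its own here and simply defers to \citep{Chen}, so there is nothing in the text to match step by step. A standard alternative is the route you mention at the end. From $\frac{d}{dt}\|e^{tA}x\|_2^2=-x^\dagger e^{tA^\dagger}Me^{tA}x\le0$ one gets $\|e^{tA}\|\le1$ in operator norm, and then $\|X\sigma X^\dagger\|_1\le\|X\|^2\|\sigma\|_1$ finishes the estimate.

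Both routes rest on the same dissipativity fact, but they buy different things:
\begin{itemize}
\item The operator-norm route gives contractivity of $e^{tA}$ itself, and the bound for arbitrary, not necessarily Hermitian, $\sigma$.
\item Your route needs $\sigma$ Hermitian, but it proves the more structural statement that any positive, trace-nonincreasing linear map is a trace-norm contraction on $\mathbb{H}_m$, without operator-norm inequalities.
\end{itemize}

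One point is worth adding. Since $A$ enters your proof only through $A+A^\dagger\preceq0$, the same argument with $A$ replaced by $A^\dagger$ covers the adjoint congruence $e^{tA^\dagger}\sigma e^{tA}$. That is the form the paper actually invokes under this lemma's name in Lemmas~\ref{lem4.3} and \ref{lem4.5}, even though the statement as written only covers $e^{tA}\sigma e^{tA^\dagger}$.
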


\subsection{Error estimate of the FREM scheme}
First we perform consistency analysis of the FREM scheme \eqref{equ2.7}. Considering \eqref{equ2.6} 
and using the consistency of the left-rectangle quadrature formula and midpoint quadrature formula, we have
\begin{subequations}\label{equ3.1}
\begin{align}
  &\rho(t_{n+1/2})=e^{\frac{\tau}{2} A_n} \left( \rho(t_n) + \frac{\tau}{2}\sum_{k=1}^K\gamma_k(t_n) \, L_k \, \rho(t_n) \,L_k^{\dag} \right) e^{\frac{\tau}{2} A_n^{\dag}} +\mathcal{O}(\tau^2),\label{equ3.1a}\\
    &\rho(t_{n+1})=e^{\tau A_{n+1/2}}  \rho(t_n)  e^{\tau A_{n+1/2}^{\dag}} +
  \tau \sum_{k=1}^K\gamma_k(t_{n+1/2}) e^{\frac{\tau}{2} A_{n+1/2}}  L_k  \rho(t_{n+1/2}) L_k^{\dag}  e^{\frac{\tau}{2} A_{n+1/2}^{\dag}}+\mathcal{O}(\tau^3).\label{equ3.1b}
\end{align}
\end{subequations}
Inserting \eqref{equ3.1a} into \eqref{equ3.1b} yields
\begin{equation}\label{equ3.2}
  \rho(t_{n+1})=\Phi(t_n,\rho(t_n)) + R_{n+1},
\end{equation}
and the truncation error $R_{n+1}$ satisfies
\begin{equation}\label{equ3.3}
  \max_{0\leq n\leq N-1}\left\|R_{n+1}\right\|_1\leq C_{\varrho} \, \tau^3,
\end{equation}
where the positive constant $C_{\varrho}$ depends on $L_k$, $A(t)$, $\gamma_k(t)$, $\rho(t)$ and their first and second order derivatives.

\begin{lemma}\label{lem3.2}
 For any Hermitian matrices $\rho_n,\varrho_n\in \mathbb{C}^{m\times m}$, it holds that
 \[ \left\|\Phi(t_n,\rho_n)-\Phi(t_n,\varrho_n)\right\|_1\leq \left(1+\tau C_L+\tau^2C_L^2/2\right) \, \left\|\rho_n-\varrho_n\right\|_1,\qquad n=0,\ldots,N-1,\]
 where $C_L:=\sum\limits_{k=1}^K\left(\max\limits_{0\leq t\leq T}\gamma_k(t)\right)\left\|L_k\right\|_1^2$.
\end{lemma}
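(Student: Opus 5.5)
Since the FREM map $\Phi(t_n,\cdot)$ in \eqref{equ2.7} is linear, I will bound $\Phi(t_n,\sigma)$ in the trace norm for an arbitrary Hermitian matrix $\sigma:=\rho_n-\varrho_n$. The estimate then follows with $\sigma$ as the difference. The plan is to expand $\Phi(t_n,\sigma)$ into its constituent congruence terms, bound each term by Lemma~\ref{lem3.1}, and control the jump terms by $\|L_k\sigma L_k^\dagger\|_1\le\|L_k\|_1^2\|\sigma\|_1$.

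Substituting \eqref{equ2.7a} into \eqref{equ2.7b} gives
\[
\Phi(t_n,\sigma)=e^{\tau A_{n+1/2}}\sigma e^{\tau A_{n+1/2}^\dagger}+\tau\sum_{k=1}^K\gamma_k(t_{n+1/2})\,e^{\frac{\tau}{2}A_{n+1/2}}L_k\,\sigma_{n+1/2}\,L_k^\dagger e^{\frac{\tau}{2}A_{n+1/2}^\dagger},
\]
where
\[
\sigma_{n+1/2}=e^{\frac{\tau}{2}A_n}\Bigl(\sigma+\tfrac{\tau}{2}\textstyle\sum_{j}\gamma_j(t_n)L_j\sigma L_j^\dagger\Bigr)e^{\frac{\tau}{2}A_n^\dagger}.
\]
All the intermediate matrices are Hermitian, because each is a sum of congruences of Hermitian matrices. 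Lemma~\ref{lem3.1} therefore applies at every stage.

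The first term is bounded by $\|\sigma\|_1$ directly from Lemma~\ref{lem3.1}. For the midpoint matrix, the triangle inequality, Lemma~\ref{lem3.1}, and the bound $\|L\sigma L^\dagger\|_1\le\|L\|\,\|\sigma\|_1\|L^\dagger\|\le\|L\|_1^2\|\sigma\|_1$ give
\[
\|\sigma_{n+1/2}\|_1\le\bigl(1+\tfrac{\tau}{2}C_L\bigr)\|\sigma\|_1 .
\]
This uses Hölder's inequality together with the fact that the operator norm is dominated by the trace norm. Applying Lemma~\ref{lem3.1} once more to the second term, with the Hermitian matrix $L_k\sigma_{n+1/2}L_k^\dagger$, yields the bound $\tau C_L(1+\tfrac{\tau}{2}C_L)\|\sigma\|_1$.

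Adding the two contributions gives the factor
\[
1+\tau C_L+\tfrac{\tau^2}{2}C_L^2 .
\]
This is exactly the constant in the statement. There is no genuine obstacle here. The only point needing care is to check that each matrix to which Lemma~\ref{lem3.1} is applied is Hermitian, which holds as noted above, and that $\gamma_k(t_n),\gamma_k(t_{n+1/2})\le\max_t\gamma_k(t)$, so that both sums are controlled by the same constant $C_L$.
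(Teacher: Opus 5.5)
Your proposal is correct and follows essentially the same route as the paper: bound the midpoint-stage difference by $(1+\tfrac{\tau}{2}C_L)\|\rho_n-\varrho_n\|_1$ using Lemma~\ref{lem3.1} and $\|L_k\sigma L_k^\dagger\|_1\le\|L_k\|_1^2\|\sigma\|_1$, then insert this into the second stage to obtain the factor $1+\tau C_L+\tau^2C_L^2/2$. The only cosmetic difference is that you use the linearity of $\Phi(t_n,\cdot)$ explicitly to reduce to a single matrix $\sigma=\rho_n-\varrho_n$, whereas the paper subtracts the intermediate stages $\rho_{n+1/2}$ and $\varrho_{n+1/2}$ directly.
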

\begin{proof}
We need to consider a single step of the FREM method \eqref{equ2.7}, applied at $t_n$ to the initial matrices $\rho_n$ and $\varrho_n$. 
We denote the intermediate values by $\rho_{n+1/2}$ and $\varrho_{n+1/2}$, respectively.

Considering the difference of the equations \eqref{equ2.7a} with respect to different initial values $\rho_n$ and $\varrho_n$ 
and using Lemma \ref{lem3.1}, we obtain
  \begin{eqnarray*}
    \|\rho_{n+1/2}-\varrho_{n+1/2}\|_1&\leq & \|\rho_n-\varrho_n\|_1+\frac{\tau }{2} \sum_{k=1}^{K}\gamma_k(t_n)\|L_k(\rho_{n}-\varrho_{n})L_k^{\dag}\|_1\\
    &\leq& (1+\frac{\tau}{2}C_L)\|\rho_n-\varrho_n\|_1.
  \end{eqnarray*}
  Similarly, using \eqref{equ2.7b}, Lemma \ref{lem3.1}, and noting that $\rho_{n+1}=\Phi(t_n,\rho_n)$ and $\varrho_{n+1}=\Phi(t_n,\varrho_n)$, we have
  \begin{eqnarray*}
   \|\Phi(t_n,\rho_n)-\Phi(t_n,\varrho_n)\|_1 &=& \|\rho_{n+1}-\varrho_{n+1}\|_1\\
   &\leq& \|\rho_n-\varrho_n\|_1+\tau \sum_{k=1}^{K}\gamma_k(t_{n+1/2})\|L_k(\rho_{n+1/2}-\varrho_{n+1/2})L_k^{\dag}\|_1\\
    &\leq&\|\rho_n-\varrho_n\|_1+\tau C_L \|\rho_{n+1/2}-\varrho_{n+1/2}\|_1\\
    &=&\left(1+\tau C_L+\tau^2 C_L^2/2\right)\|\rho_{n}-\varrho_{n}\|_1,
  \end{eqnarray*}
  which completes the proof.
\end{proof}

\begin{lemma}\label{lem3.3}
  Let $\sigma$ be Hermitian and positive semidefinite with unit trace, then it holds that
  \[1-C_{\varrho} \, \tau^3\leq\Tr(\Phi(t_n,\sigma))\leq 1+C_{\varrho} \, \tau^3,\qquad n=0,\ldots,N-1.\]
\end{lemma}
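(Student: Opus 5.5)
The idea is to regard $\sigma$ as the value at $t_n$ of an exact Lindblad trajectory and then use the consistency estimate \eqref{equ3.2}--\eqref{equ3.3}, together with the fact that the exact flow preserves the trace. Let $\hat\rho$ be the solution of \eqref{equ2.2} on $[t_n,t_{n+1}]$ with $\hat\rho(t_n)=\sigma$. Since $\sigma\in\mathcal{D}_m$, this is a genuine density-matrix trajectory: it stays Hermitian and positive semidefinite and has unit trace, so $\Tr(\hat\rho(t_{n+1}))=1$. The local expansion \eqref{equ3.1} only uses smoothness of the trajectory on one step, so it applies to $\hat\rho$ and gives
\[
\hat\rho(t_{n+1})=\Phi(t_n,\sigma)+\hat R_{n+1},\qquad \|\hat R_{n+1}\|_1\le C_\varrho\,\tau^3 .
\]

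Taking traces and using $|\Tr(X)|\le\|X\|_1$, we get
\[
|\Tr(\Phi(t_n,\sigma))-1|=|\Tr(\hat R_{n+1})|\le \|\hat R_{n+1}\|_1\le C_\varrho\tau^3,
\]
which is exactly the two-sided bound in the statement. We also note that $\Phi(t_n,\sigma)$ is Hermitian and positive semidefinite by Remark~\ref{rem:FREM-positivity}, so its trace is real and the inequality is meaningful.

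The main obstacle is making sure the constant $C_\varrho$ from \eqref{equ3.3} is uniform over all starting data $\sigma\in\mathcal{D}_m$ and all $n$. That constant depends on the trajectory and its first and second time derivatives. For the trajectory $\hat\rho$ these derivatives are $\mathcal{L}_t(\hat\rho)$, $\mathcal{L}_t^2(\hat\rho)$ and terms involving $\dot A$ and $\dot\gamma_k$. Since $\|\hat\rho(t)\|_1=1$, each of them is bounded by a constant depending only on $L_k$, $A(t)$, $\gamma_k(t)$ and their derivatives. I would therefore read $C_\varrho$ as this data-dependent but $\sigma$-independent constant, enlarging it if necessary so that it dominates the one in \eqref{equ3.3}.

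If the intended reading is instead $\sigma=\rho(t_n)$ for the reference solution, the argument is the same and uses \eqref{equ3.2} directly.
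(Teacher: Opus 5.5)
Your proof is correct and follows essentially the same route as the paper: both start an exact Lindblad trajectory from $\sigma$ at $t_n$, invoke the one-step consistency bound \eqref{equ3.3}, and use trace preservation of the exact flow. The only difference is in the last step. You bound $|\Tr(\hat R_{n+1})|\le\|\hat R_{n+1}\|_1$ directly, whereas the paper applies the reverse triangle inequality to trace norms and then needs positivity of $\Phi(t_n,\sigma)$ to identify $\|\Phi(t_n,\sigma)\|_1$ with $\Tr(\Phi(t_n,\sigma))$. So your version does not actually require positivity, and your remark on the $\sigma$-uniformity of $C_\varrho$ makes explicit an assumption that the paper leaves implicit.
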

\begin{proof}
  Let $\varrho(t)$ be the solution of the Lindblad equation \eqref{equ2.2} with initial condition
  $\varrho(t_n)=\sigma$. It then follows that $\varrho(t)$ is Hermitian and positive semidefinite with 
  unit trace, i.e., $\|\varrho(t)\|_1=\Tr(\varrho(t))=1$ for all $t\geq t_n$. Note that
  $\Phi(t_n,\sigma)$ is the numerical approximation to $\varrho(t_{n+1})$ by using the FREM scheme \eqref{equ2.7} 
  for a single step with exact initial value $\varrho(t_n)$.
  By the consistency \eqref{equ3.2}-\eqref{equ3.3} of the FREM scheme \eqref{equ2.7}, we obtain that
  \[\|\Phi(t_n,\sigma)-\varrho(t_{n+1})\|_1\leq C_{\varrho} \, \tau^3.\]
  Using
 \[ \left|\|\Phi(t_n,\sigma)\|_1-\|\varrho(t_{n+1})\|_1\right|\leq\|\Phi(t_n,\sigma)-\varrho(t_{n+1})\|_1,\]
 we get
 \[\|\varrho(t_{n+1})\|_1-\|\Phi(t_n,\sigma)-\varrho(t_{n+1})\|_1\leq\|\Phi(t_n,\sigma)\|_1\leq
 \|\varrho(t_{n+1})\|_1+\|\Phi(t_n,\sigma)-\varrho(t_{n+1})\|_1.\]
  The desired result then follows from the positivity preserving property of the FREM scheme \eqref{equ2.7} 
  and $\|\varrho(t_{n+1})\|_1=\Tr(\varrho(t_{n+1}))=1$.
\end{proof}

Now we present the error estimate for the numerical solution derived from the unnormalized FREM 
scheme \eqref{equ2.7} for the forward Lindblad equation \eqref{equ2.2}.

\begin{theorem}\label{thm3.1}
  The numerical solution $\rho_n$ generated by the unnormalized FREM scheme \eqref{equ2.7} with $\rho_0=\rho(0)$ 
  satisfies the error estimate
  \[
  \|\rho(t_n)-\rho_n\|_1\leq C_1 \, \tau^2,\qquad 0\leq n\leq N,
  \]
  where $C_1=\frac{C_{\varrho}}{C_L}\left(e^{C_Lt_n}-1\right)$.
\end{theorem}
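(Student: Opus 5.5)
The proof is a standard Lady Windermere's fan argument: combine the local truncation error \eqref{equ3.2}--\eqref{equ3.3} with the one-step stability of Lemma~\ref{lem3.2}. Set $e_n:=\rho(t_n)-\rho_n$, so $e_0=0$.

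\textbf{Error recursion.} Since $\rho_{n+1}=\Phi(t_n,\rho_n)$ and $\rho(t_{n+1})=\Phi(t_n,\rho(t_n))+R_{n+1}$, we write
\[
e_{n+1}=\bigl[\Phi(t_n,\rho(t_n))-\Phi(t_n,\rho_n)\bigr]+R_{n+1}.
\]
Both $\rho(t_n)$ and $\rho_n$ are Hermitian: the first because the exact Lindblad evolution preserves Hermiticity, the second by Remark~\ref{rem:FREM-positivity}. Lemma~\ref{lem3.2} therefore applies, and together with \eqref{equ3.3} it gives
\[
\|e_{n+1}\|_1\le \bigl(1+\tau C_L+\tfrac{\tau^2C_L^2}{2}\bigr)\|e_n\|_1+C_\varrho\tau^3 .
\]

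\textbf{Solving the recursion.} The key observation is that the amplification factor is a truncated exponential series, so
\[
1+\tau C_L+\tfrac{\tau^2C_L^2}{2}\le e^{\tau C_L}.
\]
Hence $\|e_{n+1}\|_1\le e^{\tau C_L}\|e_n\|_1+C_\varrho\tau^3$. Unrolling from $e_0=0$ yields
\[
\|e_n\|_1\le C_\varrho\tau^3\sum_{j=0}^{n-1}e^{j\tau C_L}
= C_\varrho\tau^3\,\frac{e^{n\tau C_L}-1}{e^{\tau C_L}-1}.
\]
Using $e^{\tau C_L}-1\ge \tau C_L$ and $n\tau=t_n$, we obtain
\[
\|e_n\|_1\le \frac{C_\varrho}{C_L}\bigl(e^{C_Lt_n}-1\bigr)\tau^2,
\]
which is exactly the claimed constant $C_1$. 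If $C_L=0$, the bound is read as its limit $C_\varrho t_n\tau^2$.

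\textbf{Main obstacle.} The step that carries real content is not this summation but the validity of the truncation bound \eqref{equ3.3}. There, the $\mathcal O(\tau^2)$ error of the half-step \eqref{equ3.1a} enters \eqref{equ3.1b} multiplied by $\tau$, which is how the local error becomes $\mathcal O(\tau^3)$. The paper takes this as established, so the proof needs only that the constant $C_\varrho$ is uniform in $n$. With that granted, the remaining care point is simply that the amplification factor is bounded by $e^{\tau C_L}$ rather than $1+\tau C_L$, which is what produces the exact form of $C_1$.
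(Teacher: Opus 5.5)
Your proposal is correct and matches the paper's proof: the same error recursion from \eqref{equ3.2}--\eqref{equ3.3} and Lemma~\ref{lem3.2}, unrolled from $e_0=0$ into a geometric sum. You also make explicit two things the paper leaves implicit: the bounds $1+\tau C_L+\tau^2C_L^2/2\le e^{\tau C_L}$ and $e^{\tau C_L}-1\ge\tau C_L$ that produce the stated $C_1$, and the Hermiticity hypothesis needed to apply Lemma~\ref{lem3.2}.
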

\begin{proof}
  Considering the difference between $\rho_{n+1}=\Phi(t_n,\rho_n)$ and \eqref{equ3.2}, 
  and using \eqref{equ3.3} and Lemma \ref{lem3.2}, we obtain
  \begin{eqnarray*}
    \|\rho(t_{n+1})-\rho_{n+1}\|_1&\leq& \|\Phi(t_n,\rho(t_n))-\Phi(t_n,\rho_n)\|_1+\|R_{n+1}\|_1 \\
    &\leq & \left(1+\tau C_L+\tau^2C_L^2/2\right)\|\rho(t_n)-\rho_n\|_1+C_{\varrho}\tau^3.
  \end{eqnarray*}
  By recursion, we obtain
  \begin{eqnarray*}
    \|\rho(t_{n})-\rho_{n}\|_1\leq \left(1+\tau C_L+\tau^2C_L^2/2\right)^n\|\rho(t_0)-\rho_0\|_1
    +C_{\varrho}\tau^3\sum_{j=0}^{n-1}\left(1+\tau C_L+\tau^2C_L^2/2\right)^j.
  \end{eqnarray*}
  Noting that $\rho(t_0)-\rho_0=0$, we have
  \begin{eqnarray*}
    \|\rho(t_{n})-\rho_{n}\|_1\leq \frac{C_{\varrho}}{C_L}\left(e^{C_Lt_n}-1\right)\tau^2,
  \end{eqnarray*}
  and the desired result follows.
\end{proof}

Now we are in the position to prove the convergence of the normalized FREM scheme \eqref{equ2.12}.

\begin{theorem}\label{thm3.2}
    The numerical solution $\rho_n$ generated by the normalized FREM scheme \eqref{equ2.12} with $\rho_0=\rho(0)$ 
    satisfies the error estimate
  \[
  \|\rho(t_n)-\rho_n\|_1\leq 2C_1 \, \tau^2,\qquad 0\leq n\leq N,
  \]
  where the constant $C_1$ is as defined in Theorem \ref{thm3.1}.
  \end{theorem}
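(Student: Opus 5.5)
The plan is to compare the normalized iterate with the exact solution through the unnormalized one-step map. I will write the error recursion so that the normalization contributes only an extra $\mathcal{O}(\tau^3)$ defect per step, and then sum as in Theorem~\ref{thm3.1}.

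\textbf{Step 1: positivity and unit trace of the iterates.} Write $\rho_{n+1}=\widetilde\rho_{n+1}/\Tr(\widetilde\rho_{n+1})$ with $\widetilde\rho_{n+1}=\Phi(t_n,\rho_n)$. By Remark~\ref{rem:FREM-positivity} and induction, every $\rho_n$ is Hermitian, positive semidefinite and of unit trace. Lemma~\ref{lem3.3} then applies with $\sigma=\rho_n$ and gives
\[
|\Tr(\widetilde\rho_{n+1})-1|\leq C_\varrho\tau^3 .
\]
Since $\widetilde\rho_{n+1}\succeq0$, we have $\|\widetilde\rho_{n+1}\|_1=\Tr(\widetilde\rho_{n+1})$. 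Hence
\[
\|\rho_{n+1}-\widetilde\rho_{n+1}\|_1=\bigl|1-\Tr(\widetilde\rho_{n+1})\bigr|\,\frac{\|\widetilde\rho_{n+1}\|_1}{\Tr(\widetilde\rho_{n+1})}=|1-\Tr(\widetilde\rho_{n+1})|\leq C_\varrho\tau^3 .
\]
So normalization is a perturbation of the same size as the local truncation error $R_{n+1}$ in \eqref{equ3.3}.

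\textbf{Step 2: error recursion.} Using \eqref{equ3.2} and the triangle inequality,
\[
\|\rho(t_{n+1})-\rho_{n+1}\|_1\leq \|\Phi(t_n,\rho(t_n))-\Phi(t_n,\rho_n)\|_1+\|R_{n+1}\|_1+\|\widetilde\rho_{n+1}-\rho_{n+1}\|_1 .
\]
Applying Lemma~\ref{lem3.2} to the first term and Step 1 to the last gives
\[
e_{n+1}\leq\left(1+\tau C_L+\tau^2C_L^2/2\right)e_n+2C_\varrho\tau^3,
\]
where $e_n=\|\rho(t_n)-\rho_n\|_1$ and $e_0=0$.

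\textbf{Step 3: summation.} Unrolling the recursion exactly as in the proof of Theorem~\ref{thm3.1}, with $C_\varrho$ replaced by $2C_\varrho$, and using $(1+x+x^2/2)\leq e^x$ to bound the geometric sum, gives $e_n\leq 2C_1\tau^2$.

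The main obstacle is Step 1. It must be shown that the normalization defect is third order and not just second order, since a second-order defect would cost a power of $\tau$ after summation. This is exactly what Lemma~\ref{lem3.3} provides, but it needs the input $\sigma=\rho_n$ to be a genuine density matrix. The induction on positivity and unit trace must therefore be carried alongside the error recursion. It should also be checked that $\Tr(\widetilde\rho_{n+1})>0$, which holds for $\tau$ small enough by the same lemma, so that the normalization is well defined.
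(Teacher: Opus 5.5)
Your proof is correct and is essentially the paper's argument. The paper splits the error through the unnormalized FREM trajectory $\widehat\rho_n$ and runs two recursions, each with defect $C_\varrho\tau^3$, one of them being Theorem~\ref{thm3.1}; you merge these into a single recursion with defect $2C_\varrho\tau^3$, using the same ingredients (Lemma~\ref{lem3.2}, Lemma~\ref{lem3.3}, and $\|\widetilde\rho_{n+1}-\rho_{n+1}\|_1=|\Tr(\widetilde\rho_{n+1})-1|$) and obtaining the same constant $2C_1$. As a minor point, $\Tr(\widetilde\rho_{n+1})>0$ holds for every $\tau>0$, not only for small $\tau$: $e^{\tau A_{n+1/2}}$ is invertible, so the summand $e^{\tau A_{n+1/2}}\rho_n e^{\tau A_{n+1/2}^{\dagger}}$ is a nonzero positive semidefinite matrix.
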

  \begin{proof}
    We denote $\widehat{\rho}_{n+1}=\Phi(t_n,\widehat{\rho}_n)$ with $\widehat{\rho}_0=\rho(0)$, that means that $\widehat{\rho}_n$ is 
    the numerical solution generated by the unnormalized FREM scheme \eqref{equ2.7}.
    By Theorem \ref{thm3.1}, we have
   \begin{eqnarray*}
     \|\rho(t_{n+1})-\rho_{n+1}\|_1\leq\|\rho(t_{n+1})-\widehat{\rho}_{n+1}\|_1+\|\widehat{\rho}_{n+1}-\rho_{n+1}\|_1
     \leq C_1\tau^2+\|\widehat{\rho}_{n+1}-\rho_{n+1}\|_1.
   \end{eqnarray*}
   Using Lemmas \ref{lem3.2} and \ref{lem3.3} and noting that $\rho_n\geq0$ and $\|\rho_n\|_1=1$, we obtain
   \begin{eqnarray*}
     \|\widehat{\rho}_{n+1}-\rho_{n+1}\|_1&\leq&\|\widehat{\rho}_{n+1}-\widetilde{\rho}_{n+1}\|_1+\|\widetilde{\rho}_{n+1}-\rho_{n+1}\|_1\\
     &=& \|\Phi(t_n,\widehat{\rho}_n)-\Phi(t_n,\rho_n)\|_1+\|\rho_{n+1}(\Tr(\widetilde{\rho}_{n+1})-1)\|_1\\
     &\leq& \left(1+\tau C_L+\tau^2C_L^2/2\right)\|\widehat{\rho}_n-\rho_n\|_1+|\Tr(\Phi(t_n,\rho_n))-1|\cdot\|\rho_{n+1}\|_1\\
     &\leq&\left(1+\tau C_L+\tau^2C_L^2/2\right)\|\widehat{\rho}_n-\rho_n\|_1+C_{\varrho}\tau^3.
   \end{eqnarray*}
   By recursion and noting that $\widehat{\rho}_0=\rho_0=\rho(0)$, we obtain
  \begin{eqnarray*}
    \|\widehat{\rho}_{n+1}-\rho_{n+1}\|_1&\leq&\left(1+\tau C_L+\tau^2C_L^2/2\right)^{n+1}\|\hat{\rho}_0-\rho_0\|_1
    + C_{\varrho}\tau^3\sum_{j=0}^{n}\left(1+\tau C_L+\tau^2C_L^2/2\right)^{j}\\
    &\leq& C_1\tau^2,
  \end{eqnarray*}
  which completes the proof.
  \end{proof}

\subsection{Error estimate of the LREM scheme}
Now we consider error estimate of the proposed LREM scheme \eqref{equ2.15} for the forward Lindblad equation. 
First, we analyze the bound of perturbation $\vartheta_{n+1}$ (defined in \eqref{equ2.17b}) in the following lemma.

\begin{lemma}\label{lem3.4}
 Let $\varepsilon_1>0$ be the error tolerance of the column compression algorithm used in the LREM scheme \eqref{equ2.15}. 
 Assume that the matrix-exponential approximation used in the LREM scheme \eqref{equ2.15} satisfies 
 \[\left\|e^{\mu\tau A_{\nu}}\sigma e^{\mu\tau A_{\nu}^{\dag}}-\mathfrak{e}^{\mu\tau A_{\nu}}\sigma \mathfrak{e}^{\mu\tau A_{\nu}^{\dag}}\right\|_1\leq C_e\varepsilon_2,\] 
 for
\[
\mu\in\left\{\frac12,1\right\},
\qquad
\nu\in
\left\{
n, n+\frac12
\right\},
\qquad
n=0,\ldots,N-1,
\]
and for every Hermitian
\(\sigma\in\mathbb{C}^{m\times m}\),
 where $\varepsilon_2>0$ is the 
 corresponding error tolerance and $C_e>0$ is the error constant. Then it holds that
  \begin{equation*}
  \|\vartheta_{n+1}\|_1\leq \widetilde{c}_1 \, \varepsilon_1 + \widetilde{c}_2  \, \varepsilon_2,\qquad n=0,1,\ldots,N-1,
\end{equation*}
where $\widetilde{c}_1=2+3C_L\tau/2$ and $\widetilde{c}_2=3C_e+C_eC_L\tau(3+\tau)/2$.
\end{lemma}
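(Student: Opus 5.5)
The plan is to compare the exact one-step FREM map $\Phi(t_n,\varrho_n)$ with the low-rank product $\widehat X_{n+1}\widehat X_{n+1}^{\dag}$, both built from the same input $\varrho_n=X_nX_n^{\dag}$. Here $\vartheta_{n+1}=\Phi(t_n,\varrho_n)-\widehat X_{n+1}\widehat X_{n+1}^{\dag}$. I will work at the level of the ``Gram matrices'' $GG^{\dag}$ and $XX^{\dag}$ rather than the factors, so that every column-compression error is controlled directly by \eqref{equ2.14} in trace norm. Every exponential error is controlled by the assumed bound $C_e\varepsilon_2$. Contractivity of the exact exponentials (Lemma~\ref{lem3.1}) prevents amplification of errors already present, and the Lipschitz constant $C_L$ of $\sigma\mapsto\tau\sum_k\gamma_k L_k\sigma L_k^{\dag}$ (as in Lemma~\ref{lem3.2}) governs how errors enter the dissipative terms.

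\emph{Step 1 (midpoint stage).} Let $\rho_{n+1/2}^{\mathrm{ex}}=e^{\frac{\tau}{2}A_n}\bigl(\varrho_n+\tfrac12\widetilde G_n\widetilde G_n^{\dag}\bigr)e^{\frac{\tau}{2}A_n^{\dag}}$ be the exact FREM midpoint value; note $\widetilde G_n\widetilde G_n^{\dag}=\tau\sum_k\gamma_k(t_n)L_k\varrho_nL_k^{\dag}$. I insert intermediate terms in the following order:
\begin{itemize}
\item replace $\widetilde G_n$ by $G_n$, which costs at most $\tfrac12\varepsilon_1$ by \eqref{equ2.14} and Lemma~\ref{lem3.1};
\item replace $e$ by $\mathfrak e$, which costs $C_e\varepsilon_2$;
\item apply the truncation $\mathcal T_{\varepsilon_1}$, which costs $\varepsilon_1$.
\end{itemize}
This gives
\[
E_{1/2}:=\bigl\|\rho_{n+1/2}^{\mathrm{ex}}-X_{n+1/2}X_{n+1/2}^{\dag}\bigr\|_1\le \tfrac32\varepsilon_1+C_e\varepsilon_2 .
\]

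\emph{Step 2 (full step).} The exact second stage is $e^{\tau A_{n+1/2}}\varrho_ne^{\tau A_{n+1/2}^{\dag}}+e^{\frac{\tau}{2}A_{n+1/2}}\bigl(\tau\sum_k\gamma_kL_k\rho_{n+1/2}^{\mathrm{ex}}L_k^{\dag}\bigr)e^{\frac{\tau}{2}A_{n+1/2}^{\dag}}$. I compare it with $\widehat X_{n+1}\widehat X_{n+1}^{\dag}$ through a telescoping chain:
\begin{itemize}
\item replace $\rho_{n+1/2}^{\mathrm{ex}}$ by $X_{n+1/2}X_{n+1/2}^{\dag}$ inside the dissipator, which costs $\tau C_LE_{1/2}$ by Lemma~\ref{lem3.1} and the definition of $C_L$;
\item replace $\widetilde G_{n+1/2}$ by $G_{n+1/2}$, which costs $\varepsilon_1$;
\item replace the two exact exponentials by $\mathfrak e$, which costs $C_e\varepsilon_2$ for each term;
\item apply the final truncation producing $\widehat X_{n+1}$, which costs $\varepsilon_1$.
\end{itemize}
Since $\widetilde X_{n+1}\widetilde X_{n+1}^{\dag}$ is exactly the sum of the two Gram terms, the splitting is additive. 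Summing gives the $\varepsilon_1$-coefficient $2+\tfrac32 C_L\tau=\widetilde c_1$, together with an $\varepsilon_2$-coefficient of the form $C_e(2+C_L\tau)$ plus lower-order terms.

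\emph{Main obstacle.} The delicate point is the bookkeeping of the $\varepsilon_2$ contributions. The approximate exponential $\mathfrak e$ is not contractive, so an error that has already passed through $\mathfrak e$ cannot be bounded by Lemma~\ref{lem3.1}. I will handle this by always arranging the telescoping so that exact exponentials act on perturbed data and the $\mathfrak e$-error is applied only once to each Gram matrix. This is permissible because the hypothesis holds for every Hermitian $\sigma$. Cross terms in which an $\mathfrak e$-error is followed by the dissipator and a second exponential error are then absorbed by $\tau C_L$ times an extra $C_e\varepsilon_2$, together with the $\tau^2$-contribution from $C_L^2\tau^2$-type factors. These give the remaining terms $C_e+\tfrac12C_eC_L\tau(1+\tau)$, so the coefficient stays within $\widetilde c_2=3C_e+C_eC_L\tau(3+\tau)/2$, which dominates the sum collected above.
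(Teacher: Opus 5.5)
Your proof is correct. Its skeleton matches the paper's: a midpoint-stage defect is propagated through the dissipator with factor $\tau C_L$, the defects of the full step are added, and each piece is controlled by \eqref{equ2.14}, the exponential hypothesis, or Lemma~\ref{lem3.1}. The difference lies in the order in which you insert intermediate terms, and this changes the constant.

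In its terms $I_3$ and $I_4$, the paper first swaps $e^{\frac{\tau}{2}A_{n+1/2}}$ for $\mathfrak{e}^{\frac{\tau}{2}A_{n+1/2}}$ acting on $\widetilde G_{n+1/2}\widetilde G_{n+1/2}^{\dag}$. Only afterwards does it replace $\widetilde G_{n+1/2}$ by $G_{n+1/2}$, now under the \emph{approximate} exponential. Since $\mathfrak e$ is not contractive, this requires the auxiliary bound $\|\mathfrak e\sigma\mathfrak e^{\dag}\|_1\le C_e\varepsilon_2+\|\sigma\|_1$, which costs an extra $C_e\varepsilon_2$.

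The paper uses the same ordering at the midpoint. There it also splits the exponential error into a $\varrho_n$-part and a $\tau\widehat F(t_n,\varrho_n)$-part. It arrives at $\|\widetilde\varrho_{n+1/2}-X_{n+1/2}X_{n+1/2}^{\dag}\|_1\le\frac32\varepsilon_1+\bigl(\frac32+\frac{\tau}{2}\bigr)C_e\varepsilon_2$, instead of your $\frac32\varepsilon_1+C_e\varepsilon_2$.

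You perform every compression replacement under the exact exponential, where Lemma~\ref{lem3.1} applies. You also apply the hypothesis once to each full Hermitian argument. This gives
\[
\|\vartheta_{n+1}\|_1\le\Bigl(2+\tfrac32C_L\tau\Bigr)\varepsilon_1+C_e(2+C_L\tau)\,\varepsilon_2 .
\]
This is sharper than the statement and implies it, since $C_e(2+C_L\tau)\le 3C_e+C_eC_L\tau(3+\tau)/2$.

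The gap $C_e+\frac12C_eC_L\tau(1+\tau)$ that you compute is exactly the slack produced by the paper's ordering. However, your last paragraph's account of it, in terms of ``cross terms'' and ``$C_L^2\tau^2$-type factors'', does not describe anything in your own chain. Your chain has no further terms to absorb, and no $C_L^2$ appears in $\widetilde c_2$. You should replace that paragraph with the one-line comparison of constants above.
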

\begin{proof}
  Let us first define
  \begin{equation*}
    \varphi_{n+1}=\mathfrak{e}^{\tau A_{n+1/2}}\varrho_n\mathfrak{e}^{\tau A_{n+1/2}^{\dag}}+
    \mathfrak{e}^{\frac{\tau}{2} A_{n+1/2}}G_{n+1/2}G_{n+1/2}^{\dag}\mathfrak{e}^{\frac{\tau}{2} A_{n+1/2}^{\dag}}.
  \end{equation*}
  By the definition of $\vartheta_{n+1}$, we have
  \begin{eqnarray}\label{equ3.4}
    \|\vartheta_{n+1}\|_1=\|\widetilde{\varrho}_{n+1}-\varphi_{n+1}+
    \varphi_{n+1}-\widehat{\varrho}_{n+1}\|_1
    \leq\|\widetilde{\varrho}_{n+1}-\varphi_{n+1}\|_1
    +\|\varphi_{n+1}-\widehat{\varrho}_{n+1}\|_1.
  \end{eqnarray}
  Note from \eqref{equ2.15} and \eqref{equ2.17} that $\varphi_{n+1}=\widetilde{X}_{n+1}\widetilde{X}_{n+1}^{\dag}$, $\widehat{\varrho}_{n+1}=\widehat{X}_{n+1}\widehat{X}_{n+1}^{\dag}$ and
  $\widehat{X}_{n+1}= \mathcal{T}_{\varepsilon_1}(\widetilde{X}_{n+1})$. It then follows that
  \begin{equation}\label{equ3.5}
    \|\varphi_{n+1}-\widehat{\varrho}_{n+1}\|_1=
    \left\|\widetilde{X}_{n+1}\widetilde{X}_{n+1}^{\dag}-\mathcal{T}_{\varepsilon_1}(\widetilde{X}_{n+1})
    \mathcal{T}_{\varepsilon_1}(\widetilde{X}_{n+1})^{\dag}\right\|_1\leq \varepsilon_1.
  \end{equation}

  Note from \eqref{equ2.17a} and \eqref{equ2.7} that
  \begin{equation*}
    \widetilde{\varrho}_{n+1}=e^{\tau A_{n+1/2}}\varrho_ne^{\tau A_{n+1/2}^{\dag}}
    +\tau e^{\frac{\tau}{2} A_{n+1/2}}\widehat{F}(t_{n+1/2},\widetilde{\varrho}_{n+1/2})e^{\frac{\tau}{2} A_{n+1/2}^{\dag}},
  \end{equation*}
  where
  \[\widetilde{\varrho}_{n+1/2}=e^{\frac{\tau}{2} A_n}\left(\varrho_n+\frac{\tau}{2} \widehat{F}(t_n,\varrho_n)\right)e^{\frac{\tau}{2} A_n^{\dag}},\quad \widehat{F}(t,\rho)=\sum_{k=1}^K\gamma_k(t) \, L_k \, \rho \,L_k^{\dag}.\]
  It follows that
  \begin{eqnarray}\label{equ3.6}
    \|\widetilde{\varrho}_{n+1}-\varphi_{n+1}\|_1&\leq& \underbrace{\left\|e^{\tau A_{n+1/2}}\varrho_ne^{\tau A_{n+1/2}^{\dag}}-
    \mathfrak{e}^{\tau A_{n+1/2}}\varrho_n\mathfrak{e}^{\tau A_{n+1/2}^{\dag}}\right\|_1}_{:=I_1} \nonumber\\
     &&+ \underbrace{
    \left\|e^{\frac{\tau}{2} A_{n+1/2}}\left(\tau\widehat{F}(t_{n+1/2},\widetilde{\varrho}_{n+1/2})-\widetilde{G}_{n+1/2}\widetilde{G}_{n+1/2}^{\dag}\right)e^{\frac{\tau}{2} A_{n+1/2}^{\dag}}\right\|_1}_{:=I_2} \nonumber \\
    &&+\underbrace{\left\|e^{\frac{\tau}{2} A_{n+1/2}}\widetilde{G}_{n+1/2}\widetilde{G}_{n+1/2}^{\dag}e^{\frac{\tau}{2} A_{n+1/2}^{\dag}}-\mathfrak{e}^{\frac{\tau}{2} A_{n+1/2}}\widetilde{G}_{n+1/2}\widetilde{G}_{n+1/2}^{\dag}\mathfrak{e}^{\frac{\tau}{2} A_{n+1/2}^{\dag}}\right\|_1}_{:=I_3}\nonumber\\
    &&+\underbrace{\left\|\mathfrak{e}^{\frac{\tau}{2} A_{n+1/2}}\left(\widetilde{G}_{n+1/2}\widetilde{G}_{n+1/2}^{\dag}-G_{n+1/2}G_{n+1/2}^{\dag}\right)\mathfrak{e}^{\frac{\tau}{2} A_{n+1/2}^{\dag}}\right\|_1}_{:=I_4}.
  \end{eqnarray}

  By the assumption on the matrix exponential algorithm, we obtain
\begin{equation}\label{equ3.7}
  I_1\leq C_e \, \varepsilon_2, \quad I_3\leq C_e \, \varepsilon_2.
\end{equation}
Using the following inequality
\begin{eqnarray*}
  \left\|\mathfrak{e}^{\frac{\tau}{2} A_{n+1/2}}\sigma\mathfrak{e}^{\frac{\tau}{2} A_{n+1/2}^{\dag}}\right\|_1
  &\leq& \left\|\mathfrak{e}^{\frac{\tau}{2} A_{n+1/2}}\sigma\mathfrak{e}^{\frac{\tau}{2} A_{n+1/2}^{\dag}}-
  e^{\frac{\tau}{2} A_{n+1/2}}\sigma e^{\frac{\tau}{2} A_{n+1/2}^{\dag}}\right\|_1 
  +\left\|e^{\frac{\tau}{2} A_{n+1/2}}\sigma e^{\frac{\tau}{2} A_{n+1/2}^{\dag}}\right\|_1\\
  &\leq& C_e\varepsilon_2+\|\sigma\|_1,
\end{eqnarray*}
and noting that $G_{n+1/2}=\mathcal{T}_{\varepsilon_1}(\widetilde{G}_{n+1/2})$, we obtain
\begin{equation}\label{equ3.8}
  I_4\leq C_e\varepsilon_2+\left\|\widetilde{G}_{n+1/2}\widetilde{G}_{n+1/2}^{\dag}-G_{n+1/2}G_{n+1/2}^{\dag}\right\|_1
  \leq C_e\varepsilon_2+\varepsilon_1.
\end{equation}

Note from \eqref{equ2.15} that $\widetilde{G}_{n+1/2}\widetilde{G}_{n+1/2}^{\dag}=\tau \widehat{F}(t_{n+1/2},X_{n+1/2}X_{n+1/2}^{\dag})$ 
and $X_{n+1/2}=\mathcal{T}_{\varepsilon_1}(\widetilde{X}_{n+1/2})$, we have
\begin{eqnarray}\label{equ3.9}
  I_2&\leq& \tau \left\|\widehat{F}(t_{n+1/2},\widetilde{\varrho}_{n+1/2})-\widehat{F}(t_{n+1/2},X_{n+1/2}X_{n+1/2}^{\dag})\right\|_1 \nonumber \\
  &\leq& \tau C_L\left\|\widetilde{\varrho}_{n+1/2}-X_{n+1/2}X_{n+1/2}^{\dag}\right\|_1\nonumber \\
  &\leq& \tau C_L\left\|\widetilde{\varrho}_{n+1/2}-\widetilde{X}_{n+1/2}\widetilde{X}_{n+1/2}^{\dag}\right\|_1
  +\tau C_L\left\|\widetilde{X}_{n+1/2}\widetilde{X}_{n+1/2}^{\dag}-X_{n+1/2}X_{n+1/2}^{\dag}\right\|_1\nonumber \\
  &\leq& \tau C_L\left\|e^{\frac{\tau}{2} A_n}\left(\varrho_n+\frac{\tau}{2} \widehat{F}(t_n,\varrho_n)\right)e^{\frac{\tau}{2} A_n^{\dag}}-
  \mathfrak{e}^{\frac{\tau}{2} A_n}\left(\varrho_n+\frac{1}{2} G_nG_n^{\dag}\right)\mathfrak{e}^{\frac{\tau}{2} A_n^{\dag}}\right\|_1+\tau C_L\varepsilon_1\nonumber \\
  &\leq&\frac{\tau C_L}{2}\left\|\tau e^{\frac{\tau}{2} A_n}\widehat{F}(t_n,\varrho_n)e^{\frac{\tau}{2} A_n^{\dag}}-\mathfrak{e}^{\frac{\tau}{2} A_n} G_nG_n^{\dag}\mathfrak{e}^{\frac{\tau}{2} A_n^{\dag}}\right\|_1 +\tau C_L(C_e\varepsilon_2+\varepsilon_1)\nonumber \\
  &\leq& \frac{\tau C_L}{2}\left\|\tau e^{\frac{\tau}{2} A_n}\widehat{F}(t_n,\varrho_n)e^{\frac{\tau}{2} A_n^{\dag}}-\tau\mathfrak{e}^{\frac{\tau}{2} A_n} \hat{F}(t_n,\varrho_n)\mathfrak{e}^{\frac{\tau}{2} A_n^{\dag}}\right\|_1 \nonumber\\
  &&+\frac{\tau C_L}{2}\left\|\mathfrak{e}^{\frac{\tau}{2} A_n} \left(\widetilde{G}_n\widetilde{G}_n^{\dag}-G_nG_n^{\dag}\right)\mathfrak{e}^{\frac{\tau}{2} A_n^{\dag}}\right\|_1+\tau C_L(C_e\varepsilon_2+\varepsilon_1)\nonumber \\
  &\leq& \frac{1}{2}\tau^2C_LC_e\varepsilon_2
  +\frac{1}{2}\tau C_L(C_e\varepsilon_2+\varepsilon_1)+\tau C_L(C_e\varepsilon_2+\varepsilon_1).
\end{eqnarray}
The desired result follows from \eqref{equ3.4}-\eqref{equ3.9}.
\end{proof}

Now we derive the error estimate of the LREM scheme \eqref{equ2.15}.
We assume that the initial low-rank approximation satisfies
\[\|\rho_0-\varrho_0\|_1\leq\delta,\]
for some $\delta>0$.

\begin{theorem}\label{thm3.3}
 Assume that the error tolerances of the column compression and matrix exponential algorithm satisfy $\varepsilon_1=\tau\epsilon_1$ 
 and $\varepsilon_2=\tau\epsilon_2$ for some $\epsilon_1,~\epsilon_2>0$, respectively. Let $\rho(t)$ be the solution of the 
 Lindblad equation \eqref{equ2.2} and $\{\varrho_n\}_{n=0}^N$ be the numerical solution generated by the LREM scheme \eqref{equ2.15}. 
 Then it holds that
  \[
  \|\rho(t_n)-\varrho_n\|_1\leq c_1 \, \tau^2+c_2 \, \delta+c_3 \, \epsilon_1+c_4 \, \epsilon_2,\qquad 0\leq n\leq N,
  \]
  where $c_1=C_{\varrho}(e^{C_Lt_n}-1)/C_L+C_1$, $c_2=e^{C_L t_{n}}$, $c_3=2\widetilde{c}_1(e^{C_Lt_n}-1)/C_L$ and $c_4=2\widetilde{c}_2(e^{C_Lt_n}-1)/C_L$.
\end{theorem}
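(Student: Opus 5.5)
The plan is to split the error into a full-rank part and a perturbation part. Let $\rho_n$ denote the normalized FREM solution \eqref{equ2.12} with $\rho_0=\rho(0)$. The triangle inequality gives
\[
\|\rho(t_n)-\varrho_n\|_1\leq\|\rho(t_n)-\rho_n\|_1+\|\rho_n-\varrho_n\|_1 .
\]
Theorem~\ref{thm3.2} bounds the first term by $2C_1\tau^2$, which accounts for part of $c_1\tau^2$. An equivalent route, closer to the stated constant $c_1=C_\varrho(e^{C_Lt_n}-1)/C_L+C_1$, compares $\varrho_{n+1}$ directly with $\rho(t_{n+1})=\Phi(t_n,\rho(t_n))+R_{n+1}$. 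In that route the truncation error $R_{n+1}$ from \eqref{equ3.2}--\eqref{equ3.3} and the normalization defect are each absorbed into one $C_\varrho\tau^3$ term per step. I would use this second route, since it yields the constants directly.

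The one-step recursion uses the representation \eqref{equ2.17}. We have
\[
\varrho_{n+1}-\Phi(t_n,\varrho_n)=-\vartheta_{n+1}+\widehat\varrho_{n+1}\Bigl(\tfrac{1}{\Tr(\widehat\varrho_{n+1})}-1\Bigr).
\]
Since $\widehat\varrho_{n+1}\succeq0$, the last term has trace norm $|1-\Tr(\widehat\varrho_{n+1})|$. I would bound this by
\[
|1-\Tr(\Phi(t_n,\varrho_n))|+|\Tr(\vartheta_{n+1})|\leq C_\varrho\tau^3+\|\vartheta_{n+1}\|_1,
\]
using Lemma~\ref{lem3.3}. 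That lemma applies because $\varrho_n\succeq0$ with unit trace by \eqref{equ2.16}, and it requires $\varrho_0$ to have unit trace, which is assumed. Hence
\[
\|\varrho_{n+1}-\Phi(t_n,\varrho_n)\|_1\leq 2\|\vartheta_{n+1}\|_1+C_\varrho\tau^3\leq 2\widetilde c_1\varepsilon_1+2\widetilde c_2\varepsilon_2+C_\varrho\tau^3
\]
by Lemma~\ref{lem3.4}; the factor $2$ in $c_3,c_4$ comes from here.

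Combining this with the stability estimate of Lemma~\ref{lem3.2} and the consistency \eqref{equ3.2}, and setting $e_n=\|\rho(t_n)-\varrho_n\|_1$, gives
\[
e_{n+1}\leq(1+\tau C_L+\tau^2C_L^2/2)\,e_n+\|R_{n+1}\|_1+C_\varrho\tau^3+2\widetilde c_1\tau\epsilon_1+2\widetilde c_2\tau\epsilon_2,
\]
with $\varepsilon_i=\tau\epsilon_i$ substituted. Here $1+\tau C_L+\tau^2C_L^2/2\leq e^{\tau C_L}$. Unrolling as in Theorem~\ref{thm3.1} and using $\tau\sum_{j<n}e^{jC_L\tau}\leq(e^{C_Lt_n}-1)/C_L$ (up to the standard Riemann-sum comparison, as already done in Theorem~\ref{thm3.1}) yields the following contributions:
\begin{itemize}
\item the initial error contributes $e^{C_Lt_n}\delta$;
\item the two $\tau^3$ sources contribute $C_1\tau^2$ and $C_\varrho(e^{C_Lt_n}-1)\tau^2/C_L$;
\item the perturbations contribute $c_3\epsilon_1+c_4\epsilon_2$.
\end{itemize}

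The delicate step is the normalization defect. One must check that Lemma~\ref{lem3.3} applies to the low-rank iterate $\varrho_n$ rather than to the exact solution, and that dividing by $\Tr(\widehat\varrho_{n+1})$ does not amplify errors. The latter holds because the positive semidefiniteness of $\widehat\varrho_{n+1}$ makes the rescaling error exactly $|1-\Tr\widehat\varrho_{n+1}|$ in trace norm. No lower bound on the trace is needed, since we bound $\|\varrho_{n+1}-\widehat\varrho_{n+1}\|_1=|1-\Tr\widehat\varrho_{n+1}|$ directly.
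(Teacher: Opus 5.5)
Your proof is correct and gives exactly the stated constants, but it is organized differently from the paper's.

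The paper splits $\rho(t_{n+1})-\varrho_{n+1}$ into three pieces using two auxiliary unnormalized FREM trajectories: $\rho_{n+1}=\Phi(t_n,\rho_n)$ started from $\rho_0$, and $\widecheck{\rho}_{n+1}=\Phi(t_n,\widecheck{\rho}_n)$ started from $\varrho_0$. The three pieces are:
\begin{itemize}
\item the FREM global error, which Theorem~\ref{thm3.1} bounds by $C_1\tau^2$;
\item the propagated initial low-rank error, which Lemma~\ref{lem3.2} bounds by $e^{C_Lt_n}\delta$;
\item the LREM-versus-FREM defect from the common datum $\varrho_0$. Its one-step recursion carries exactly your source $2\|\vartheta_{n+1}\|_1+C_\varrho\tau^3$ and yields the remaining $C_\varrho(e^{C_Lt_n}-1)\tau^2/C_L$ plus $c_3\epsilon_1+c_4\epsilon_2$.
\end{itemize}

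You instead run one recursion for $\|\rho(t_n)-\varrho_n\|_1$ directly against the exact solution, with per-step source $\|R_{n+1}\|_1+C_\varrho\tau^3+2\|\vartheta_{n+1}\|_1$. Since the recursion is linear, the paper's three estimates are just the superposition of yours. The ingredients are also the same: Lemmas~\ref{lem3.2}--\ref{lem3.4}, the consistency bound \eqref{equ3.3}, and $\widehat{\varrho}_{n+1}\succeq0$ to evaluate the normalization defect as $|1-\Tr(\widehat{\varrho}_{n+1})|$. Your version is shorter and needs no auxiliary sequences. The paper's version reuses Theorem~\ref{thm3.1} as a black box and keeps the origins of the $\tau^2$, $\delta$ and $\epsilon_i$ contributions visibly separate.

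Your geometric-sum bound needs no Riemann-sum caveat; it is an exact inequality. With $a=1+\tau C_L+\tau^2C_L^2/2$ one has $a-1\geq\tau C_L$ and $a^n\leq e^{C_Lt_n}$.

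You were also right to abandon the first route through the normalized FREM solution. Theorem~\ref{thm3.2} already uses up all of $c_1\tau^2=2C_1\tau^2$. A straightforward comparison of normalized FREM with LREM then adds two more $C_\varrho\tau^3$ sources per step, giving $4C_1$ rather than the stated $c_1$.
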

\begin{proof}
   We first split the global error $\rho(t_{n+1})-\varrho_{n+1}$ as follows:
\begin{equation}\label{equ3.11}
  \rho(t_{n+1})-\varrho_{n+1}=(\rho(t_{n+1})-\rho_{n+1})+(\rho_{n+1}-\widecheck{\rho}_{n+1})+(\widecheck{\rho}_{n+1}-\varrho_{n+1}),
\end{equation}
where the auxiliary quantities $\rho_{n+1}$ and $\widecheck{\rho}_{n+1}$ are derived from the unnormalized FREM scheme \eqref{equ2.7} 
with initial value $\rho_0$ and low-rank initial value $\varrho_0$, respectively.  In other words,
  \begin{align}
    & \rho_{n+1}=\Phi(t_n,\rho_n),\qquad 0\leq n\leq N-1,\label{equ3.12}\\
    & \widecheck{\rho}_{n+1}=\Phi(t_n,\widecheck{\rho}_{n}),\qquad 0\leq n\leq N-1,\label{equ3.13}
  \end{align}
where $\widecheck{\rho}_0=\varrho_0$. Note that the first component $\rho(t_{n+1})-\rho_{n+1}$ in \eqref{equ2.11} 
denotes the global error of the unnormalized FREM scheme \eqref{equ2.7}. We apply Theorem \ref{thm3.1} to find
\begin{equation}\label{equ3.14}
  \|\rho(t_{n+1})-\rho_{n+1}\|_1\leq C_1 \, \tau^2.
\end{equation}

The second component $\rho_{n+1}-\widecheck{\rho}_{n+1}$ is the difference between the full-rank solutions 
with initial values $\rho_0$ and low-rank $\varrho_0$.
Subtracting \eqref{equ3.13} from \eqref{equ3.12} and applying Lemma \ref{lem3.2}, we obtain
\begin{eqnarray}\label{equ3.15}
  \|\rho_{n+1}-\widecheck{\rho}_{n+1}\|_1&=&\|\Phi(t_n,\rho_n)-\Phi(t_n,\widecheck{\rho}_n)\|_1
  \leq\left(1+\tau C_L +\tau^2C_L^2/2\right)\|\rho_n-\widecheck{\rho}_n\|_1 \nonumber\\
  &\leq&\left(1+\tau C_L +\tau^2C_L^2/2\right)^{n+1}\|\rho_0-\widecheck{\rho}_0\|_1\leq e^{C_L t_{n+1}} \, \delta.
\end{eqnarray}

The third component $\widecheck{\rho}_{n+1}-\varrho_{n+1}$ in \eqref{equ2.11} is the difference of the solutions obtained 
with the FREM scheme \eqref{equ2.7} and the LREM scheme \eqref{equ2.15} with the same low-rank initial value $\varrho_0$.
By using \eqref{equ2.17}, Lemmas \ref{lem3.2}, \ref{lem3.3} and \ref{lem3.4}, we get
\begin{eqnarray*}
  \|\widehat{\varrho}_{n+1}-\varrho_{n+1}\|_1&=&\|(\Tr(\widehat{\varrho}_{n+1})-1)\varrho_{n+1}\|_1
  =|\Tr(\widehat{\varrho}_{n+1})-1|\|\varrho_{n+1}\|_1 \\
  &=& |\Tr(\widehat{\varrho}_{n+1})-1|=|\Tr(\widetilde{\varrho}_{n+1})-\Tr(\vartheta_{n+1})-1|\\
  &\leq&|\Tr(\widetilde{\varrho}_{n+1})-1|+|\Tr(\vartheta_{n+1})|\leq \|\vartheta_{n+1}\|_1+ C_{\varrho}\tau^3,
\end{eqnarray*}
and it follows that
\begin{eqnarray}\label{equ3.16}
  \|\widecheck{\rho}_{n+1}-\varrho_{n+1}\|_1&\leq& \|\widecheck{\rho}_{n+1}-\widetilde{\varrho}_{n+1}\|_1
  +\|\widetilde{\varrho}_{n+1}-\hat{\varrho}_{n+1}\|_1+\|\widehat{\varrho}_{n+1}-\varrho_{n+1}\|_1\nonumber\\
  &=&\|\Phi(t_n,\widecheck{\rho}_n)-\Phi(t_n,\varrho_n)\|_1+2\|\vartheta_{n+1}\|_1+C_{\varrho} \tau^3 \nonumber\\
  &\leq&\left(1+\tau C_L+\tau^2C_L^2/2\right)\|\widecheck{\rho}_n-\varrho_n\|_1+2\tau(\widetilde{c}_1\epsilon_1+\widetilde{c}_2\epsilon_2)+C_{\varrho}\tau^3.
\end{eqnarray}
By recursion and noting that $\widecheck{\rho}_0=\varrho_0$, we obtain
\begin{eqnarray}\label{equ3.17}
    \|\widecheck{\rho}_{n+1}-\varrho_{n+1}\|_1&\leq&\left(1+\tau C_L+\tau^2C_L^2/2\right)^{n+1}\|\widecheck{\rho}_0-\varrho_0\|_1\nonumber\\
    &&+\left(2\widetilde{c}_1\tau\epsilon_1+2\widetilde{c}_2\tau\epsilon_2+C_{\varrho}\tau^3\right)\sum_{j=0}^{n}\left(1+\tau C_L+\tau^2C_L^2/2\right)^{j} \nonumber\\
    &\leq& \frac{C_{\varrho}}{C_L}(e^{C_Lt_{n+1}}-1)\tau^2 + \frac{ 2\widetilde{c}_1}{C_L}(e^{C_Lt_{n+1}}-1)\epsilon_1+ \frac{ 2\widetilde{c}_2}{C_L}(e^{C_Lt_{n+1}}-1)\epsilon_2.
  \end{eqnarray}
   Combining \eqref{equ3.11}, \eqref{equ3.14},
  \eqref{equ3.15} and \eqref{equ3.17} completes the proof.
\end{proof}

\section{Error analysis of the adjoint problem}\label{sec-err2}
In this section, we consider the error estimates of the FREM scheme \eqref{equ2.11} and the LREM scheme \eqref{equ2.18} 
for the adjoint Lindblad equation \eqref{equ2.3}.

\subsection{Error estimate of the FREM scheme}
We first consider the consistency of the FREM scheme \eqref{equ2.11}. Using error estimates of the basic right-rectangle quadrature 
formula and midpoint quadrature formula, it then follows from \eqref{equ2.10} that
\begin{subequations}\label{equ4.4}
\begin{align}
  &q(t_{n+1/2})=e^{\frac{\tau}{2} A_{n+1}^{\dag}} \left( q(t_{n+1}) + \frac{\tau}{2}\sum_{k=1}^K\gamma_k(t_{n+1}) \, L_k^{\dag} \, q(t_{n+1}) \,L_k \right) e^{\frac{\tau}{2} A_{n+1}}+\mathcal{O}(\tau^2) ,\label{equ4.4a}\\
    &q(t_{n})=e^{\tau A_{n+1/2}^{\dag}}  q(t_{n+1})  e^{\tau A_{n+1/2}} +
  \tau \sum_{k=1}^K\gamma_k(t_{n+1/2}) e^{\frac{\tau}{2} A_{n+1/2}^{\dag}}  L_k^{\dag}  q(t_{n+1/2}) L_k e^{\frac{\tau}{2} A_{n+1/2}}+\mathcal{O}(\tau^3).\label{equ4.4b}
\end{align}
\end{subequations}
Inserting \eqref{equ4.4a} into \eqref{equ4.4b} yields
\begin{equation}\label{equ4.5}
  q(t_{n})=\Psi(t_{n+1},q(t_{n+1}))+\widetilde{R}_n,
\end{equation}
and the truncation error $\widetilde{R}_n$ satisfies
\begin{equation}\label{equ4.6}
  \max_{0\leq n\leq N-1}\|\widetilde{R}_n\|_1\leq C_p \, \tau^3,
\end{equation}
where the positive constant $C_p$ depends on $L_k$, $A(t)$, $\gamma_k(t)$, $q(t)$ and their first and second order derivatives.

\begin{lemma}\label{lem4.3}
 For any Hermitian matrices $q_{n+1},p_{n+1}\in \mathbb{C}^{m\times m}$, it holds that
 \[ \|\Psi(t_{n+1},q_{n+1})-\Psi(t_{n+1},p_{n+1})\|_1\leq \left(1+\tau C_L+\tau^2C_L^2/2\right) \, \|q_{n+1}-p_{n+1}\|_1,\]
 $n=N-1,\ldots,0$.
\end{lemma}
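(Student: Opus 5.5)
The plan is to mirror the proof of Lemma~\ref{lem3.2} step by step, with the forward FREM map $\Phi$ replaced by the adjoint map $\Psi$ from \eqref{equ2.11}. Fix $n$ and apply one step of \eqref{equ2.11} at $t_{n+1}$ to the two terminal matrices $q_{n+1}$ and $p_{n+1}$. Denote the intermediate values by $q_{n+1/2}$ and $p_{n+1/2}$, and the outputs by $q_n=\Psi(t_{n+1},q_{n+1})$ and $p_n=\Psi(t_{n+1},p_{n+1})$. Because the scheme is linear in its argument, the differences satisfy the same recursion with data $q_{n+1}-p_{n+1}$. All matrices involved are Hermitian, since each term is a congruence of a Hermitian matrix.

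The key ingredient is the contraction property of the adjoint congruences, namely
\[
\left\|e^{tA(s)^{\dag}}\sigma e^{tA(s)}\right\|_1\leq\|\sigma\|_1,\qquad t\geq0,
\]
for Hermitian $\sigma$. This does not follow from Lemma~\ref{lem3.1} directly, because the factors appear in the reversed order, so I would prove it separately. Write $\|X\|_1=\sup_{\|Z\|\leq1}|\Tr(ZX)|$. Cyclicity of the trace gives
\[
\Tr\bigl(Ze^{tA^{\dag}}\sigma e^{tA}\bigr)=\Tr\bigl(e^{tA}Ze^{tA^{\dag}}\sigma\bigr),
\]
so it suffices to show that $\|e^{tA}\|\leq1$ in the operator norm. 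This holds because
\[
A+A^{\dag}=-\sum_k\gamma_kL_k^{\dag}L_k\preceq0,
\]
which makes $\|e^{tA}x\|^2$ nonincreasing in $t$. (Equivalently, Lemma~\ref{lem3.1} itself rests on the same bound $\|e^{tA}\|\leq1$, and the argument transfers verbatim.) I regard this step as the only point needing care; everything else is bookkeeping.

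With this bound in hand, the two estimates follow the forward case. From \eqref{equ2.11a},
\[
\|q_{n+1/2}-p_{n+1/2}\|_1\leq\Bigl(1+\frac{\tau}{2}C_L\Bigr)\|q_{n+1}-p_{n+1}\|_1,
\]
where I use $\|L_k^{\dag}XL_k\|_1\leq\|L_k\|^2\|X\|_1\leq\|L_k\|_1^2\|X\|_1$ and $\gamma_k(t_{n+1})\leq\max_t\gamma_k(t)$. From \eqref{equ2.11b}, applying the contraction to both terms gives
\[
\|q_n-p_n\|_1\leq\|q_{n+1}-p_{n+1}\|_1+\tau C_L\|q_{n+1/2}-p_{n+1/2}\|_1.
\]
Inserting the first estimate into the second yields the factor $1+\tau C_L+\tau^2C_L^2/2$, which is the claimed bound.
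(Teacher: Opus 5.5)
Your proposal is correct and follows the paper's proof. You take the difference of the two stages of \eqref{equ2.11}, use the trace-norm contraction of the exponential congruences and $\|L_k^{\dag}XL_k\|_1\leq\|L_k\|_1^2\|X\|_1$ to get the factor $1+\frac{\tau}{2}C_L$ at the half step, and then insert this into the full step. The only difference is that the paper cites Lemma~\ref{lem3.1} directly for the reversed-order congruences $e^{tA^{\dag}}\sigma e^{tA}$, whereas you justify them explicitly via $\|e^{tA}\|\leq1$ (from $A+A^{\dag}\preceq0$), which closes a small gap in that citation.
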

\begin{proof}
  Considering the difference of the equations \eqref{equ2.11a} with respect to different initial values $q_{n+1}$ and $p_{n+1}$ 
  and using Lemma \ref{lem3.1}, we get
  \begin{eqnarray*}
    \|q_{n+1/2}-p_{n+1/2}\|_1&\leq & \|q_{n+1}-p_{n+1}\|_1+\frac{\tau }{2} \sum_{k=1}^{K}\gamma_k(t_{n+1})\|L_k^{\dag}(q_{n+1}-p_{n+1})L_k\|_1\\
    &\leq& (1+\frac{\tau}{2}C_L)\|q_{n+1}-p_{n+1}\|_1.
  \end{eqnarray*}
  Similarly, using \eqref{equ2.11b}, Lemma \ref{lem3.1}, $q_{n}=\Psi(t_{n+1},q_{n+1})$ and $p_{n}=\Psi(t_{n+1},p_{n+1})$, we have
  \begin{eqnarray*}
  \|q_{n}-p_{n}\|_1 &=&\|\Psi(t_{n+1},q_{n+1})-\Psi(t_{n+1},p_{n+1})\|_1\\
   &\leq& \|q_{n+1}-p_{n+1}\|_1+\tau \sum_{k=1}^{K}\gamma_k(t_{n+1/2})\|L_k^{\dag}(q_{n+1/2}-p_{n+1/2})L_k\|_1\\
    &\leq&\|q_{n+1}-p_{n+1}\|_1+\tau C_L \|q_{n+1/2}-p_{n+1/2}\|_1\\
    &=&\left(1+\tau C_L+\tau^2 C_L^2/2\right)\|q_{n+1}-p_{n+1}\|_1,
  \end{eqnarray*}
  which completes the proof.
\end{proof}

The following result shows the second-order convergence of the FREM scheme \eqref{equ2.11}.
\begin{theorem}\label{thm4.1}
  The numerical solution $q_n$ generated by the FREM scheme \eqref{equ2.11} with $q_N=q_T$ satisfies the error estimate
  \[\|q(t_n)-q_n\|_1\leq \widetilde{C}_1 \, \tau^2,\qquad 0\leq n\leq N,\]
  where $\widetilde{C}_1=\left(e^{C_L(T-t_n)}-1\right)C_p/C_L$.
\end{theorem}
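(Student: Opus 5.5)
The proof mirrors that of Theorem~\ref{thm3.1}, run backward in time. Write the global error as $e_n:=q(t_n)-q_n$, so that $e_N=0$ because $q_N=q_T=q(T)$. Subtracting the scheme $q_n=\Psi(t_{n+1},q_{n+1})$ from the consistency identity \eqref{equ4.5} gives
\[
e_n=\Psi(t_{n+1},q(t_{n+1}))-\Psi(t_{n+1},q_{n+1})+\widetilde{R}_n .
\]
Both $q(t_{n+1})$ and $q_{n+1}$ are Hermitian, since the adjoint evolution and the scheme \eqref{equ2.11} preserve Hermiticity. Lemma~\ref{lem4.3} together with the truncation bound \eqref{equ4.6} therefore yields the one-step recursion
\[
\|e_n\|_1\leq \left(1+\tau C_L+\tau^2C_L^2/2\right)\|e_{n+1}\|_1+C_p\tau^3,\qquad n=N-1,\ldots,0.
\]

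Next I unroll this recursion from $n=N-1$ down to $n$. Set $\lambda:=1+\tau C_L+\tau^2C_L^2/2$. Using $e_N=0$, after $N-n$ steps,
\[
\|e_n\|_1\leq C_p\tau^3\sum_{j=0}^{N-n-1}\lambda^{j}=C_p\tau^3\,\frac{\lambda^{N-n}-1}{\lambda-1}.
\]
Since $\lambda\leq e^{\tau C_L}$, we get $\lambda^{N-n}\leq e^{C_L(N-n)\tau}=e^{C_L(T-t_n)}$. Since $\lambda-1\geq\tau C_L$, the denominator is at least $\tau C_L$. Together these give
\[
\|e_n\|_1\leq \frac{C_p}{C_L}\left(e^{C_L(T-t_n)}-1\right)\tau^2=\widetilde{C}_1\tau^2 ,
\]
which is the claimed estimate. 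The case $n=N$ is trivial.

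The only step that carries real content is the consistency bound \eqref{equ4.6}, which is taken as given from the preceding discussion, so I expect no genuine obstacle. The one point needing care is the elementary inequality in the final step. Replacing $\lambda^{N-n}$ by $e^{C_L(T-t_n)}$ in the numerator is an upper bound, and bounding the denominator below by $\tau C_L$ is also an upper bound. These two uses are compatible because the numerator $\lambda^{N-n}-1$ is nonnegative. This is exactly the computation used, without comment, in the proof of Theorem~\ref{thm3.1}.
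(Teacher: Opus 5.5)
Your proposal is correct and follows essentially the same route as the paper's proof: subtract the scheme from the consistency identity \eqref{equ4.5}, apply Lemma~\ref{lem4.3} and \eqref{equ4.6} to get the one-step backward recursion, and unroll it using $q_N=q(T)$. The only difference is that you make explicit the geometric-sum bound (via $\lambda\leq e^{\tau C_L}$ and $\lambda-1\geq\tau C_L$) and the Hermiticity needed to invoke Lemma~\ref{lem4.3}, both of which the paper leaves implicit.
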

\begin{proof}
  Subtracting $q_n=\Psi(t_{n+1},q_{n+1})$ from \eqref{equ4.5}, applying \eqref{equ4.6} and Lemma \ref{lem4.3} yields
  \begin{eqnarray*}
    \|q(t_n)-q_n\|_1&\leq&\|\Psi(t_{n+1},q(t_{n+1}))-\Psi(t_{n+1},q_{n+1})\|_1+\|\widetilde{R}_n\|_1\\
    &\leq& \left(1+\tau C_L+\tau^2C_L^2/2\right)\|q(t_{n+1})-q_{n+1}\|_1 + C_p\tau^3.
  \end{eqnarray*}
  Then we have
  \begin{equation*}
    \|q(t_n)-q_n\|_1\leq \left(1+\tau C_L+\tau^2C_L^2/2\right)^{N-n}\|q(T)-q_N\|_1
    +C_p\tau^3\sum_{j=0}^{N-n-1}\left(1+\tau C_L+\tau^2C_L^2/2\right)^j.
  \end{equation*}
  Noting that $q_N=q(T)$, so that the statement holds.
\end{proof}

\subsection{Error estimate of the LREM scheme}
Our next aim is to estimate the error of the LREM scheme \eqref{equ2.18} for the adjoint Lindblad equation. 
First, we present the following result concerning the bound of $\theta_n$ as defined in \eqref{equ2.20}.

\begin{lemma}\label{lem4.5}
 Let $\varepsilon_1>0$ be the error tolerance of the column compression algorithm used in the LREM scheme \eqref{equ2.18}. 
 Assume that the matrix-exponential approximation used in the LREM
scheme \eqref{equ2.18} satisfies
\[
\left\|
e^{\mu\tau A_{\nu}^{\dagger}}
\sigma
e^{\mu\tau A_{\nu}}
-
\mathfrak{e}^{\mu\tau A_{\nu}^{\dagger}}
\sigma
\mathfrak{e}^{\mu\tau A_{\nu}}
\right\|_1
\leq
\widetilde{C}_e\varepsilon_2,
\]
for
\[
\mu\in\left\{\frac12,1\right\},
\qquad
\nu\in
\left\{
n+\frac12,n+1
\right\},
\qquad
n=0,\ldots,N-1,
\]
and for every Hermitian
\(\sigma\in\mathbb{C}^{m\times m}\), where $\varepsilon_2>0$ is the
   corresponding error tolerance and $\widetilde{C}_e>0$ is the error constant. Then it holds that
  \begin{equation*}
  \|\theta_{n}\|_1\leq \widehat{c}_1 \, \varepsilon_1 + \widehat{c}_2  \, \varepsilon_2,\qquad n=0,1,\ldots,N-1,
\end{equation*}
where
$\widehat{c}_1=2+3C_L\tau/2$ and $\widehat{c}_2=3\widetilde{C}_e+\widetilde{C}_eC_L\tau(3+\tau)/2$.
\end{lemma}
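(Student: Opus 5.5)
The plan is to copy the proof of Lemma~\ref{lem3.4}, with the forward factors $X$, $G$ replaced by the adjoint pairs $(Y,S)$, $(W,Z)$. The truncation $\mathcal{T}_{\varepsilon_1}$ is replaced by $\widetilde{\mathcal{T}}_{\varepsilon_1}$, whose trace-norm guarantee \eqref{equ:column-compression} plays the role of \eqref{equ2.14}. Write $\widetilde{F}^\ast(t,q)=\sum_k\gamma_k(t)L_k^\dagger qL_k$, which satisfies $\|\widetilde{F}^\ast(t,\sigma)\|_1\le C_L\|\sigma\|_1$ for Hermitian $\sigma$. Recall that $\theta_n=\widetilde{p}_n-p_n$, where $\widetilde{p}_n=\Psi(t_{n+1},p_{n+1})$ and $p_n=Y_nS_nY_n^\dagger$. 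Introduce the intermediate quantity
\[
\phi_n=\mathfrak{e}^{\tau A_{n+1/2}^\dagger}p_{n+1}\mathfrak{e}^{\tau A_{n+1/2}}+\mathfrak{e}^{\frac{\tau}{2}A_{n+1/2}^\dagger}W_{n+1/2}Z_{n+1/2}W_{n+1/2}^\dagger\mathfrak{e}^{\frac{\tau}{2}A_{n+1/2}} .
\]
This is exactly $\widetilde{Y}_n\widetilde{S}_n\widetilde{Y}_n^\dagger$, because $\widetilde S_n$ is block diagonal. Then split $\|\theta_n\|_1\le\|\widetilde{p}_n-\phi_n\|_1+\|\phi_n-p_n\|_1$. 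The second term is at most $\varepsilon_1$ by the final compression step.

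For the first term, I will use the identity $\widetilde{W}_{n+1/2}\widetilde{Z}_{n+1/2}\widetilde{W}_{n+1/2}^\dagger=\tau\widetilde{F}^\ast(t_{n+1/2},Y_{n+1/2}S_{n+1/2}Y_{n+1/2}^\dagger)$. It follows from the Kronecker structure $I_K\otimes S$. I then split $\|\widetilde p_n-\phi_n\|_1$ into four pieces $I_1,\dots,I_4$, exactly as in \eqref{equ3.6}:
\begin{itemize}
\item $I_1$ is the exponential-approximation error on $p_{n+1}$, so $I_1\le\widetilde C_e\varepsilon_2$.
\item $I_3$ is the exponential-approximation error on $\widetilde W\widetilde Z\widetilde W^\dagger$, so $I_3\le\widetilde C_e\varepsilon_2$.
\item $I_4$ is the compression error of $(\widetilde W_{n+1/2},\widetilde Z_{n+1/2})$ propagated through $\mathfrak{e}$. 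As in the forward case, Lemma~\ref{lem3.1} gives $\|\mathfrak{e}^{\dagger}\sigma\mathfrak{e}\|_1\le\widetilde C_e\varepsilon_2+\|\sigma\|_1$, so $I_4\le\widetilde C_e\varepsilon_2+\varepsilon_1$.
\item $I_2$ is the term $e^{\frac{\tau}{2}A^\dagger_{n+1/2}}\bigl(\tau\widetilde F^\ast(t_{n+1/2},\widetilde p_{n+1/2})-\widetilde W\widetilde Z\widetilde W^\dagger\bigr)e^{\frac{\tau}{2}A_{n+1/2}}$, where $\widetilde p_{n+1/2}$ is the exact FREM half-step \eqref{equ2.11a} applied to $p_{n+1}$.
\end{itemize}
Lemma~\ref{lem3.1} holds for $e^{tA^\dagger}\sigma e^{tA}$ as well. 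This is the same contractivity used implicitly in Lemma~\ref{lem4.3}, since the adjoint dissipative semigroup is contractive in trace norm. With it, the first step gives $I_2\le\tau C_L\|\widetilde p_{n+1/2}-Y_{n+1/2}S_{n+1/2}Y_{n+1/2}^\dagger\|_1$.

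The main work is in $I_2$, which I will handle as in \eqref{equ3.9}. Insert $\widetilde Y_{n+1/2}\widetilde S_{n+1/2}\widetilde Y_{n+1/2}^\dagger$; its compression contributes $\varepsilon_1$. The remaining difference is
\[
e^{\frac{\tau}{2}A^\dagger_{n+1}}\bigl(p_{n+1}+\tfrac{\tau}{2}\widetilde F^\ast(t_{n+1},p_{n+1})\bigr)e^{\frac{\tau}{2}A_{n+1}}-\mathfrak{e}^{\frac{\tau}{2}A_{n+1}^\dagger}\bigl(p_{n+1}+\tfrac12 W_{n+1}Z_{n+1}W_{n+1}^\dagger\bigr)\mathfrak{e}^{\frac{\tau}{2}A_{n+1}} .
\]
The $p_{n+1}$ part gives $\widetilde C_e\varepsilon_2$. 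For the $\frac12$-weighted part, use $\widetilde W_{n+1}\widetilde Z_{n+1}\widetilde W_{n+1}^\dagger=\tau\widetilde F^\ast(t_{n+1},p_{n+1})$. The exponential mismatch on this part contributes $\frac12\tau\widetilde C_e\varepsilon_2$. The compression of $(\widetilde W_{n+1},\widetilde Z_{n+1})$ pushed through $\mathfrak e$ contributes $\frac12(\widetilde C_e\varepsilon_2+\varepsilon_1)$.

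Collecting terms, $I_2\le\frac12\tau^2C_L\widetilde C_e\varepsilon_2+\frac12\tau C_L(\widetilde C_e\varepsilon_2+\varepsilon_1)+\tau C_L(\widetilde C_e\varepsilon_2+\varepsilon_1)$. Summing all contributions gives
\[
\|\theta_n\|_1\le(2+\tfrac32\tau C_L)\varepsilon_1+\bigl(3\widetilde C_e+\widetilde C_eC_L\tau(3+\tau)/2\bigr)\varepsilon_2 ,
\]
which matches $\widehat c_1$, $\widehat c_2$.

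The step most likely to need care is confirming that the $I_K\otimes S$ block structure really reproduces $\tau\widetilde F^\ast$ with the ordering of the columns in $\widetilde W$. One also needs that the compressed $Z$ factors stay Hermitian, so that all intermediate matrices to which Lemma~\ref{lem3.1} and the exponential assumption are applied are Hermitian. Indefiniteness of $S$ causes no difficulty, since every bound is in trace norm rather than via positivity.
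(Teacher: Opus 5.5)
Your proposal is correct and follows the paper's proof essentially line by line: the same intermediate $\phi_n=\widetilde{Y}_n\widetilde{S}_n\widetilde{Y}_n^{\dagger}$, the same split into $I_1,\dots,I_4$ with identical bounds, and the same chain for $I_2$, which produces exactly $\widehat{c}_1$ and $\widehat{c}_2$. The one point you justify loosely, namely trace-norm contractivity of $\sigma\mapsto e^{tA_\nu^{\dagger}}\sigma e^{tA_\nu}$, follows directly from $A_\nu+A_\nu^{\dagger}=-\sum_k\gamma_k(t_\nu)L_k^{\dagger}L_k\preceq 0$: this gives $e^{tA_\nu}$ spectral norm at most one, so no appeal to properties of the full adjoint Lindblad semigroup is needed.
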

\begin{proof}
  With the notation
  \begin{equation*}
    \phi_{n}=\mathfrak{e}^{\tau A_{n+1/2}^{\dag}}p_{n+1}\mathfrak{e}^{\tau A_{n+1/2}}+
    \mathfrak{e}^{\frac{\tau}{2} A_{n+1/2}^{\dag}}W_{n+1/2}Z_{n+1/2}W_{n+1/2}^{\dag}\mathfrak{e}^{\frac{\tau}{2} A_{n+1/2}},
  \end{equation*}
  and the triangle inequality, we have
  \begin{eqnarray}\label{equ4.7}
    \|\theta_{n}\|_1=\|\widetilde{p}_{n}-\phi_{n}+
    \phi_{n}-p_{n}\|_1
    \leq\|\widetilde{p}_{n}-\phi_{n}\|_1
    +\|\phi_{n}-p_{n}\|_1.
  \end{eqnarray}
  Note from \eqref{equ2.18} and \eqref{equ2.20} that $\phi_n=\widetilde{Y}_n\widetilde{S}_n\widetilde{Y}_n^{\dag}$,
  $p_n=Y_nS_nY_n^{\dag}$ and $\left[Y_n,S_n\right]=\widetilde{\mathcal{T}}_{\varepsilon_1}\left(\widetilde{Y}_n,\widetilde{S}_n\right)$. We then have
  \begin{equation}\label{equ4.8}
    \|\phi_{n}-p_{n}\|_1=\left\|\widetilde{Y}_n\widetilde{S}_n\widetilde{Y}_n^{\dag}-Y_nS_nY_n^{\dag}\right\|_1
    \leq \varepsilon_1.
  \end{equation}

  Since $\widetilde{p}_n=\Psi(t_{n+1},p_{n+1})$, straightforward calculation shows that
  \begin{equation*}
    \widetilde{p}_n=e^{\tau A_{n+1/2}^{\dag}}p_{n+1}e^{\tau A_{n+1/2}}
    +\tau e^{\frac{\tau}{2} A_{n+1/2}^{\dag}}\widecheck{F}(t_{n+1/2},\widetilde{p}_{n+1/2})e^{\frac{\tau}{2} A_{n+1/2}},
  \end{equation*}
  where
  \[\widetilde{p}_{n+1/2}=e^{\frac{\tau}{2} A_{n+1}^{\dag}}\left(p_{n+1}+\frac{\tau}{2} \widecheck{F}(t_{n+1},p_{n+1})\right)e^{\frac{\tau}{2} A_{n+1}},\quad \widecheck{F}(t,p)=\sum_{k=1}^K\gamma_k(t) \, L_k^{\dag} \, p \,L_k.\]
 Then by the triangle inequality we get
  \begin{eqnarray}\label{equ4.9}
   \|\widetilde{p}_{n}-\phi_{n}\|_1 &\leq& \underbrace{\left\|e^{\tau A_{n+1/2}^{\dag}}p_{n+1}e^{\tau A_{n+1/2}}-
    \mathfrak{e}^{\tau A_{n+1/2}^{\dag}}p_{n+1}\mathfrak{e}^{\tau A_{n+1/2}}\right\|_1}_{:=I_1} \nonumber\\
     &&+ \underbrace{
    \left\|e^{\frac{\tau}{2} A_{n+1/2}^{\dag}}\left(\tau\widecheck{F}(t_{n+1/2},\widetilde{p}_{n+1/2})-\widetilde{W}_{n+1/2}\widetilde{Z}_{n+1/2}\widetilde{W}_{n+1/2}^{\dag}\right)e^{\frac{\tau}{2} A_{n+1/2}}\right\|_1}_{:=I_2} \nonumber \\
    &&+\underbrace{\left\|e^{\frac{\tau}{2} A_{n+1/2}^{\dag}}\widetilde{W}_{n+1/2}\widetilde{Z}_{n+1/2}\widetilde{W}_{n+1/2}^{\dag}e^{\frac{\tau}{2} A_{n+1/2}}-\mathfrak{e}^{\frac{\tau}{2} A_{n+1/2}^{\dag}}\widetilde{W}_{n+1/2}\widetilde{Z}_{n+1/2}\widetilde{W}_{n+1/2}^{\dag}\mathfrak{e}^{\frac{\tau}{2} A_{n+1/2}}\right\|_1}_{:=I_3}\nonumber\\
    &&+\underbrace{\left\|\mathfrak{e}^{\frac{\tau}{2} A_{n+1/2}^{\dag}}\left(\widetilde{W}_{n+1/2}\widetilde{Z}_{n+1/2}\widetilde{W}_{n+1/2}^{\dag}-W_{n+1/2}Z_{n+1/2}W_{n+1/2}^{\dag}\right)\mathfrak{e}^{\frac{\tau}{2} A_{n+1/2}}\right\|_1}_{:=I_4}.
  \end{eqnarray}

 By the assumption on the matrix exponential algorithm, we have
\begin{equation}\label{equ4.10}
  I_1\leq \widetilde{C}_e \, \varepsilon_2,\quad I_3\leq \widetilde{C}_e \, \varepsilon_2.
\end{equation}
Note that
\begin{eqnarray*}
  \left\|\mathfrak{e}^{\frac{\tau}{2} A_{n+1/2}^{\dag}}\sigma\mathfrak{e}^{\frac{\tau}{2} A_{n+1/2}}\right\|_1
  &\leq& \left\|\mathfrak{e}^{\frac{\tau}{2} A_{n+1/2}^{\dag}}\sigma\mathfrak{e}^{\frac{\tau}{2} A_{n+1/2}}-
  e^{\frac{\tau}{2} A_{n+1/2}^{\dag}}\sigma e^{\frac{\tau}{2} A_{n+1/2}}\right\|_1+\left\|e^{\frac{\tau}{2} A_{n+1/2}^{\dag}}\sigma e^{\frac{\tau}{2} A_{n+1/2}}\right\|_1\\
  &\leq& \widetilde{C}_e\varepsilon_2 + \|\sigma\|_1,
\end{eqnarray*}
this combines with $\left[W_{n+1/2},Z_{n+1/2}\right]=\widetilde{\mathcal{T}}_{\varepsilon_1}\left(\widetilde{W}_{n+1/2},\widetilde{Z}_{n+1/2}\right)$ gives
\begin{equation}\label{equ4.11}
  I_4\leq \widetilde{C}_e\varepsilon_2+\left\|\widetilde{W}_{n+1/2}\widetilde{Z}_{n+1/2}\widetilde{W}_{n+1/2}^{\dag}-W_{n+1/2}Z_{n+1/2}W_{n+1/2}^{\dag}\right\|_1
  \leq \widetilde{C}_e\varepsilon_2+\varepsilon_1.
\end{equation}

We see from \eqref{equ2.18} that 
\[\widetilde{W}_{n+1/2}\widetilde{Z}_{n+1/2}\widetilde{W}_{n+1/2}^{\dag}=\tau \widecheck{F}\left(t_{n+1/2},Y_{n+1/2}S_{n+1/2}Y_{n+1/2}^{\dag}\right), \quad
 \left[Y_{n+1/2},S_{n+1/2}\right]=\mathcal{T}_{\varepsilon_1}\left(\widetilde{Y}_{n+1/2},\widetilde{S}_{n+1/2}\right),\]
  then we have
\begin{eqnarray}\label{equ4.12}
  I_2&\leq& \tau \left\|\widecheck{F}(t_{n+1/2},\widetilde{p}_{n+1/2})-\widecheck{F}(t_{n+1/2},Y_{n+1/2}S_{n+1/2}Y_{n+1/2}^{\dag})\right\|_1 \nonumber \\
  &\leq& \tau C_L\left\|\widetilde{p}_{n+1/2}-Y_{n+1/2}S_{n+1/2}Y_{n+1/2}^{\dag}\right\|_1\nonumber \\
  &\leq& \tau C_L\left\|\widetilde{p}_{n+1/2}-\widetilde{Y}_{n+1/2}\widetilde{S}_{n+1/2}\widetilde{Y}_{n+1/2}^{\dag}\right\|_1
  +\tau C_L\left\|\widetilde{Y}_{n+1/2}\widetilde{S}_{n+1/2}\widetilde{Y}_{n+1/2}^{\dag}-Y_{n+1/2}S_{n+1/2}Y_{n+1/2}^{\dag}\right\|_1\nonumber \\
  &\leq& \tau C_L\left\|e^{\frac{\tau}{2} A_{n+1}^{\dag}}(p_{n+1}+\frac{\tau}{2} \widecheck{F}(t_{n+1},p_{n+1}))e^{\frac{\tau}{2} A_{n+1}}-
  \mathfrak{e}^{\frac{\tau}{2} A_{n+1}^{\dag}}(p_{n+1}+\frac{1}{2} W_{n+1}Z_{n+1}W_{n+1}^{\dag})\mathfrak{e}^{\frac{\tau}{2} A_{n+1}}\right\|_1+\tau C_L\varepsilon_1\nonumber \\
  &\leq&\frac{\tau C_L}{2}\left\|\tau e^{\frac{\tau}{2} A_{n+1}^{\dag}}\widecheck{F}(t_{n+1},p_{n+1})e^{\frac{\tau}{2} A_{n+1}}-\mathfrak{e}^{\frac{\tau}{2} A_{n+1}^{\dag}} W_{n+1}Z_{n+1}W_{n+1}^{\dag}\mathfrak{e}^{\frac{\tau}{2} A_{n+1}}\right\|_1 +\tau C_L(\widetilde{C}_e\varepsilon_2+\varepsilon_1)\nonumber \\
  &\leq& \frac{\tau C_L}{2}\left\|\tau e^{\frac{\tau}{2} A_{n+1}^{\dag}}\widecheck{F}(t_{n+1},p_{n+1})e^{\frac{\tau}{2} A_{n+1}}-\tau\mathfrak{e}^{\frac{\tau}{2} A_{n+1}^{\dag}} \widecheck{F}(t_{n+1},p_{n+1})\mathfrak{e}^{\frac{\tau}{2} A_{n+1}}\right\|_1 \nonumber\\
  &&+\frac{\tau C_L}{2}\left\|\mathfrak{e}^{\frac{\tau}{2} A_{n+1}^{\dag}} \left(\widetilde{W}_{n+1}\widetilde{Z}_{n+1}\widetilde{W}_{n+1}^{\dag}-W_{n+1}Z_{n+1}W_{n+1}^{\dag}\right)\mathfrak{e}^{\frac{\tau}{2} A_{n+1}}\right\|_1+\tau C_L(\widetilde{C}_e\varepsilon_2+\varepsilon_1)\nonumber \\
  &\leq& \frac{1}{2}\tau^2C_L\widetilde{C}_e\varepsilon_2
  +\frac{1}{2}\tau C_L(\varepsilon_1+\widetilde{C}_e\varepsilon_2)+\tau C_L(\widetilde{C}_e\varepsilon_2+\varepsilon_1).
\end{eqnarray}
The desired result follows from \eqref{equ4.7}-\eqref{equ4.12}.
\end{proof}

Finally, we prove a convergence result for the LREM scheme \eqref{equ2.18}.
We assume that the terminal low-rank approximation satisfies
\[\|p_N-p_T\|_1\leq\delta,\]
for some $\delta>0$.

\begin{theorem}\label{thm4.3}
 Assume that the error tolerances of the column compression and matrix exponential algorithm satisfy $\varepsilon_1=\tau\epsilon_1$ 
 and $\varepsilon_2=\tau\epsilon_2$ for some $\epsilon_1,~\epsilon_2>0$, respectively. Let $q(t)$ be the solution of the adjoint Lindblad 
 equation \eqref{equ2.3} and $\{p_n\}_{n=0}^N$ be the numerical solution generated by the LREM scheme \eqref{equ2.18}. Then it holds that
  \[\|q(t_n)-p_n\|_1\leq \widecheck{c}_1 \, \tau^2+\widecheck{c}_2 \, \delta+\widecheck{c}_3 \, \epsilon_1+\widecheck{c}_4 \, \epsilon_2,\qquad 0\leq n\leq N,\]
  where $\widecheck{c}_1=\widetilde{C}_1$, $\widecheck{c}_2=e^{C_L(T-t_n)}$,  $\widecheck{c}_3=\widehat{c}_1\left(e^{C_L(T-t_n)}-1\right)/C_L$ 
 and $\widecheck{c}_4=\widehat{c}_2\left(e^{C_L(T-t_n)}-1\right)/C_L$.
\end{theorem}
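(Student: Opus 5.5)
The proof follows the forward argument of Theorem~\ref{thm3.3}, now run backward in time and without the normalization step, since the adjoint scheme \eqref{equ2.18} is not normalized. Let $q_n$ denote the full-rank FREM adjoint iterates \eqref{equ2.11} with $q_N=q_T$. Let $\widecheck{q}_n$ denote the FREM iterates started from the low-rank terminal value, that is $\widecheck{q}_N=p_N$ and $\widecheck{q}_n=\Psi(t_{n+1},\widecheck{q}_{n+1})$. We split the global error as
\[
q(t_n)-p_n=(q(t_n)-q_n)+(q_n-\widecheck{q}_n)+(\widecheck{q}_n-p_n).
\]
Each piece is then bounded separately.

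\textbf{Step 1.} The first term is the FREM error. Theorem~\ref{thm4.1} gives $\|q(t_n)-q_n\|_1\leq \widetilde{C}_1\tau^2$, which supplies the constant $\widecheck{c}_1$.

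\textbf{Step 2.} The second term depends only on the terminal data. Applying Lemma~\ref{lem4.3} recursively from $N$ down to $n$ gives
\[
\|q_n-\widecheck{q}_n\|_1\leq (1+\tau C_L+\tau^2C_L^2/2)^{N-n}\|q_T-p_N\|_1 .
\]
Since $1+x+x^2/2\leq e^x$, this is at most $e^{C_L(T-t_n)}\delta$, giving $\widecheck{c}_2$. Here we read the hypothesis $\|p_N-p_T\|_1\leq\delta$ as $\|p_N-q_T\|_1\leq\delta$.

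\textbf{Step 3.} For the third term we use the representation \eqref{equ2.20}, namely $p_n=\Psi(t_{n+1},p_{n+1})-\theta_n$. Combined with Lemma~\ref{lem4.3}, this yields
\[
\|\widecheck{q}_n-p_n\|_1\leq (1+\tau C_L+\tau^2C_L^2/2)\|\widecheck{q}_{n+1}-p_{n+1}\|_1+\|\theta_n\|_1 .
\]
Lemma~\ref{lem4.5} with $\varepsilon_i=\tau\epsilon_i$ gives $\|\theta_n\|_1\leq \tau(\widehat{c}_1\epsilon_1+\widehat{c}_2\epsilon_2)$. Because $\widecheck{q}_N=p_N$, unrolling the recursion gives
\[
\|\widecheck{q}_n-p_n\|_1\leq \tau(\widehat{c}_1\epsilon_1+\widehat{c}_2\epsilon_2)\sum_{j=0}^{N-n-1}(1+\tau C_L+\tau^2C_L^2/2)^j .
\]
We then bound the geometric sum by $(e^{C_L(T-t_n)}-1)/(\tau C_L)$. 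The factor $\tau$ cancels and we obtain exactly $\widecheck{c}_3\epsilon_1+\widecheck{c}_4\epsilon_2$.

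There is no trace correction here, unlike \eqref{equ3.16}. As a result the factor $2$ and the extra $C_\varrho\tau^3$ term of the forward proof do not appear, which matches the stated constants. Adding the three bounds completes the proof.

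\textbf{Main obstacle: the geometric-sum estimate.} Set $a=1+\tau C_L+\tau^2C_L^2/2$. Then $\sum_{j<M}a^j=(a^M-1)/(a-1)$, and we need $a-1\geq \tau C_L$ together with $a^M\leq e^{M\tau C_L}$. Both hold, so the bound $(e^{C_L(T-t_n)}-1)/(\tau C_L)$ follows. The same inequality was used implicitly in Theorems~\ref{thm3.1} and~\ref{thm4.1}. Beyond this, only the careful backward indexing needs checking.
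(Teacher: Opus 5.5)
Your proposal is correct and follows the paper's proof essentially step for step. It uses the same three-term splitting via the auxiliary FREM iterates $q_n$ and $\widecheck{q}_n$, Theorem~\ref{thm4.1} for the first term, and Lemma~\ref{lem4.3} for the terminal-data term. For the third term it uses Lemma~\ref{lem4.5} with the stability recursion, via $p_n=\Psi(t_{n+1},p_{n+1})-\theta_n$ and $\widecheck{q}_N=p_N$. Your explicit check of the geometric-sum bound and your reading of the terminal hypothesis as $\|p_N-q_T\|_1\leq\delta$ both agree with how the paper uses these facts.
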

\begin{proof}
  We split the global error $q(t_{n})-p_{n}$ as follows:
\begin{equation}\label{equ4.14}
  q(t_{n})-p_{n}=(q(t_{n})-q_{n})+(q_{n}-\widecheck{q}_{n})+(\widecheck{q}_{n}-p_{n}),
\end{equation}
where the auxiliary quantities $q_{n}$ and $\widecheck{q}_{n}$ are obtained from the FREM scheme \eqref{equ2.11} 
with terminal value $q_T$ and low-rank terminal value $p_N$, respectively.  In other words, we have
  \begin{align}
    & q_{n}=\Psi(t_{n+1},q_{n+1}),\qquad n=N-1,\ldots,0, \label{equ4.15}\\
    & \widecheck{q}_{n}=\Psi(t_{n+1},\widecheck{q}_{n+1}),\qquad n=N-1,\ldots,0,\label{equ4.16}
  \end{align}
where $q_N=q_T$ and $\widecheck{q}_N=p_N$. Note that the first component $q(t_{n})-q_{n}$ in \eqref{equ4.14} denotes the global error of the FREM scheme \eqref{equ2.11}. Therefore, applying Theorem \ref{thm4.1} yeilds
\begin{equation}\label{equ4.17}
  \|q(t_{n})-q_{n}\|_1\leq \widetilde{C}_1 \, \tau^2.
\end{equation}

The second component $q_{n}-\widecheck{q}_{n}$ in \eqref{equ4.14} is the difference between the full-rank solutions 
with terminal values $q_T$ and low-rank $p_N$.
Subtracting \eqref{equ4.16} from \eqref{equ4.15} and applying Lemma \ref{lem4.3} gives
\begin{eqnarray}\label{equ4.18}
  \|q_{n}-\widecheck{q}_{n}\|_1&=&\|\Psi(t_{n+1},q_{n+1})-\Psi(t_{n+1},\widecheck{q}_{n+1})\|_1
  \leq\left(1+\tau C_L +\tau^2C_L^2/2\right)\|q_{n+1}-\widecheck{q}_{n+1}\|_1 \nonumber\\
  &\leq&\left(1+\tau C_L +\tau^2C_L^2/2\right)^{N-n}\|q_N-\widecheck{q}_N\|_1\leq e^{C_L(T-t_n)} \, \delta.
\end{eqnarray}

The third component $\widecheck{q}_{n}-p_{n}$ in \eqref{equ4.14} is the difference of the solutions obtained with the 
FREM scheme \eqref{equ2.11} and the LREM scheme \eqref{equ2.18} with the same low-rank terminal value $p_N$.
By the triangle inequality we get
\begin{eqnarray}\label{equ4.19}
  \|\widecheck{q}_{n}-p_{n}\|_1&\leq& \|\widecheck{q}_{n}-\widetilde{p}_{n}\|_1
  +\|\widetilde{p}_{n}-p_{n}\|_1,
\end{eqnarray}
where $\widetilde{p}_{n}$ is as defined in \eqref{equ2.20}.
 Applying \eqref{equ4.19} and Lemma \ref{lem4.5}, we obtain
 \begin{eqnarray}\label{equ4.21}
   \|\widecheck{q}_{n}-p_{n}\|_1&\leq& \|\Psi(t_{n+1},\widecheck{q}_{n+1})-\Psi(t_{n+1},p_{n+1})\|_1
   +\|\theta_n\|_1 \nonumber\\
   &\leq& \left(1+\tau C_L+\tau^2C_L^2/2\right)\|\widecheck{q}_{n+1}-p_{n+1}\|_1+\tau\left(\widehat{c}_1\epsilon_1
   +\widehat{c}_2\epsilon_2\right) \nonumber\\
   &\leq&\left(1+\tau C_L+\tau^2C_L^2/2\right)^{N-n}\|\widecheck{q}_{N}-p_{N}\|_1+ \widecheck{c}_3\epsilon_1+\check{c}_4\epsilon_2\nonumber\\
   &=& \widecheck{c}_3\epsilon_1+\widecheck{c}_4\epsilon_2,
 \end{eqnarray}
 where $\widecheck{c}_3=\widehat{c}_1\left(e^{C_L(T-t_n)}-1\right)/C_L$ 
 and $\widecheck{c}_4=\widehat{c}_2\left(e^{C_L(T-t_n)}-1\right)/C_L$. By combining \eqref{equ4.14}, \eqref{equ4.17}, \eqref{equ4.18} and \eqref{equ4.21} 
 we complete the proof.
\end{proof}

\begin{corollary}
\label{cor:adjoint-second-order-lrem}
Under the assumptions of Theorem~\ref{thm4.3}, suppose in addition that
the terminal low-rank approximation error and the scaled approximation
parameters satisfy
\[
\delta \leq C_\delta \tau^2,
\qquad
\epsilon_1 \leq C_{\epsilon_1}\tau^2,
\qquad
\epsilon_2 \leq C_{\epsilon_2}\tau^2,
\]
with constants \(C_\delta,C_{\epsilon_1},C_{\epsilon_2}\) independent of
\(\tau\). Then the low-rank adjoint approximation generated by
\eqref{equ2.18} satisfies
\[
\|q(t_n)-p_n\|_1
\leq
C\,\tau^2,
\qquad
0\leq n\leq N,
\]
where \(C>0\) is independent of \(\tau\). Equivalently, since 
$\varepsilon_1=\tau\epsilon_1$ and $\varepsilon_2=\tau\epsilon_2$, 
second-order convergence of the complete LREM approximation is obtained
whenever
\[
\delta=O(\tau^2),
\qquad
\varepsilon_1=O(\tau^3),
\qquad
\varepsilon_2=O(\tau^3).
\]
\end{corollary}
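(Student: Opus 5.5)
The corollary follows by substituting the additional scalings into the bound of Theorem~\ref{thm4.3}. That theorem gives, for $0\leq n\leq N$,
\[
\|q(t_n)-p_n\|_1\leq \widecheck{c}_1\tau^2+\widecheck{c}_2\delta+\widecheck{c}_3\epsilon_1+\widecheck{c}_4\epsilon_2 .
\]
Each contribution will therefore be shown to be $O(\tau^2)$ with constants independent of $\tau$ and $n$.

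\textbf{Uniform bounds on the coefficients.} Since $t_n\in[0,T]$, we have $e^{C_L(T-t_n)}\leq e^{C_LT}$ and $(e^{C_L(T-t_n)}-1)/C_L\leq (e^{C_LT}-1)/C_L$. This gives
\begin{itemize}
\item $\widecheck{c}_1=\widetilde{C}_1\leq C_p(e^{C_LT}-1)/C_L$;
\item $\widecheck{c}_2\leq e^{C_LT}$.
\end{itemize}
For $\widecheck{c}_3$ and $\widecheck{c}_4$, the factors $\widehat{c}_1=2+3C_L\tau/2$ and $\widehat{c}_2=3\widetilde{C}_e+\widetilde{C}_eC_L\tau(3+\tau)/2$ depend on $\tau$. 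They are bounded uniformly once we restrict to $\tau\leq T$, since $\tau=T/N$ with $N\geq1$. Hence $\widecheck{c}_3,\widecheck{c}_4\leq C'$ with $C'$ depending only on $C_L$, $\widetilde{C}_e$ and $T$.

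\textbf{The constant $C_p$.} The one point needing care is that $C_p$ from \eqref{equ4.6} does not depend on $\tau$. It depends only on $L_k$, $A(t)$, $\gamma_k(t)$, $q(t)$ and their derivatives up to second order on $[0,T]$. For a fixed control these are fixed quantities, so I will simply take the regularity implicit in \eqref{equ4.6} as given.

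\textbf{Conclusion.} Inserting $\delta\leq C_\delta\tau^2$ and $\epsilon_i\leq C_{\epsilon_i}\tau^2$ gives
\[
\|q(t_n)-p_n\|_1\leq C\tau^2,
\qquad
C=\frac{C_p(e^{C_LT}-1)}{C_L}+e^{C_LT}C_\delta+C'(C_{\epsilon_1}+C_{\epsilon_2}).
\]
This constant is independent of $\tau$ and $n$.

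\textbf{The equivalent formulation.} Because $\varepsilon_i=\tau\epsilon_i$, the condition $\epsilon_i=O(\tau^2)$ is the same as $\varepsilon_i=O(\tau^3)$. Together with $\delta=O(\tau^2)$, this is exactly the stated condition for second-order convergence.
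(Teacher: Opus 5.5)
Your proof is correct and follows the paper's argument: you substitute the assumed bounds on $\delta$, $\epsilon_1$, $\epsilon_2$ into the estimate of Theorem~\ref{thm4.3}. You are slightly more careful than the paper, which writes the constant as $\widecheck{c}_1+\widecheck{c}_2C_\delta+\widecheck{c}_3C_{\epsilon_1}+\widecheck{c}_4C_{\epsilon_2}$ without noting that $\widecheck{c}_3,\widecheck{c}_4$ depend on $\tau$ (through $\widehat{c}_1,\widehat{c}_2$) and that all the coefficients depend on $n$; your uniform bounds using $t_n\in[0,T]$ and $\tau\leq T$ settle this.
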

\begin{proof}
The estimate of Theorem~\ref{thm4.3} gives 
$ \|q(t_n)-p_n\|_1
\leq
\widecheck{c}_1\tau^2
+
\widecheck{c}_2\delta
+
\widecheck{c}_3\epsilon_1
+
\widecheck{c}_4\epsilon_2
$. 
Using the assumed bounds on \(\delta\), \(\epsilon_1\), and \(\epsilon_2\),
we obtain
\[
\|q(t_n)-p_n\|_1
\leq
\left(
\widecheck{c}_1
+
\widecheck{c}_2 C_\delta
+
\widecheck{c}_3 C_{\epsilon_1}
+
\widecheck{c}_4 C_{\epsilon_2}
\right)\tau^2,
\]
which proves the result.
\end{proof}

The theoretical results established above provide the foundation for the numerical methods proposed in our work. 
The following experiments are designed to assess their performance from three complementary
perspectives, that is, convergence, preservation of the physical structure of the Lindblad dynamics, 
and applicability within adjoint-based optimal control methods.

\section{Numerical experiments}
\label{sec-experiments}

This section reports numerical experiments supporting the analytical
and computational developments of the paper. First, the convergence,
structure-preserving properties, and computational performance of the
full- and low-rank exponential midpoint propagators are examined.
Subsequently, these propagators are employed within the SQH framework
for smooth and nonsmooth optimal control problems.

The propagation and optimal control experiments were implemented in Python~3.12.4 using
the matrix exponential routines provided by \emph{SciPy}. All computations
were performed on a laptop equipped with an Intel Core i7-8565U
processor at \(1.80\,\mathrm{GHz}\) and \(16\,\mathrm{GB}\) of RAM.

Throughout the experiments, an \(X\)-\(X\) Ising-chain Hamiltonian,
also considered in \citep{Chen}, was used:
\begin{equation}
\label{eq:numerical-Hamiltonian}
H(t)
=
\sum_{k=1}^{K}
\left(
aJ_z^{(k)}
+
b\bigl(J_z^{(k)}\bigr)^2
\right)
+
u(t)
\sum_{1\leq k<\ell\leq K}
J_x^{(k)}J_x^{(\ell)}.
\end{equation}
Here,
\begin{equation}
\label{eq:embedded-angular-momentum}
J_w^{(k)}
=
I_d^{\otimes(k-1)}
\otimes J_w
\otimes I_d^{\otimes(K-k)},
\qquad
w\in\{x,z\},
\end{equation}
where \(I_d\) denotes the \(d\times d\) identity matrix and
\(J_x,J_z\in\mathbb{R}^{d\times d}\) are angular-momentum matrices.
The Lindblad operators were chosen as
\[
L_k=J_z^{(k)},
\qquad
k=1,\ldots,K,
\]
with constant decay rates $\gamma_k(t)\equiv\gamma$. 

\subsection{Verification of the full- and low-rank propagators}
\label{subsec:propagator-experiments}

The first group of experiments verifies the convergence estimates and
the structural properties established for the exponential midpoint
propagators. For the full-rank schemes, the terminal errors are measured
by
\begin{equation}
\label{eq:full-rank-numerical-errors}
e_\rho
=
\|\rho_N-\rho(T)\|_1,
\qquad
e_q
=
\|q_0-q(0)\|_1.
\end{equation}
For the low-rank schemes, the corresponding errors are
\begin{equation}
\label{eq:low-rank-numerical-errors}
\check e_\rho
=
\|X_NX_N^\dagger-\rho(T)\|_1,
\qquad
\check e_q
=
\|Y_0S_0Y_0^\dagger-q(0)\|_1.
\end{equation}

Reference solutions for \(\rho(T)\) and \(q(0)\) were computed using
the solver \emph{mesolve} from QuTiP \citep{Johansson}. Experimental
orders of convergence were obtained from successive refinements of the
uniform time grid.

The full-rank midpoint propagators were first assessed for the forward
and adjoint Lindblad equations. The initial and terminal conditions were
chosen as
\begin{subequations}\label{equ6.1}
\begin{align}
\rho(0)
&=
\frac12\Bigl(
|0\rangle^{\otimes K}\langle0|^{\otimes K}
+
|0\rangle^{\otimes K}\langle d-1|^{\otimes K}
+
|d-1\rangle^{\otimes K}\langle0|^{\otimes K}
+
|d-1\rangle^{\otimes K}\langle d-1|^{\otimes K}
\Bigr),\\
q(T)
&=
\frac12\Bigl(
|1\rangle^{\otimes K}\langle1|^{\otimes K}
+
|1\rangle^{\otimes K}\langle d-2|^{\otimes K}
+
|d-2\rangle^{\otimes K}\langle1|^{\otimes K}
-
|d-2\rangle^{\otimes K}\langle d-2|^{\otimes K}
\Bigr).
\end{align}
\end{subequations}
The matrix exponential routines employed in the FREM propagators were
used with the default machine-precision tolerance.

Figure~\ref{fig6.1} reports the forward and adjoint terminal errors
together with the corresponding experimental orders of convergence.
Second-order convergence was observed for both propagators, in
agreement with Theorems~\ref{thm3.2} and \ref{thm4.1}. The preservation
of the physical structure is illustrated in
Figure~\ref{fig6.2}, where the evolution of representative populations
and the trace error confirm that the forward propagator preserves
positivity and unit trace.

\begin{figure}[h!]
{ \centering
\includegraphics[width=0.48\textwidth]{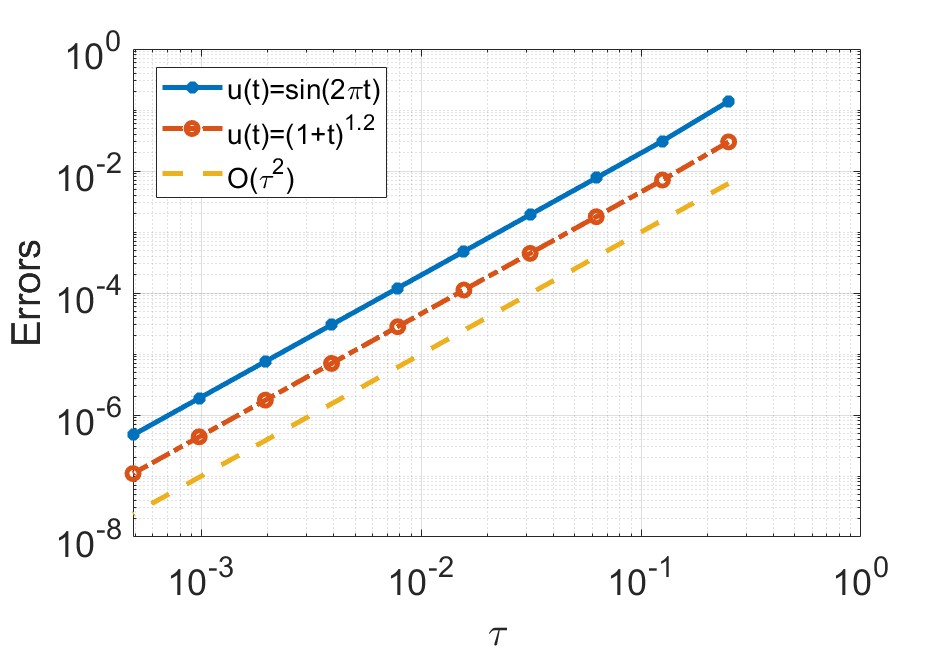}
}  { \centering
\includegraphics[width=0.48\textwidth]{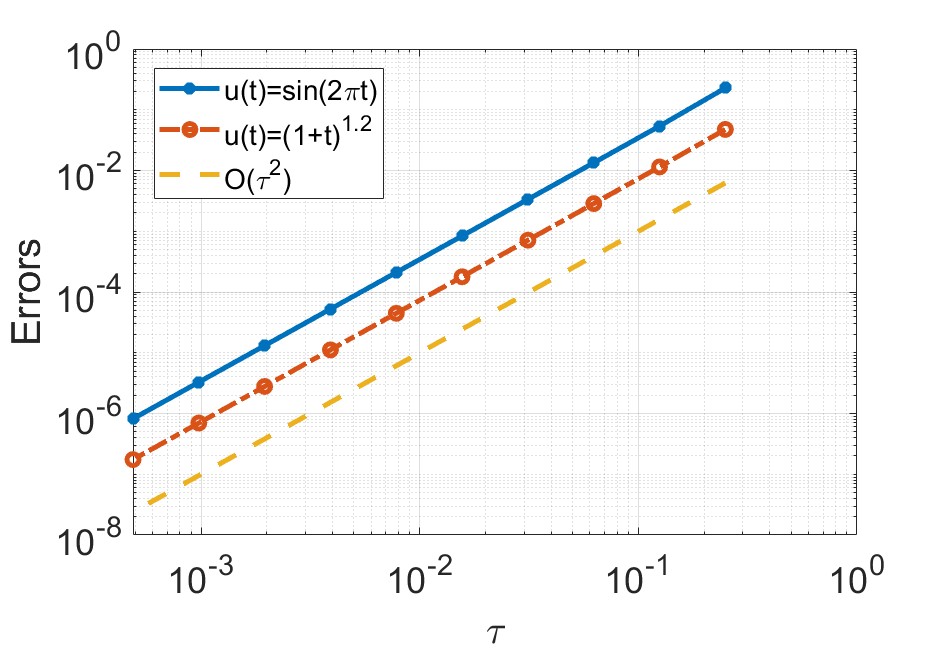}
} \caption{Numerical results of FREM schemes for the Lindblad equations with $d=6$, $K=2$, $a=1.5$, $b=1$, 
$\gamma=0.05$, $T=1$. Left: errors vs step sizes for the forward Lindblad equation. 
Right: errors vs step sizes for the adjoint Lindblad equation.}
\label{fig6.1}
\end{figure}

\begin{figure}[h!]
{ \centering
\includegraphics[width=0.48\textwidth]{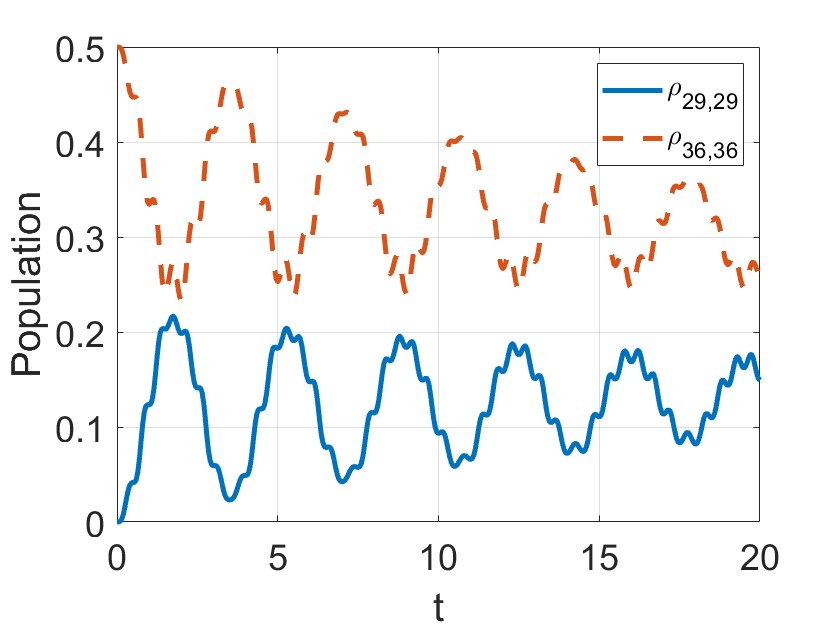}
}  { \centering
\includegraphics[width=0.48\textwidth]{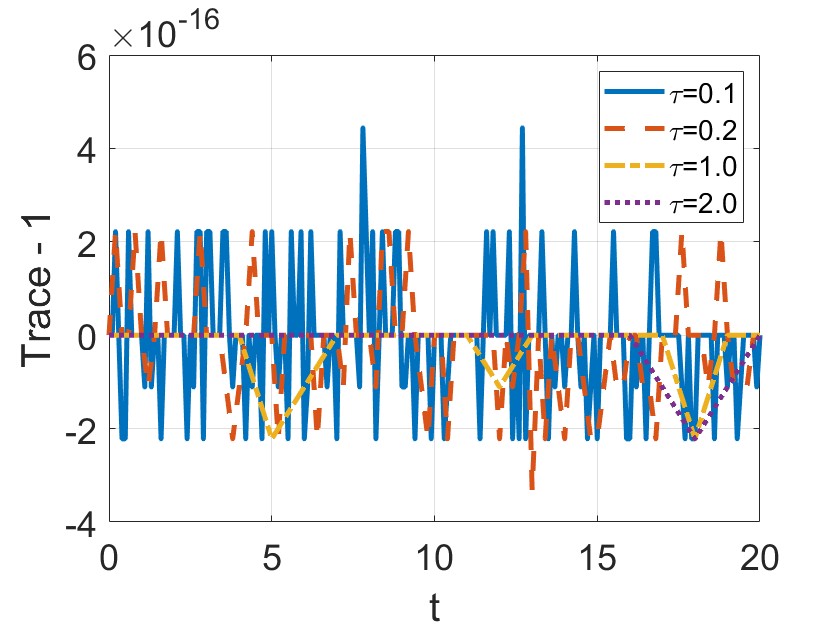}
} \caption{Numerical results of FREM scheme for the forward Lindblad equation with $d=6$, $K=2$, $a=1.5$, $b=1$, $\gamma=0.05$, $T=20$, $u(t)=\sin(2\pi t)$. Left: evolutions of the populations $\rho_{29,29}$ and $\rho_{36,36}$ with $\tau=0.05$. Right: evolution of $\Tr(\rho_n)-1$.}
\label{fig6.2}
\end{figure}

The low-rank midpoint propagators were then examined. The initial and
terminal data were chosen as
\[
\rho(0)
=
\left(1-\frac{\delta}{2}\right)z_1z_1^\top
+
\frac{\delta}{2}z_2z_2^\top,
\qquad
q(T)
=
\left(1-\frac{\delta}{2}\right)z_3z_3^\top
+
\frac{\delta}{2}z_4z_4^\top,
\]
where \(z_1,z_2,z_3,z_4\) are orthonormal vectors obtained from the
singular value decomposition of a random matrix. The corresponding
initial low-rank factors were chosen as
\(X_0=z_1\) and \(Y_N=z_3, S_N=1\), so that the initial approximation error is
prescribed by the parameter \(\delta\).

Figures~\ref{fig6.3}-\ref{fig6.5} investigate the influence of the
initial low-rank approximation error, the column-compression
tolerance, and the matrix-exponential tolerance, respectively.
Second-order convergence was recovered whenever these additional error
sources remained below the temporal discretization error, whereas the
overall accuracy became limited once the low-rank approximation or the
compression tolerances became dominant. These observations are fully
consistent with the error estimates established in
Theorems~\ref{thm3.3} and \ref{thm4.3}. The preservation of positivity
and trace by the low-rank forward propagator is confirmed
in Figure~\ref{fig6.6}.

\begin{figure}[h!]
{ \centering
\includegraphics[width=0.48\textwidth]{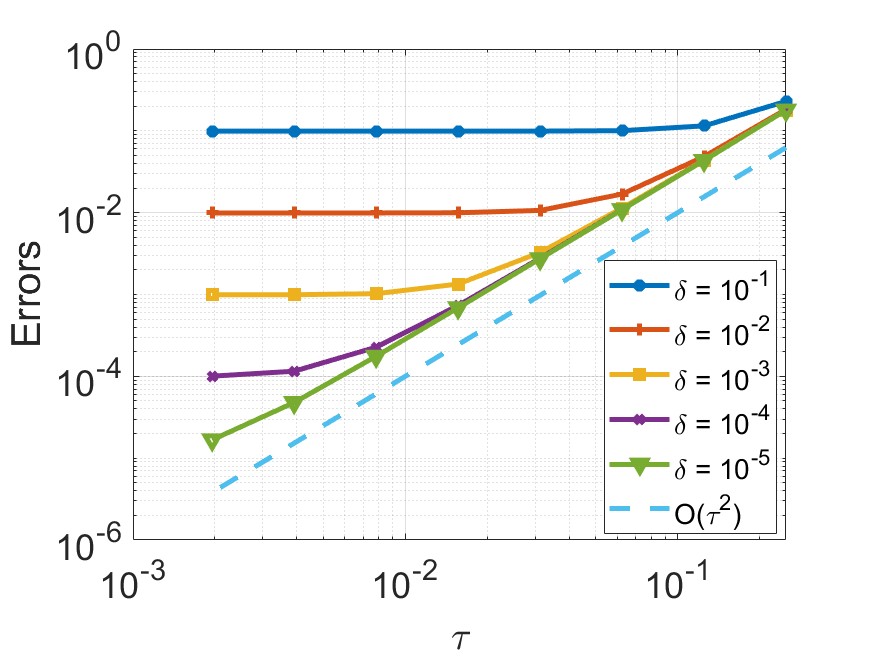}
}  { \centering
\includegraphics[width=0.48\textwidth]{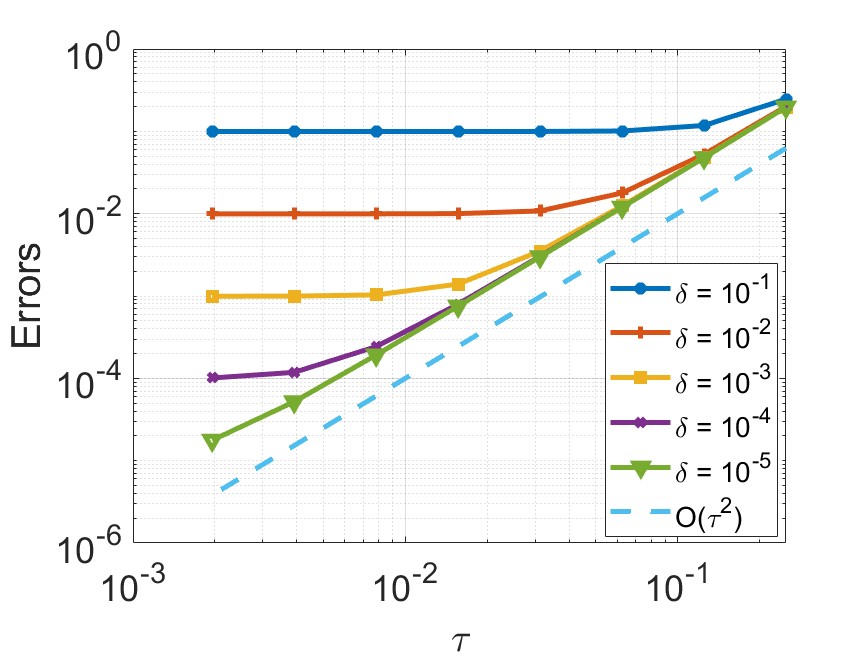}
} \caption{Numerical results of LREM schemes with fixed $\varepsilon_1=\varepsilon_2=10^{-10}$ and 
different $\delta$ for the Lindblad equations with $d=4$, $K=4$, $a=1.5$, $b=1$, $\gamma=0.05$, $T=1$, $u(t)=\sin(2\pi t)$. 
Left: errors vs step sizes for the forward Lindblad equation. Right: errors vs step sizes for the adjoint Lindblad equation.}
\label{fig6.3}
\end{figure}

\begin{figure}[h!]
{ \centering
\includegraphics[width=0.48\textwidth]{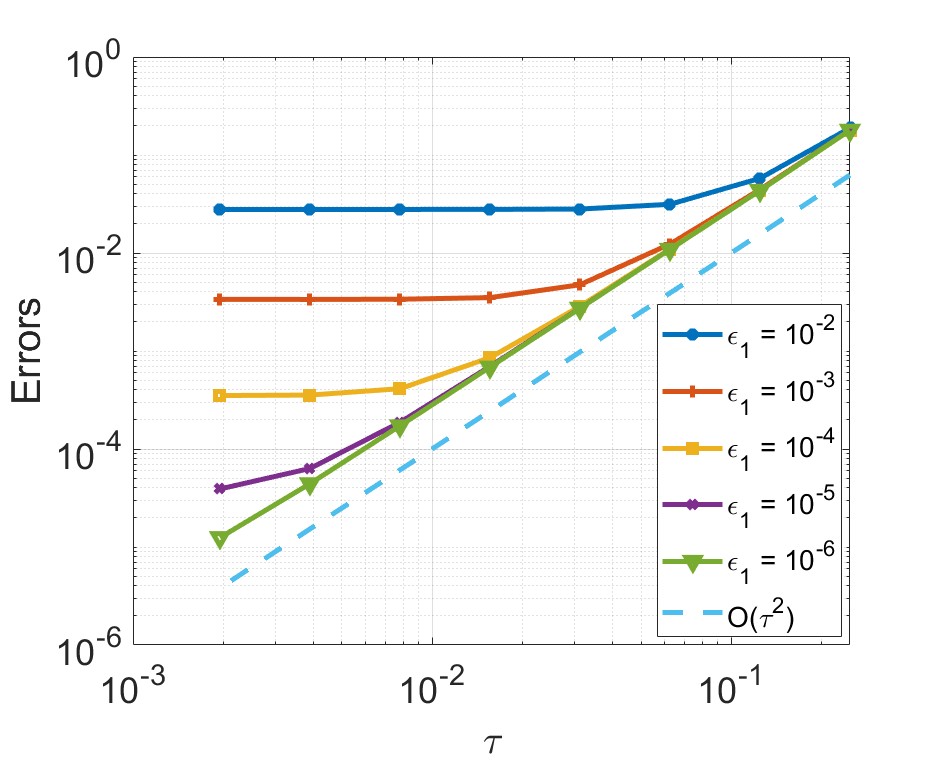}
}  { \centering
\includegraphics[width=0.48\textwidth]{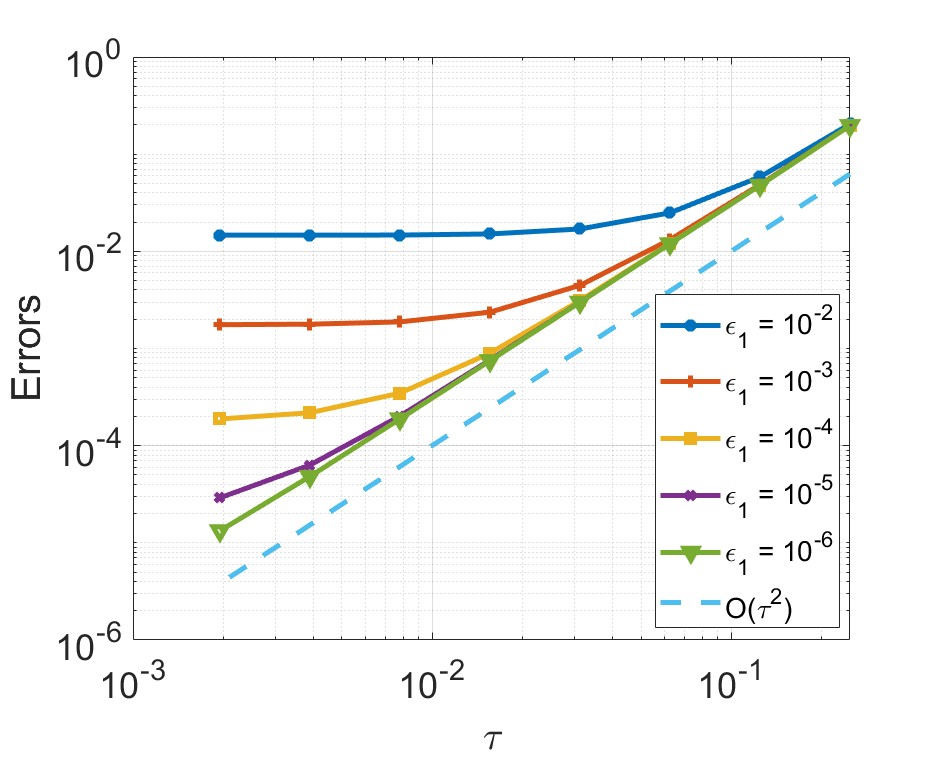}
} \caption{Numerical results of LREM schemes with fixed $\delta=\varepsilon_2=10^{-10}$ and 
different $\varepsilon_1=\tau\epsilon_1$ for the Lindblad equations with $d=4$, $K=4$, $a=1.5$, $b=1$, 
$\gamma=0.05$, $T=1$, $u(t)=\sin(2\pi t)$. Left: errors vs step sizes for the forward Lindblad equation. 
Right: errors vs step sizes for the adjoint Lindblad equation.}
\label{fig6.4}
\end{figure}

\begin{figure}[h!]
{ \centering
\includegraphics[width=0.48\textwidth]{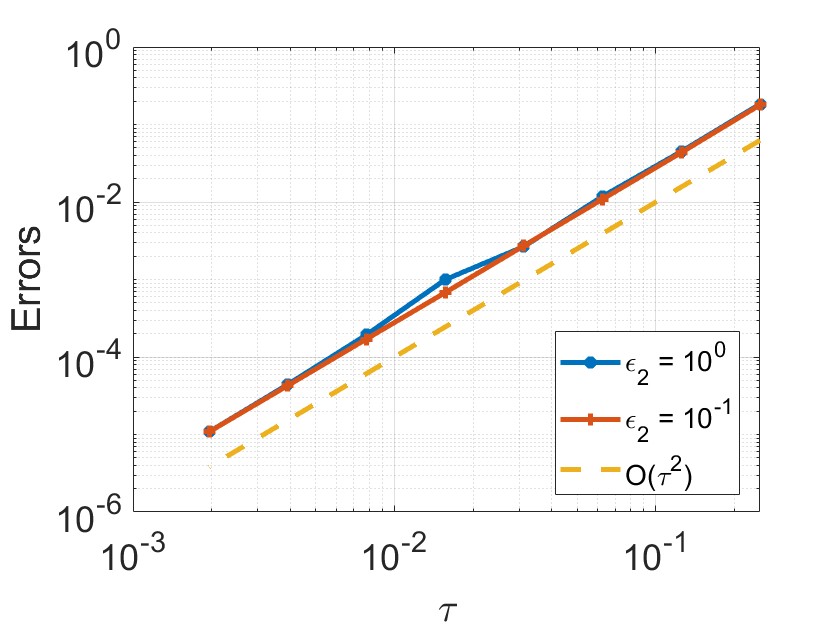}
}  { \centering
\includegraphics[width=0.48\textwidth]{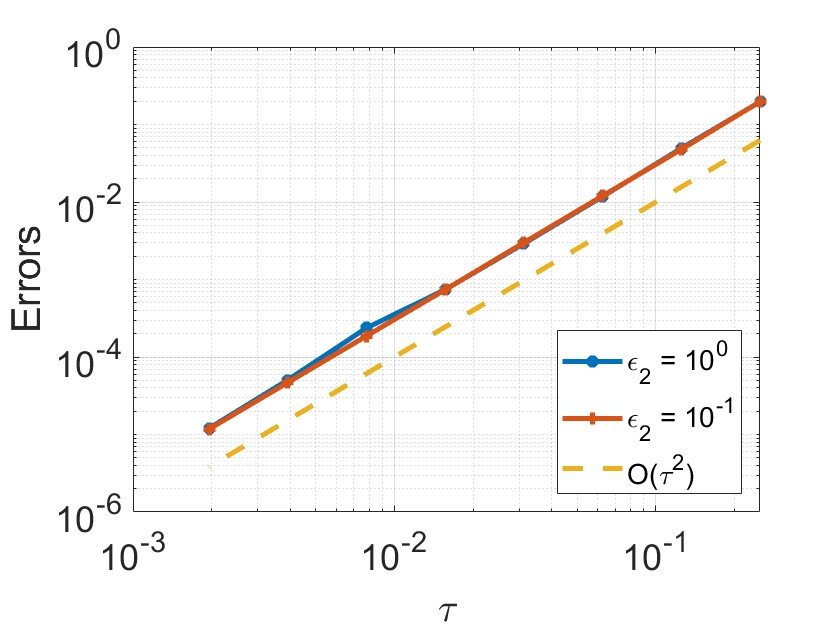}
} \caption{Numerical results of LREM schemes with fixed $\delta=\varepsilon_1=10^{-10}$ and 
different $\varepsilon_2=\tau\epsilon_2$ for the Lindblad equations with $d=4$, $K=4$, $a=1.5$, $b=1$, 
$\gamma=0.05$, $T=1$, $u(t)=\sin(2\pi t)$. Left: errors vs step sizes for the forward Lindblad equation. 
Right: errors vs step sizes for the adjoint Lindblad equation.}
\label{fig6.5}
\end{figure}

\begin{figure}[h!]
{ \centering
\includegraphics[width=0.48\textwidth]{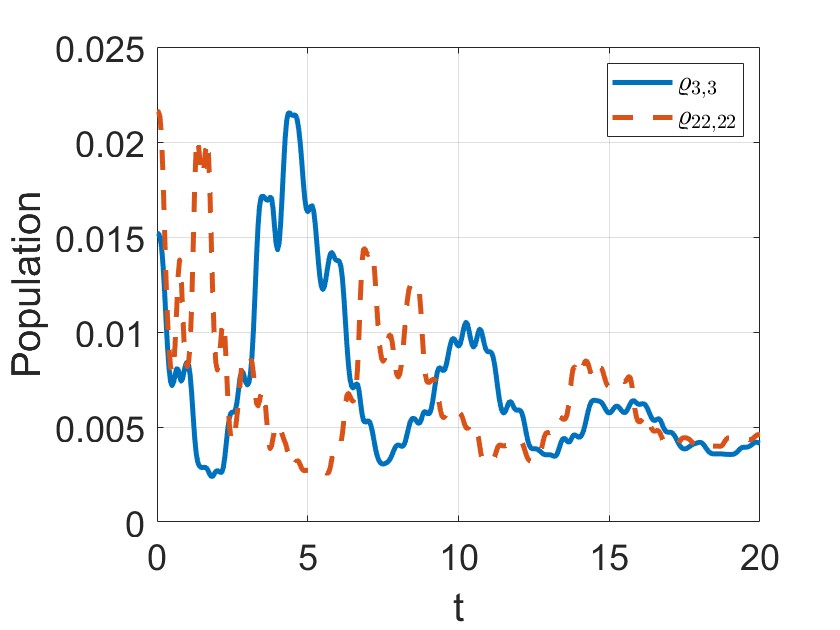}
}  { \centering
\includegraphics[width=0.48\textwidth]{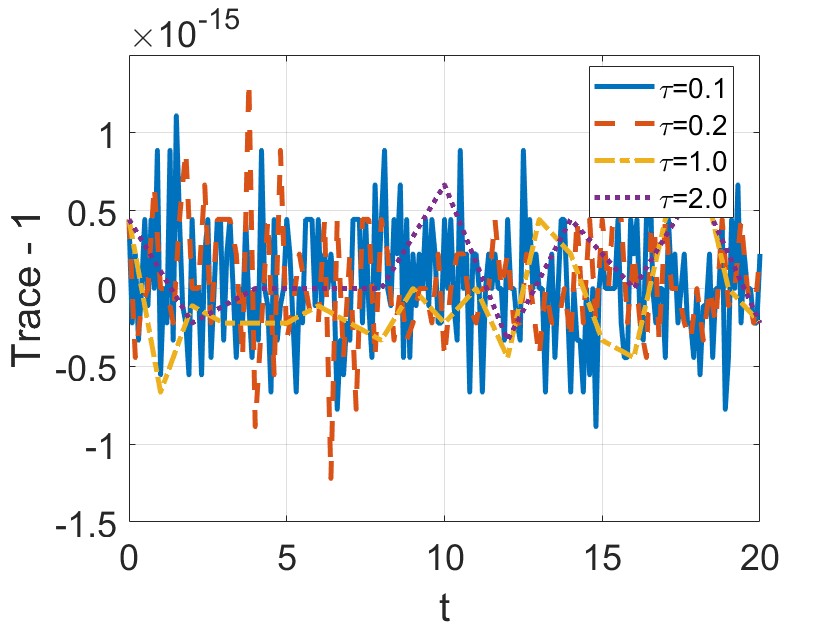}
} \caption{Numerical results of LREM scheme for forward Lindblad equation with $d=4$, $K=4$, $a=1.5$, $b=1$, $\gamma=0.05$, 
$T=20$, $u(t)=\sin(2\pi t)$. Left: evolutions of the populations $\varrho_{3,3}$ and $\varrho_{22,22}$ with $\tau=0.05$. 
Right: evolution of $\Tr(\varrho_n)-1$.}
\label{fig6.6}
\end{figure}

Finally, the proposed propagators were compared with the QuTiP solver
\emph{mesolve} \cite{Johansson}, using the Dormand-Prince
\emph{dop853} integrator applied to the vectorized Lindblad equation.
Figures~\ref{fig6.7} and \ref{fig6.8} summarize the resulting
computational times and approximation errors for the forward and
adjoint equations. For comparable levels of accuracy, the proposed
low-rank propagators substantially reduced the computational cost,
particularly as the dimension of the underlying Hilbert space
increased.

\begin{figure}[h!]
{ \centering
\includegraphics[width=0.48\textwidth]{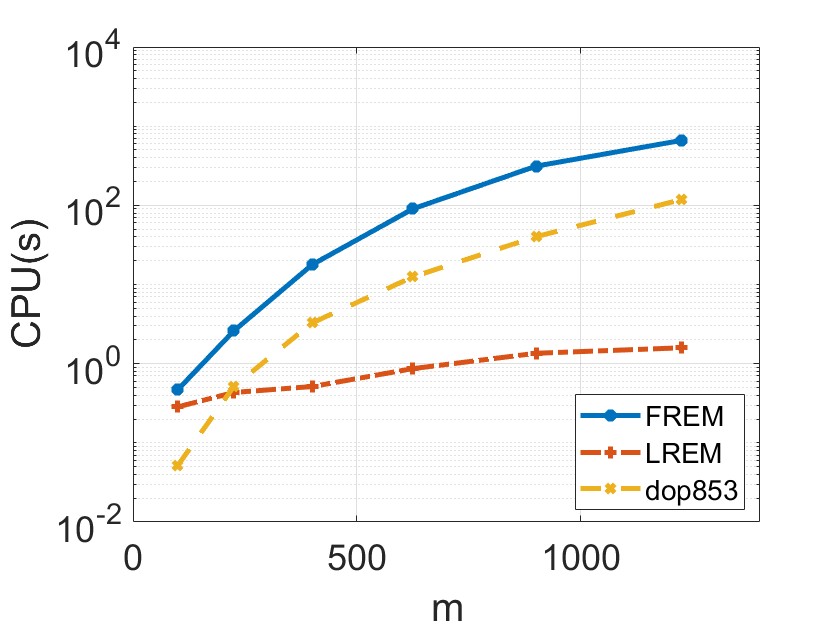}
} { \centering
\includegraphics[width=0.48\textwidth]{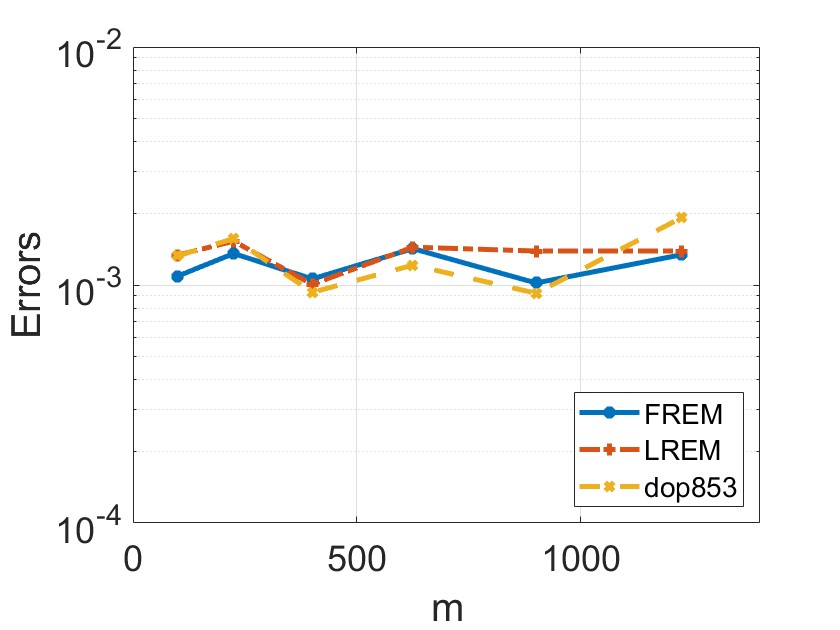}
} \caption{Numerical comparison between the proposed exponential schemes and the QuTip solver 
for the forward Lindblad equation with $K=2$, $a=1.5$, $b=1$, $\gamma=0.05$, $T=1$, $u(t)=\sin(2\pi t)$. Left: CPU times vs $m$. Right: errors vs $m$.
}
\label{fig6.7}
\end{figure}

\begin{figure}[h!]
{ \centering
\includegraphics[width=0.48\textwidth]{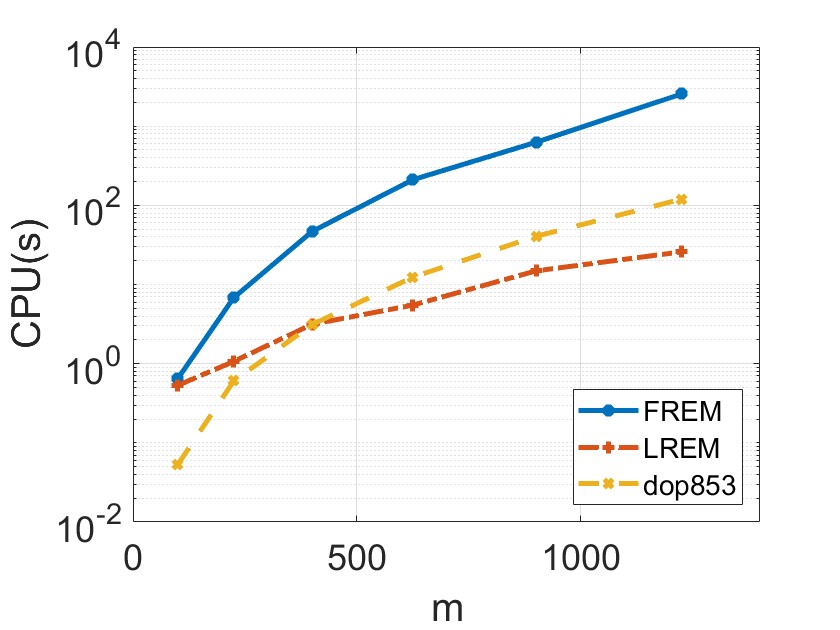}
} { \centering
\includegraphics[width=0.48\textwidth]{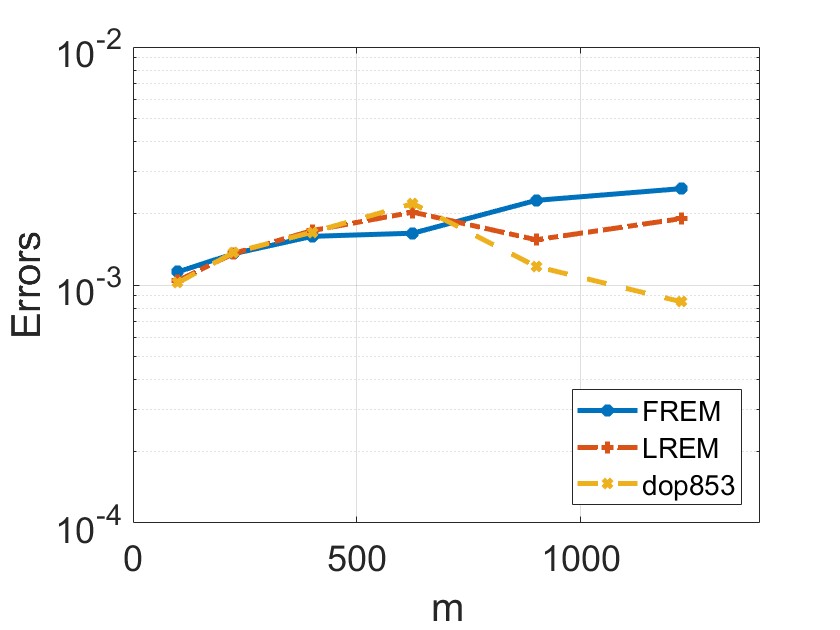}
} \caption{Numerical comparison between the proposed exponential schemes and the QuTip solver for the adjoint
Lindblad equation with $K=2$, $a=1.5$, $b=1$, $\gamma=0.05$, $T=1$, $u(t)=\sin(2\pi t)$. Left: CPU times vs $m$. Right: errors vs $m$.
}
\label{fig6.8}
\end{figure}

The preceding experiments validate the convergence theory and the
structure-preserving properties of the proposed propagators, thereby
providing the numerical foundation for their use within the optimal
control framework developed in the following subsection.

\subsection{Numerical results for optimal control}
\label{subsec:optimal-control-experiments}

The preceding experiments validated the convergence and
structure-preserving properties of the proposed propagators. Their
effectiveness is now assessed within the SQH framework for open quantum
optimal control problems. Throughout this subsection, the forward and
adjoint Lindblad equations are solved by the proposed full-rank
exponential midpoint (FREM) and low-rank exponential midpoint (LREM)
propagators, leading to the SQH-FREM and SQH-LREM algorithms,
respectively. For comparison, numerical results obtained with the GRAPE
algorithm \citep{Schulte} are also reported whenever applicable. 

The uncontrolled Hamiltonian and the control Hamiltonian are chosen as
\[
H_0
=
\sum_{k=1}^{K}
\left(
aJ_z^{(k)}
+
b(J_z^{(k)})^2
\right),
\qquad
H_1
=
\sum_{k<\ell}
J_x^{(k)}J_x^{(\ell)},
\]
with Lindblad operators
\[
L_k=J_z^{(k)},
\qquad
k=1,\ldots,K.
\]
Unless otherwise stated, the SQH parameters are
\[
\zeta=0.9,\qquad
\sigma=1.1,\qquad
\eta=10^{-5},\qquad
\kappa=10^{-4},
\]
with initial value \(\epsilon=1\) and a randomly generated initial
control.

\paragraph{Example 1 (smooth control).}

The first experiment considers the smooth running cost
\begin{equation*}
J(u) = \frac12 \|\rho(T)-\rho_T\|_F^2 + \frac{\alpha}{2} \int_0^T u(t)^2\,dt
\end{equation*}
with \(U=\mathbb{R}\). Two coupled four-level qudits are
considered, corresponding to \(K=2\), \(d=4\), and
\(m=d^K=16\). The remaining parameters are
\[
T=2,\qquad
a=1.5,\qquad
b=1,\qquad
\alpha=10^{-3},\qquad
\gamma_k=0.05,
\]
while the initial and target states are
\[
\rho_0=|00\rangle\langle00|,
\qquad
\rho_T=|11\rangle\langle11|.
\]

Figures~\ref{fig1} and \ref{fig2} compare SQH-FREM, SQH-LREM, and
GRAPE for this problem. Both SQH implementations produced comparable
controls and population transfers while exhibiting monotone decrease of
the objective functional. For the tested discretizations and system
dimensions, SQH-FREM and SQH-LREM required less computational time
than GRAPE, whereas the low-rank implementation provided an additional
reduction in CPU time compared with the full-rank realization.

\begin{figure}[h!]
{ \centering
\includegraphics[width=0.315\textwidth]{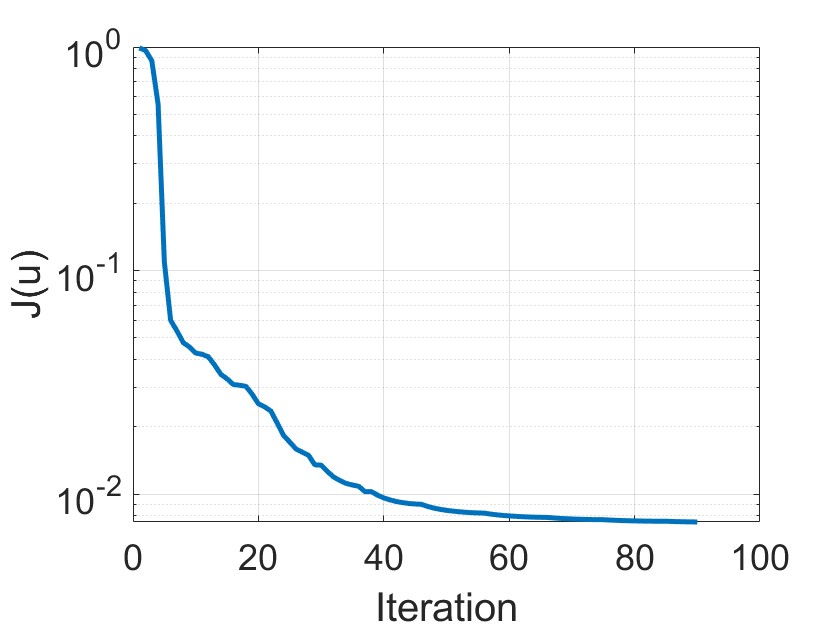}
} { \centering
\includegraphics[width=0.315\textwidth]{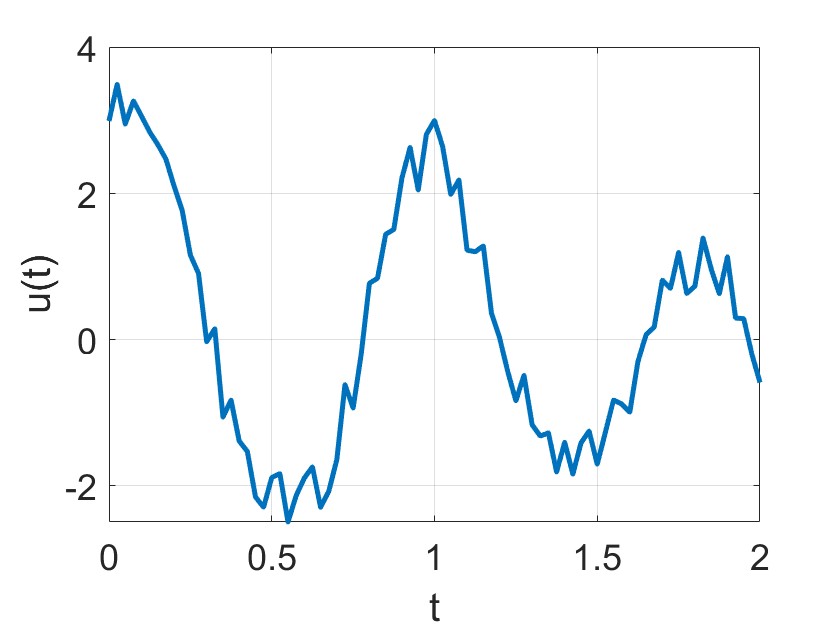}
} { \centering
\includegraphics[width=0.315\textwidth]{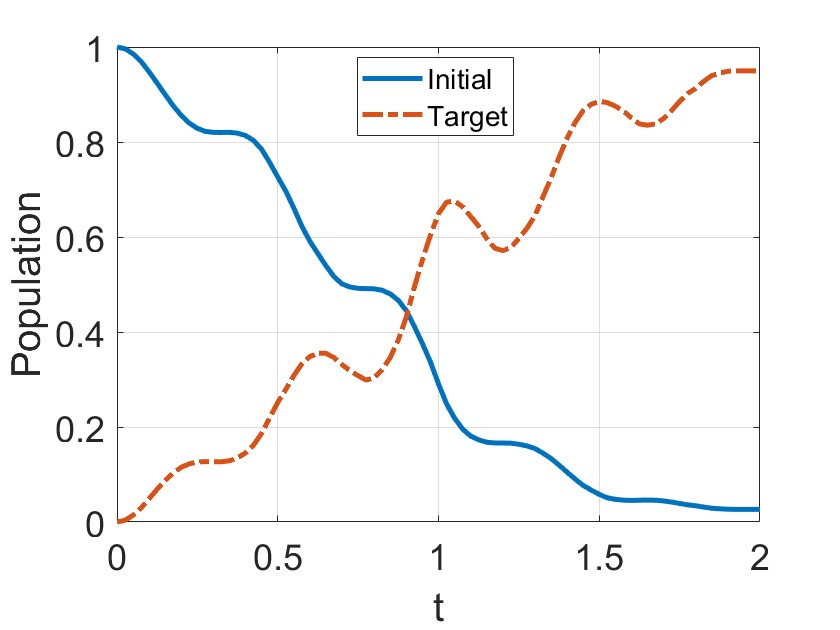}
} \\{ \centering
\includegraphics[width=0.315\textwidth]{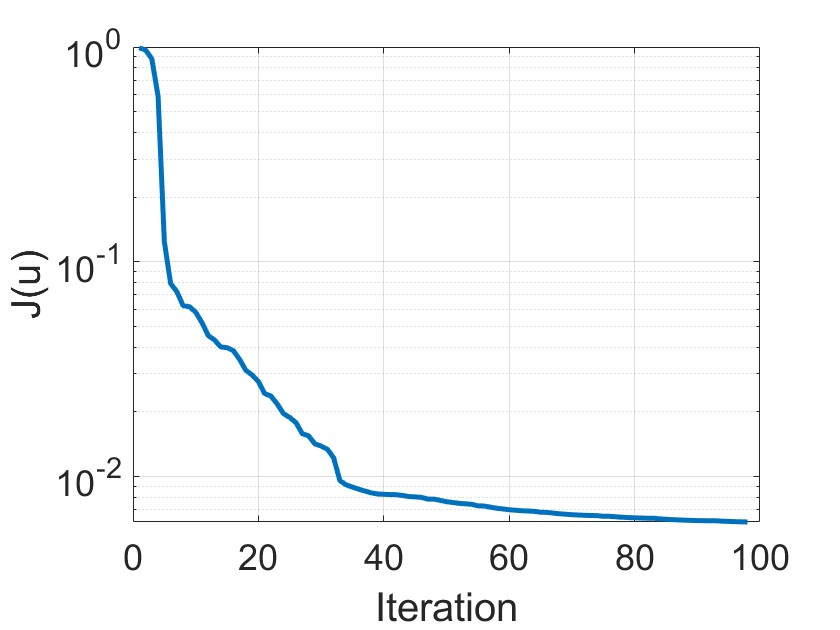}
} { \centering
\includegraphics[width=0.315\textwidth]{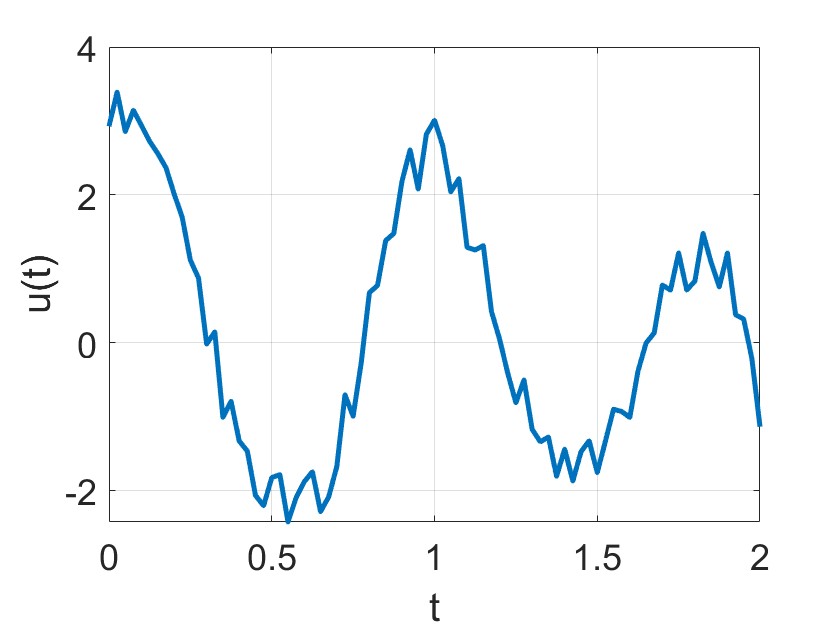}
} { \centering
\includegraphics[width=0.315\textwidth]{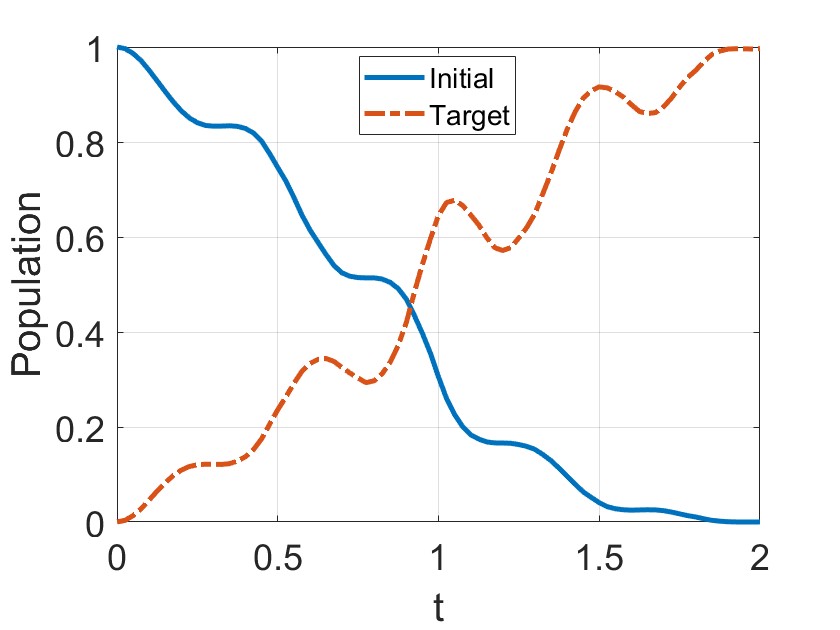}
} \\{ \centering
\includegraphics[width=0.315\textwidth]{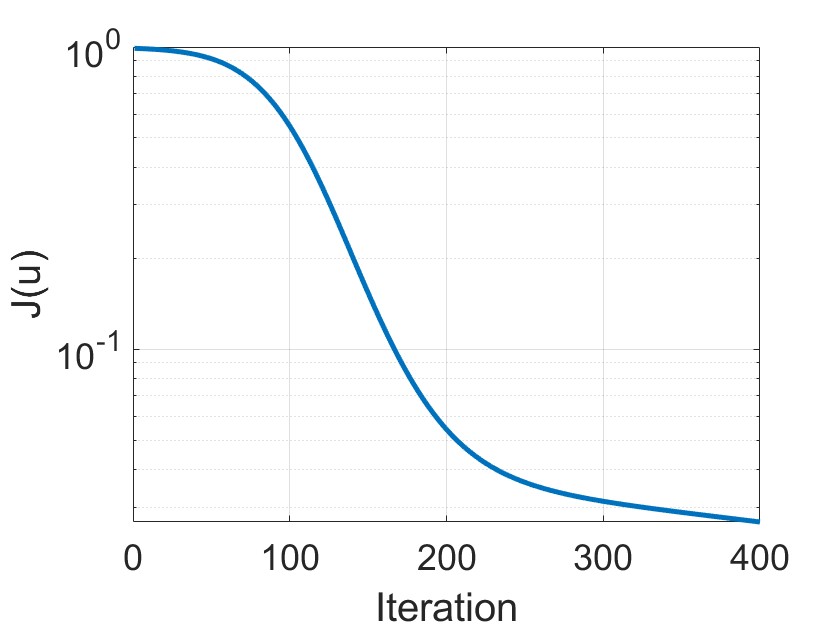}
} { \centering
\includegraphics[width=0.315\textwidth]{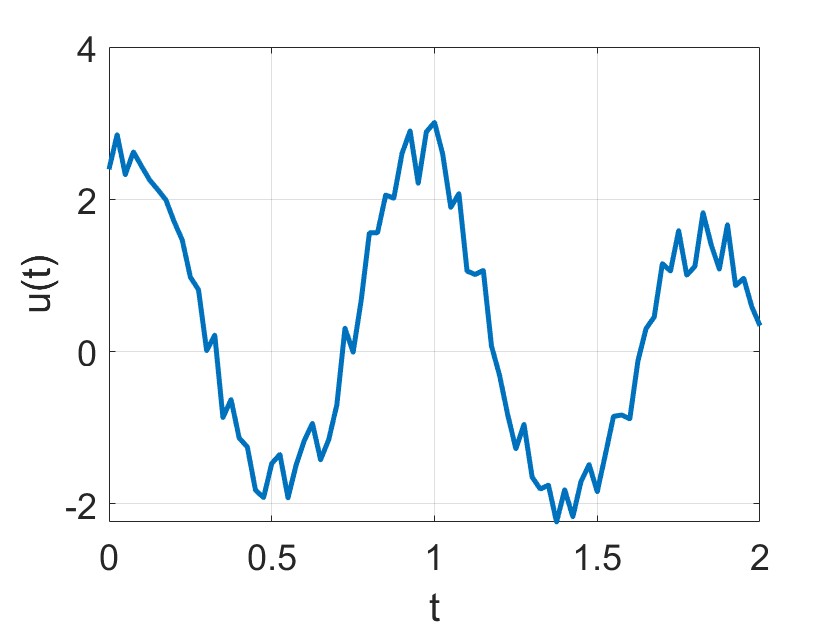}
} { \centering
\includegraphics[width=0.315\textwidth]{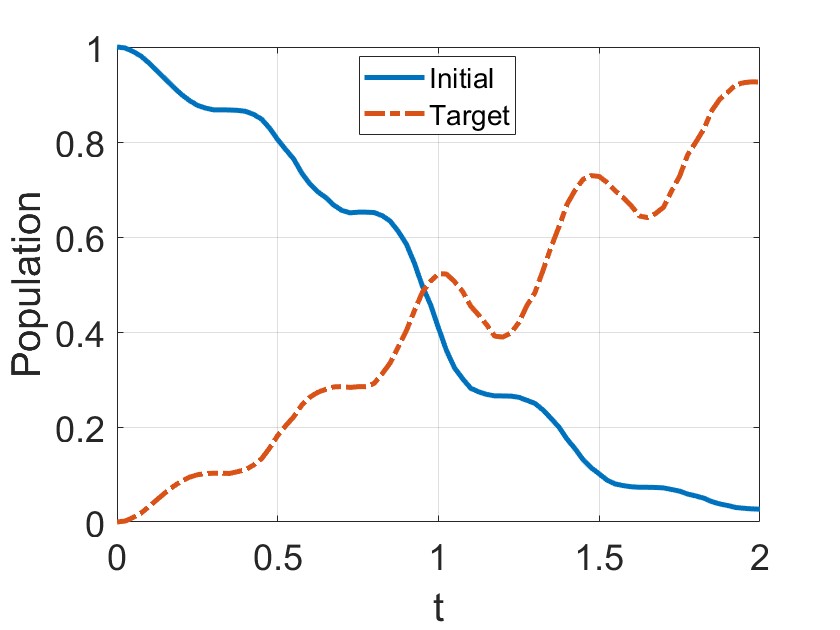}
} 
\caption{Smooth control problem. From left to right: objective history, optimized control,
and population transfer. From top to bottom: SQH-FREM, SQH-LREM, and GRAPE
(\(\tau=0.025\)).}
\label{fig1}
\end{figure}

\begin{figure}[h!]
{ \centering
\includegraphics[width=0.48\textwidth]{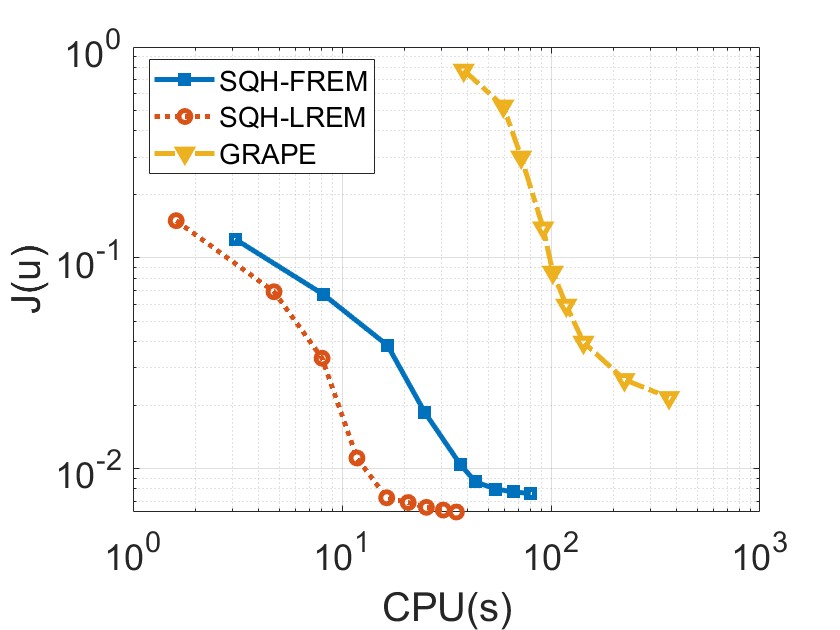}
} { \centering
\includegraphics[width=0.48\textwidth]{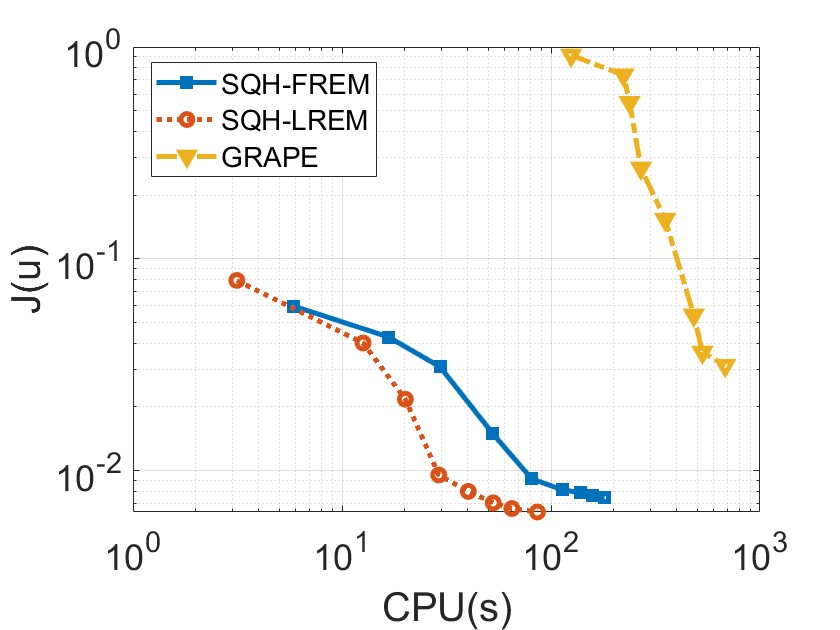}
} \caption{Numerical comparison of SQH and GRAPE methods for Example 1. From left to right: $\tau=0.05$ and $\tau=0.025$.
}
\label{fig2}
\end{figure}

\paragraph{Example 2 (nonsmooth and nonconvex control).}

The second experiment considers the discontinuous running cost
\begin{equation*}
J(u) =-\operatorname{Tr}(Q\rho(T)) + \int_0^T \ell(u(t))\,dt ,
\end{equation*}
where
\[
\ell(u)=
\begin{cases}
\dfrac{\alpha}{2}u^2+\beta|u|,
&
|u|>s,
\\[2mm]
\dfrac{\alpha}{2}u^2,
&
|u|\le s,
\end{cases}
\]
 is both nonsmooth and nonconvex. The physical model is the same
as in Example~1, with
\[
T=2,\qquad
a=1.5,\qquad
b=1,\qquad
\gamma_k=0.05,
\]
and
\[
\rho_0=|00\rangle\langle00|,
\qquad
Q=|11\rangle\langle11|.
\]

Since GRAPE relies on differentiability of the running cost, only the
SQH algorithms are considered.

Figures~\ref{fig3} and \ref{fig4} report two representative parameter
regimes. In both cases, the SQH iteration generated monotone decrease
of the objective functional together with controls that successfully
steered the population toward the prescribed target state. These
experiments demonstrate that the proposed computational framework
remains effective for nonsmooth and nonconvex optimal control problems,
for which conventional gradient-based approaches are generally not
applicable.

\begin{figure}[h!]
{ \centering
\includegraphics[width=0.315\textwidth]{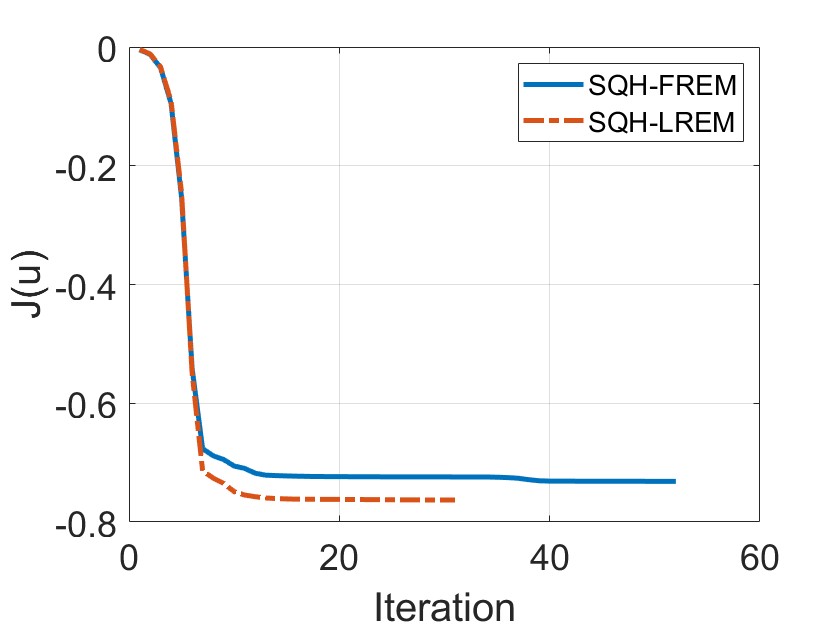}
} { \centering
\includegraphics[width=0.315\textwidth]{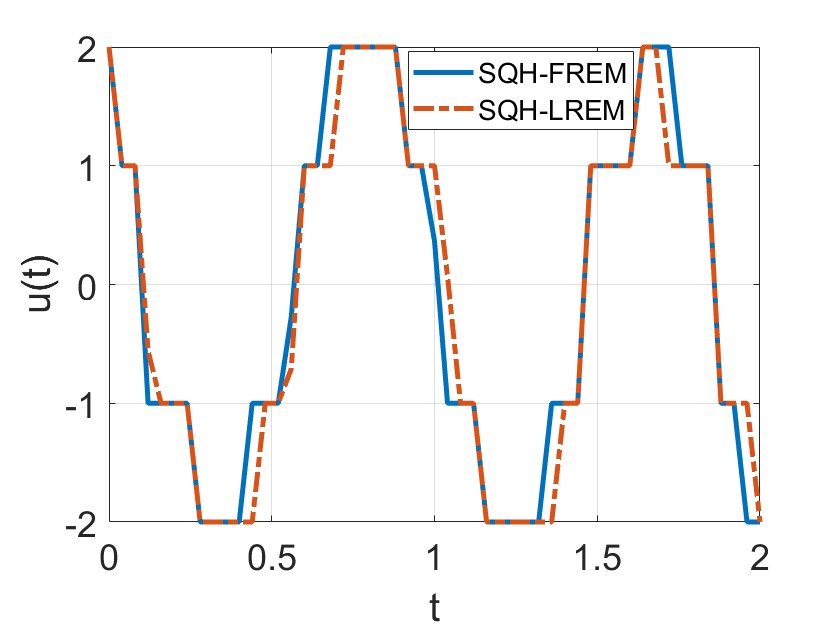}
} { \centering
\includegraphics[width=0.315\textwidth]{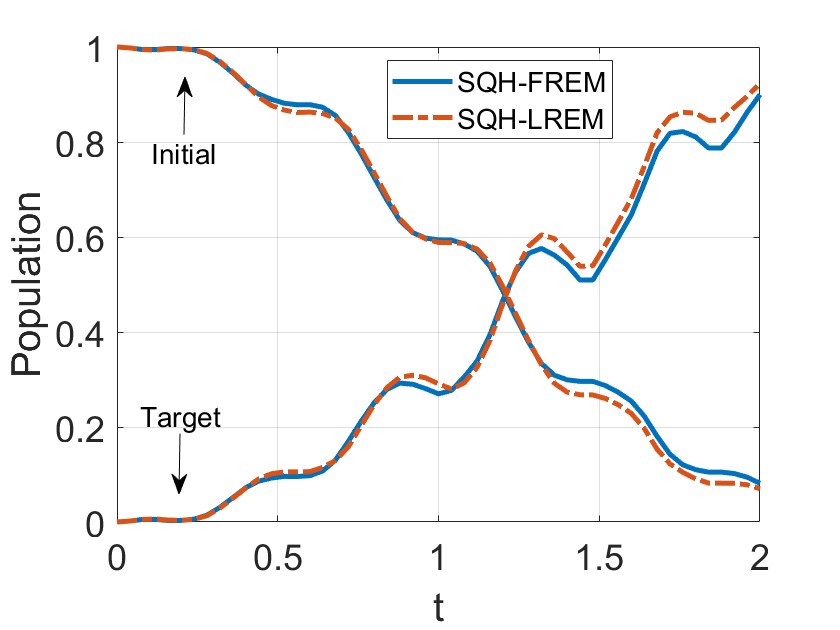}
} \caption{Numerical results of SQH methods for Example 2 with $\alpha=10^{-4}$, $\beta=10^{-1}$, $s=1$, $\tau=0.04$ and $U=[-2,2]$. From left to right: cost functional, optimized control, population transfer.
}
\label{fig3}
\end{figure}

\begin{figure}[h!]
{ \centering
\includegraphics[width=0.315\textwidth]{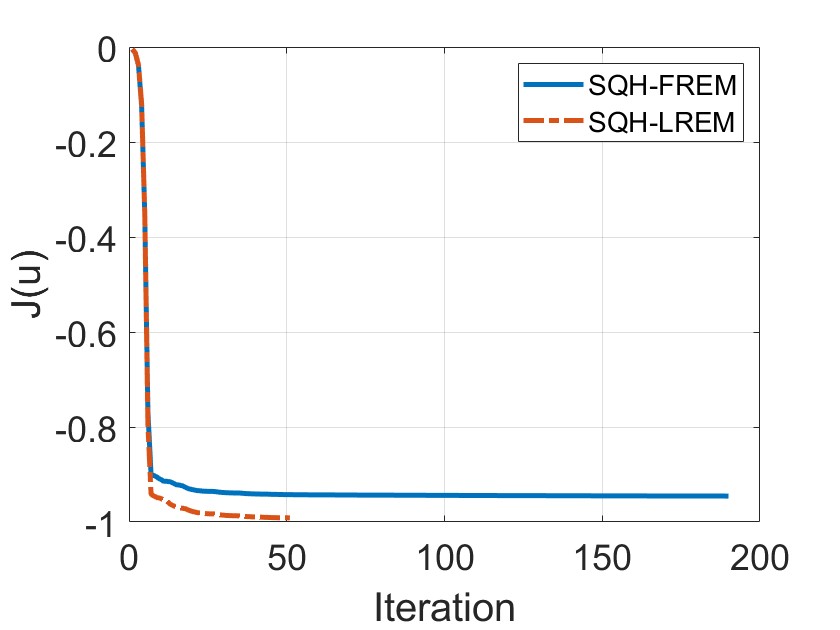}
} { \centering
\includegraphics[width=0.315\textwidth]{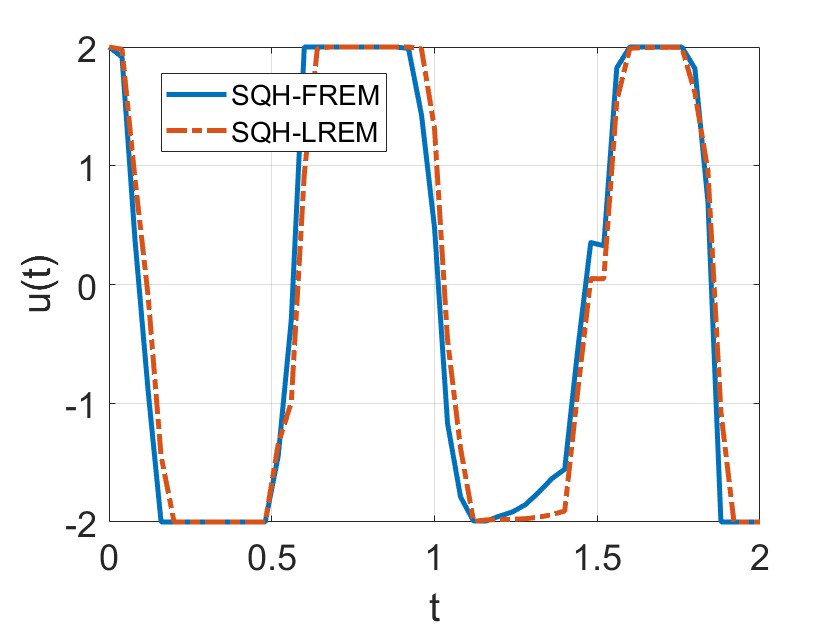}
} { \centering
\includegraphics[width=0.315\textwidth]{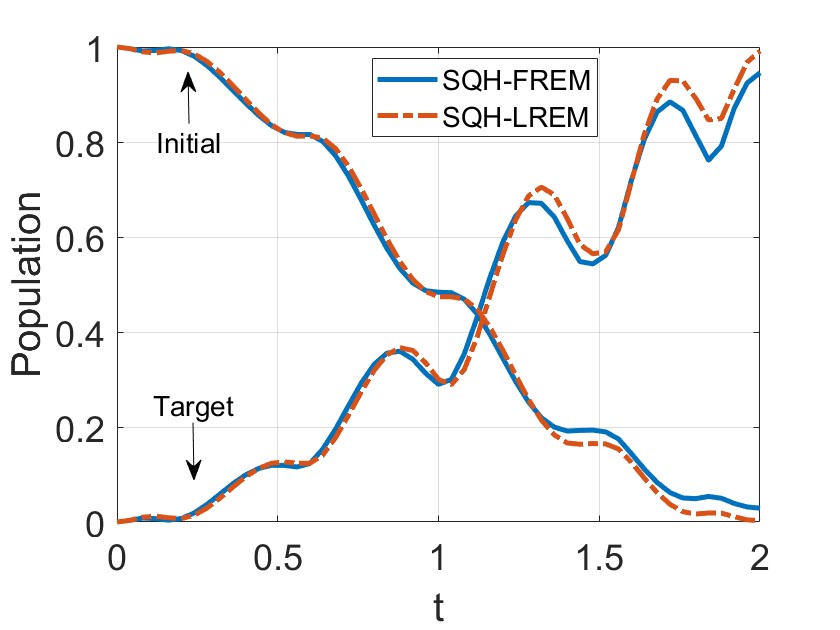}
} \caption{
Numerical results of SQH methods for Example 2 with $\alpha=10^{-4}$, $\beta=1$, $s=2$, $\tau=0.04$ and $U=[-20,20]$. From left to right: cost functional, optimized control, population transfer.
}
\label{fig4}
\end{figure}

\FloatBarrier

\section{Conclusions}\label{sec-conclusions}

A unified computational framework for the optimal control of open
quantum systems governed by Lindblad equations was presented. The
continuous optimality system was formulated by means of the Pontryagin
maximum principle and was solved by a sequential quadratic Hamiltonian
method. Structure-preserving full- and low-rank exponential midpoint
methods were developed for the forward and adjoint Lindblad equations,
thereby providing efficient computational kernels for the optimization
process. Rigorous error estimates were established for the full- and
low-rank propagators, and the resulting algorithms were shown to
preserve the fundamental physical structure of the density matrix.
Their effectiveness was demonstrated by numerical experiments, which
confirmed the predicted convergence behavior and illustrated the
computational advantages of the proposed approach for both smooth and
nonsmooth optimal control problems.

The proposed framework was intended to provide a foundation for the
large-scale numerical solution of open quantum optimal control
problems. Several directions for further investigation were left open,
including adaptive time-stepping strategies, improved low-rank
compression techniques, extension to more general classes of open
quantum systems, and the development of parallel implementations for
high-dimensional applications. The integration of structure-preserving
numerical methods with advanced optimization techniques was expected
to provide an effective basis for the computational treatment of
increasingly complex quantum control problems.

\section*{Acknowledgements}
The work of H. C. was supported by the Natural Science Foundation Project of Chongqing (No. CSTB2025NSCQ-GPX1015)

\bibliographystyle{cas-model2-names}

\bibliography{lindblad-refs}



\end{document}